\theoremstyle{plain}
\newtheorem{theorem}[subsubsection]{Theorem}
\newtheorem*{theorem*}{Theorem}
\newtheorem*{mainconclusions*}{Main Conclusions}
\newtheorem{proposition}[subsubsection]{Proposition}
\newtheorem*{proposition*}{Proposition}
\newtheorem*{lemma*}{Lemma}
\newtheorem{corollary}[subsubsection]{Corollary}
\newtheorem*{corollary*}{Corollary}
\theoremstyle{definition}
\theoremstyle{remark}
\newtheorem{remark}[subsubsection]{Remark}
\newtheorem{remarks}[subsubsection]{Remarks}
\newtheorem{example}[subsubsection]{Example}
\DeclareFontFamily{OT1}{rsfs}{}
\DeclareFontShape{OT1}{rsfs}{n}{it}{<-> rsfs10}{}
\DeclareMathAlphabet{\mathscr}{OT1}{rsfs}{n}{it}
\newcommand{\from}{\leftarrow}
\newcommand{\sgn}{\mathrm{sgn}}
\newcommand{\Ad}{\mathrm{Ad}}
\newcommand{\Res}{\mathrm{Res}}
\newcommand{\Z}{\mathbb{Z}}
\newcommand{\C}{\mathfrak{C}}
\newcommand{\ad}{{\operatorname{ad}}}
\newcommand{\0}{{(0)}}
\newcommand{\CC}{\mathbb{C}}
\renewcommand{\P}{\mathbb{P}}
\newcommand{\RR}{\mathbb{R}}
\newcommand{\Rplus}{{\RR^\times_+}}
\newcommand{\Hom}{\operatorname{Hom}}
\newcommand{\End}{\operatorname{End}}
\newcommand{\Gm}{\mathbb{G}_m}
\newcommand{\Ga}{\mathbb{G}_a}
\newcommand{\GL}{\operatorname{GL}}
\newcommand{\Mat}{\operatorname{Mat}}
\newcommand{\Sym}{\operatorname{Sym}}
\newcommand{\PGL}{\operatorname{PGL}}
\newcommand{\SL}{\operatorname{SL}}
\newcommand{\SO}{{\operatorname{SO}}}
\newcommand{\tr}{\operatorname{tr}}
\newcommand{\spec}{\operatorname{spec}}
\newcommand{\Vol}{\operatorname{Vol}}
\newcommand{\diag}{{\operatorname{diag}}}
\newcommand{\adiag}{{\operatorname{adiag}}}
\newcommand{\Id}{\operatorname{Id}}
\newcommand{\Std}{{\operatorname{Std}}}
\newcommand{\st}{{\operatorname{st}}}
\newcommand{\val}{{\operatorname{val}}}
\newcommand{\temp}{{\operatorname{temp}}}
\newcommand{\RTF}{{\operatorname{RTF}}}
\newcommand{\AvgVol}{{\operatorname{AvgVol}}}
\newcommand{\PW}{{\operatorname{PW}}}
\newcommand{\Meas}{{\operatorname{Meas}}}
\newcommand{\cl}{{\operatorname{cl}}}
\newcommand{\LX}{{^LX}}
\newcommand{\LG}{{^LG}}
\newcommand{\Dfrac}[2]{ 
  \ooalign{ 
    $\genfrac{}{}{2.0pt}0{\phantom{#1}}{\phantom{#2}}$\cr 
    $\color{white}\genfrac{}{}{1.2pt}0{\normalcolor{#1}}{\normalcolor{#2}}$} 
}
\begin{document}

\numberwithin{equation}{section}
\setcounter{tocdepth}{2}
\title[Transfer operators and Hankel transforms, I]{Transfer operators and Hankel transforms between relative trace formulas, I: Character theory}
\author{Yiannis Sakellaridis}
\email{sakellar@rutgers.edu}

\address{Department of Mathematics and Computer Science, Rutgers University at Newark, 101 Warren Street, Smith Hall 216, Newark, NJ 07102, USA.}

\address{School of Mathematics, Institute For Advanced Study, 1 Einstein Drive, Princeton, NJ 08540, USA.}

\subjclass[2010]{11F70}
\keywords{Relative trace formula, Langlands program, beyond endoscopy}


\begin{abstract}
The Langlands functoriality conjecture, as reformulated in the ``beyond endoscopy'' program, predicts comparisons between the (stable) trace formulas of different groups $G_1, G_2$ for every morphism $\LG_1\to \LG_2$ between their $L$-groups. This conjecture can be seen as a special case of a more general conjecture, which replaces reductive groups by spherical varieties and the trace formula by its generalization, the relative trace formula. 

The goal of this article and its continuation \cite{SaTransfer2} is to demonstrate, by example, the existence of ``transfer operators'' betweeen relative trace formulas, that generalize the scalar transfer factors of endoscopy. These transfer operators have all properties that one could expect from a trace formula comparison: matching, fundamental lemma for the Hecke algebra, transfer of (relative) characters. Most importantly, and quite surprisingly, they appear to be of abelian nature (at least, in the low-rank examples considered in this paper), even though they encompass functoriality relations of non-abelian harmonic analysis. Thus, they are amenable to application of the Poisson summation formula in order to perform the global comparison. Moreover, we show that these abelian transforms have some structure --- which presently escapes our understanding in its entirety --- as deformations of well-understood operators when the spaces under consideration are replaced by their ``asymptotic cones''. 

In this first paper we study (relative) characters for the Kunzetsov formula and the stable trace formula for $\SL_2$ and their degenerations (as well as for the relative trace formula for torus periods in $\PGL_2$), and we show how they correspond to each other under explicit transfer operators. \end{abstract}

\maketitle

\tableofcontents

\section{Introduction}

\subsection{Functoriality}

The goal of this paper and its continuation,\footnote{Any references to sections or equations numbered 6 or above refer to \cite{SaTransfer2}.} \cite{SaTransfer2}, is to demonstrate a local theory ``beyond endoscopy'', which indicates that functorial transfer  between trace formulas is governed by ``transfer operators'' that have a certain structure: First, in the low-rank cases that we are examining, these operators can be explicitly described in terms of Fourier transforms and other relatively innocuous operators (such as multiplication by an automorphic character) which, \emph{in principle}, are amenable to application of the Poisson summation formula and, hence, to a global comparison. Secondly, these transfer operators are deformations of completely understood transfer operators that one obtains when letting the spaces degenerate to their \emph{asymptotic cones} (or \emph{boundary degenerations}). In the process, we develop the local theory behind ``non-standard'' comparisons of trace formulas that have appeared in the literature, namely in Rudnick's and Venkatesh's theses \cite{Rudnick, Venkatesh}, revealing a structure that is not evident in the analytic number theory approach. Our results should also be related to Herman's trace-formula-theoretic proof of the functional equation of the standard $L$-function of $\GL_2$ \cite{Herman} (which should correspond to the local Godement--Jacquet theory on the Kuznetsov formula, developed by Jacquet in \cite{Jacquet}), and Altug's theory of Poisson summation for the stable trace formula of $\SL_2$ \cite{Altug1,Altug2,Altug3}. Most of the results of this paper were announced, without proofs, in \cite{SaHanoi}.

``Functoriality'', here, is understood in the generalized sense, in the tradition of Jacquet, that includes \emph{spherical varieties} in addition to reductive groups (which are a special case). Thus, the trace formulas compared are \emph{relative trace formulas}, which include the trace formula in the group case. With the exception of Venkatesh's thesis, that we revisit, the functorial lifts that underlie our comparisons correspond to an \emph{isomorphism of $L$-groups}. To the reader eager to see non-trivial cases of functoriality between reductive groups, this might seem, and is, a disappointment. However, the ``relative'' generalization demonstrates that the problem of functoriality is highly non-trivial already for the identity maps of $L$-groups. What hopes do we have to tackle the general problem if we don't understand this basic case? Moreover, the examples collected here demonstrate the central role played by the \emph{Kuznetsov formula} in any successful ``beyond endoscopy'' comparison that I know. This fact is reinforced by the upcoming paper \cite{SaRankone}, which will compare any relative trace formula of rank one to the Kuznetsov formula. Although it is too early to pass verdict on this, it may just be that the idea of directly comparing stable Arthur--Selberg trace formulas is infeasible, and one has to move to the ``relative'' setting of the Kuznetsov formula in order to prove the functoriality conjecture; this idea appeared early on in the short history of ``beyond endoscopy'', in a letter of Sarnak to Langlands \cite{Sarnak}.

Our methods of proof are quite classical, and rely heavily on Rankin--Selberg theory. Not surprisingly, in retrospect, several different methods to study the same problem converge to the same, when viewed as a comparison between the appropriate trace formulas. The hope is that this new trace formula-theoretic approach will generalize to cases where Rankin--Selberg theory and other techniques do not. Again, in rank one this will be confirmed in the upcoming paper \cite{SaRankone}.

\subsection{Overview}

The relative trace formula is a distribution (with a geometric and a spectral expansion) on the adelic points of a stack. In this paper we work over a local field $F$, and only with relative trace formulas attached to quotients of the form $[X\times X/ G^\diag]$, where $X$ is a homogeneous spherical $G$-variety of low rank. We will allow the case $X= (N,\psi)\backslash G$, that is, the Whittaker case of the variety $N\backslash G$, equipped with a generic character of the maximal unipotent subgroup $N$, except that in that case we will take the character to be $\psi^{-1}$ on the second copy of $X$. The Arthur--Selberg trace formula corresponds to the case $X=H$, a reductive group, under the action of $G=H\times H$ by left and right multiplication; in that case, $[X\times X/G^\diag] = [\frac{H}{H}]$, by which we denote the quotient of $H$ by $H$-conjugacy. In the general case where $X=H\backslash G$, we have an isomorphism $[X\times X/G^\diag]=[H\backslash G/H]$, and we will use these two ways to represent this quotient interchangeably.

In the entire paper we work over a local field $F$. The variety $X$ is assumed to be quasi-affine, and let us denote by $\C_X$ the invariant-theoretic quotient $X\times X\sslash G = \spec F[X\times X]^G$ (from now on, $G$ will always be understood to be acting diagonally on such a quotient). The space $$ \mathcal S(X\times X/G)$$
of \emph{stable test measures} for the associated relative trace formula is the push-forward of the Schwartz space of measures on $X\times X$ to $\C_X$. (In the Whittaker case, we need to define a ``twisted push-forward''; see \S \ref{sstwistedpf}.)

There is a notion of \emph{stable relative character} $J_\Pi^X$ attached to an ($L$-packet $\Pi$ of) irreducible representation(s) of $G$, generalizing the stable character of a reductive group; it is a functional on $\mathcal S(X\times X/G)$. If there is a non-zero such relative character, we say that $\Pi$ is \emph{$X$-distinguished}. A morphism of $L$-groups
$$ \LX_1\to \LX_2$$
of two spherical varieties $X_1, X_2$ should, according to the relative (local) functoriality conjecture, induce a map
$$ \{X_1\mbox{-distinguished $L$-packets}\} \longrightarrow \{X_2\mbox{-distinguished $L$-packets}\},$$
at least for those $L$-packets that participate in the corresponding Plancherel formulas (the \emph{$X_i$-tempered} ones). A basic proposition of ``beyond endoscopy'' is that the resulting map of stable relative characters
$$ J_{\Pi_1}^{X_1} \mapsto J_{\Pi_2}^{X_2}$$
should be realized as the adjoint of a ``transfer operator'' between test measures:
$$ \mathcal T: \mathcal S(X_2\times X_2/G_2) \to \mathcal S(X_1\times X_1/G_1).$$
In the group case, this operator has been studied by Langlands \cite{Langlands-ST} and Johnstone \cite{Johnstone} when $X_1$ is a torus and $X_2$ is $\GL_n$. Ideally, this operator should be used in a global comparison of trace formulas, to establish the corresponding functorial transfer of \emph{automorphic} representations, but it is not clear at this moment how to do that in the context of the Arthur--Selberg trace formula.

The goal of this paper is to study such transfer operators $\mathcal T$, and to prove that they have some structure. In our comparisons, $X_2$ will always be the Whittaker model for a group $G^*$ (so that $\LX_2 = \LG^*$, and $\mathcal S(X_2\times X_2/G^*) = \mathcal S(N,\psi\backslash G^*/N,\psi)$ is the space of test measures for the Kuznetsov formula), and $X_1$ will be a different spherical variety $X$ for a group $G$. In all cases but one (the study of Venkatesh's thesis in Section \ref{sec:Venkatesh}), we will have $\LX = \LG^*$. The transfer operator that we construct originates from an \emph{enlargement} $\mathcal S^-_{L_X}(N,\psi\backslash G^*/N,\psi)$ of the space of test measures for the Kuznetsov formula. This is to be expected, because in a global comparison where the following diagram would commute:
\begin{equation}\label{commRTF} \xymatrix{ \mathcal S^-_{L_X}(N,\psi\backslash G^*/N,\psi) \ar[rr]^{\mathcal T}\ar[dr]_{\RTF} & & \ar[dl]^{\RTF}\mathcal S(X\times X/G)\\
& \CC &}\end{equation}
(where $\RTF$ denotes the relative trace formula functional), the spectral side of the relative trace formula for $X$ is, roughly (and conjecturally, cf.\ \cite[\S 17]{SV}), an integral over $X$-distinguished automorphic representations $\pi$ of certain quotients of $L$-values of the form $\frac{L_X(\sigma_\pi)}{L(\sigma_\pi,\Ad, 1)}$, where $\sigma_\pi$ is an automorphic representation of $G^*$ of which $\pi$ is a lift, and $L_X$ is a certain $L$-value of $\sigma_\pi$, while the corresponding term for the Kuznetsov formula with \emph{standard} test functions is just $\frac{1}{L(\sigma_\pi,\Ad, 1)}$ --- thus, the factor $L_X(\sigma_\pi)$ has to be inserted. The enlargement of the standard space of test measures should also be expected for representation-theoretic reasons (both global and local), because the spectrum of the space $X$ is typically larger, containing, for example, the trivial representation, which is not present on the Kuznetsov side with usual test functions, but is added in this larger space. (This feature already appeared in \cite{SaBE1, SaBE2}, where the first comparison of this type was performed, both locally and globally.)  This enlargement of the Schwartz space (see \S \ref{ssnonstandard}) will be done in a somewhat ad-hoc way; I do not yet know a good way\footnote{Although Rapha\"el Beuzart-Plessis has informed me that he does!} to characterize the enlarged space $\mathcal S^-_{L_X}(N,\psi\backslash G^*/N,\psi)$ for any $L$-value $L_X$, but at least, in the non-Archimedean case, it should contain the image of the ``generating Whittaker measure of the local $L$-value $L_X$'', that measure (or function) whose Poincar\'e series, at the unramified level, extracts the desired $L$-value from each automorphic representation (at least, formally). In approaches to ``beyond endoscopy'' expressed in classical language, this non-standard Poincar\'e series corresponds to ``a series of Kuznetsov formulas'' with varying test functions, closely modelling the Dirichlet series of the desired $L$-value $L_X$.

We may also let our varieties degenerate to their \emph{asymptotic cones} (or boundary degenerations). For the space of Whittaker measures $\mathcal S(N,\psi\backslash G^*)$ this means letting the character $\psi$ become trivial. For the spherical variety $X$, the asymptotic cone $X_\emptyset$ is obtained by considering a graded version of its coordinate ring; for example, when $X=\SL_2$, we have $X_\emptyset = $ the variety of $2\times 2$-matrices of rank one. We will explain that, in this case, there is a very natural transfer operator
$$ \xymatrix{ \mathcal S^\pm_{L_X}(N\backslash G^*/N) \ar[rr]^{\mathcal T_\emptyset} & & \mathcal S^\pm(X\times X/G)}$$
between suitable spaces of test measures that we denote by $\mathcal S^\pm$, given as a composition of \emph{multiplicative Fourier convolutions} $\mathscr F_{\check\lambda, s}$:
\begin{equation}\label{Fourierconv} \mathscr F_{\check\lambda, s}f(\xi):= \int_{\Gm} f( \check\lambda(x)^{-1} \xi) \psi(x) |x|^s d^\times x.\end{equation}
Here, $f$ is a measure on a ``universal Cartan'' $A_X = A_{G^*}$, and $\check\lambda$ is a cocharacter into that torus. Notice that we habitually denote the set of $F$-points of a variety $X$ simply by $X$, so an integral over $\Gm$ just means an integral over $F^\times$.

To summarize, we have a good understanding of ``degenerate'' transfer operators $\mathcal T_\emptyset$, and we would like to study the transfer operators $\mathcal T$ of the original problem. The findings of this paper can be summarized as follows:

\begin{mainconclusions*}
\begin{enumerate}
 \item In a range of examples, the transfer operators $\mathcal T$ can be calculated explicitly, and are linear isomorphisms
 $$\mathcal T:\xymatrix{ \mathcal S^-_{L_X}(N,\psi\backslash G^*/N,\psi) \ar[rr]^{\sim} & & \mathcal S(X\times X/G)}$$
 satisfying the fundamental lemma (i.e., sending the ``basic vector'' of one space to the ``basic vector'' of the other, and also the ``fundamental lemma for the Hecke algebra'').
 \item The transfer operators computed are indeed the ``correct'' ones for functoriality: the pullbacks of relative characters for $X\times X/G$ are relative characters for the Kuznetsov trace formula of $G^*$.
 \item They are \emph{deformations} of the degenerate transfer operators $\mathcal T_\emptyset$, in the sense that they are given by similar-looking formulas.
 \item They are \emph{amenable to a global Poisson summation formula}, in the sense that they are given by multiplicative Fourier convolutions and multiplication by factors that are trivial on global rational points, thus they seem to fit in a commutative diagram like \eqref{commRTF}. 
\end{enumerate}
\end{mainconclusions*}

I do not perform such a global comparison here, but I expect that the methods employed in \cite{SaBE2} would apply to all cases discussed here.

The above conclusions are not restricted to the transfer operators that should be responsible for the functoriality map corresponding to 
$$ \LX_2\to \LX_1,$$
but also to the so-called \emph{Hankel transforms} that are responsible for the functional equation of certain $L$-functions. These are addressed in the continuation to this paper, \cite{SaTransfer2}. While transfer operators have the property that they pull back relative characters to relative characters normalized in a distinguished way:
\begin{equation}
 \mathcal T^* J_{\Pi_1}^{X_1}  = J_{\Pi_2}^{X_2},
\end{equation}
the Hankel transforms are transforms between enlarged spaces of test measures of the \emph{same} relative trace formula (for us, always the Kuznetsov formula), but with the enlargement corresponding to dual $L$-functions:
 $$\mathcal H_r:\xymatrix{ \mathcal S^-_{L(r)}(N,\psi\backslash G/N,\psi) \ar[rr]^{\sim} & & \mathcal S^-_{L(r^\vee)}(N,\psi\backslash G/N,\psi)}$$
and having the property that they act on relative characters \emph{by the gamma factor of the local functional equation}:
\begin{equation}
 \mathcal H_r^* J_{\Pi}  = \gamma(r) J_{\Pi}.
\end{equation}
For example, when $G= \GL_n$ and $r=\Std$, the standard representation of the dual group, the space $\mathcal S^-_{L(r)}(N,\psi\backslash G/N,\psi)$ is just the image of the Schwartz space $\mathcal S(\Mat_n)$ of the variety of $n\times n$-matrices, and the Hankel transform $\mathcal H_{\Std}$ is the descent of Fourier transform, computed by Jacquet in \cite{Jacquet}. In this paper, we will compute the Hankel operator $\mathcal H_{\Sym^2}$ for $r=$ the \emph{symmetric square $L$-function} of $\GL_2$, and verify that both $\mathcal H_{\Std}$ and $\mathcal H_{\Sym^2}$ satisfy the Main Conclusions listed above.

\subsection{Index of main results}

References to chapters \ref{sec:intro2} and above refer to the continuation of this paper, \cite{SaTransfer2}. The main results of this series of two papers are:

\begin{itemize}
 \item Theorem \ref{thmRudnick}, on the comparison between the Kuznetsov formula and the stable trace formula for $\SL_2$. It is complemented by Theorem \ref{groupdegen} which performs the same comparison for their asymptotic cones.
 \item Similarly, Theorem \ref{torusdegen} performs the asymptotic cone comparison for the transfer between the Kuznetsov formula for $\PGL_2$ and the relative trace formula for the quotient $\Gm\backslash \PGL_2/\Gm$. The actual transfer between these two quotients was performed earlier in \cite{SaBE1}.
 \item Theorem \ref{thmSym2}, on the Hankel transform responsible for the functional equation of the symmetric-square $L$-function of $\GL_2$ (or, more correctly, of $\Gm\times\PGL_2$). It is complemented by Theorems \ref{Sym2degen} and \ref{Stddegen} that compare this Hankel transform (and the one for the standard $L$-function) to the corresponding transforms on the asymptotic cone.
 \item Theorems \ref{transfertokappa} and \ref{twistedRudnick}, which describe the transfer from the Kuznetsov formula to the endoscopic parts of the trace formula for $\SL_2$, and Theorem \ref{thmVenkatesh}, which describes the transfer from the Kuznetsov formula of $\SL_2$ to the Schwartz space of a torus.
\end{itemize}

\subsection{Example: Comparison between the Kuznetsov formula and the stable trace formula for $\SL_2$.} \label{ExRudnick}

In Section \ref{sec:Rudnick} (with some proofs to be postponed until Section \ref{sec:sym2}) I develop the local theory behind Rudnick's thesis \cite{Rudnick} (and its potential generalization to non-holomorphic automorphic forms). Here, the goal is to obtain a transfer operator
$$\mathcal T: \mathcal S^-_{L_X}(N,\psi\backslash G/N,\psi) \xrightarrow\sim \mathcal S(\frac{G}{G})$$
between the spaces of test measures for the Kuznetsov formula and the stable trace formula for $G=\SL_2$. The operator should have the property that its adjoint takes the stable character $\Theta_\Pi$ of any tempered $L$-packet $\Pi$ to the relative character (Bessel distribution) $J_\Pi$ of the Kuznetsov formula, thus realizing the functoriality map corresponding to 
$$ \LX_1 \xrightarrow\sim \LX_2,$$
where $X_1 =\SL_2$ and $X_2=$the Whittaker model of $\SL_2$. 

I show that such an operator exists, if we take an enlarged space of test measures $\mathcal S^-_{L_X}(N,\psi\backslash G/N,\psi)$ corresponding to $L_X=$the adjoint $L$-function of $\SL_2$ evaluated at $1$; what this means, at the very least, is that $\mathcal S^-_{L_X}(N,\psi\backslash G/N,\psi)$ will contain the image of the unramified Whittaker measure which corresponds to the coefficients of the local Dirichlet series for $L(\Ad,1)$ --- see \S \ref{ssnonstandard}. Explicitly, if we choose representatives $\begin{pmatrix} & -\zeta^{-1} \\ \zeta \end{pmatrix}$ for generic $N\times N$-orbits on $G$, this is the usual space of test measures for the Kuznetsov formula, except that, instead of being of rapid decay as $\zeta\to \infty$, the test measures will be equal to $C(\zeta^{-1}) d^\times \zeta$, where $C$ is a smooth function at zero. 

Moreover, I show that this operator has a very simple form: Notice that the elements of $\mathcal S^-_{L_X}(N,\psi\backslash G/N,\psi)$ are measures on the one-dimensional affine space $N\backslash G\sslash N$ (where we use the coordinate $\zeta$ above). Similarly, the elements of $\mathcal S(\frac{G}{G})$ will also be measures on an one-dimensional space, namely the Chevalley quotient $\Dfrac{G}{G}$, where we take the coordinate to be the trace. It turns out that the transfer operator $\mathcal T$ is given by the multiplicative Fourier convolution $\mathscr F_{\Id, 1}$, discussed above, applied to measures on the affine line (under the action of the multiplicative group on the coordinates that we fixed). 

The Fourier transform of Rudnick's thesis also appears, in a slightly different and certainly more general form, in the thesis of Altug \cite{Altug1,Altug2,Altug3}. It also appears in a theorem of Soundararajan and Young \cite[Theorem 1.3]{SoundYoung} that is very close to the result that I prove here (it is a global version of the comparison, restricted to hyperbolic conjugacy classes). On the other hand, the spectral side of the comparison between the Kuznetsov formula and the trace formula appears, in the setting of complex loop groups and $D$-modules, features in recent work of Ben-Zvi and Gunningham in \cite{BZvi-Gunningham}.

\subsection{Scattering theory and asymptotics}

Why should Fourier transforms on the line be of any relevance? Are the spaces $N\backslash G\sslash N$ or $\Dfrac{G}{G}$ vector spaces, in any sense? This is probably a misleading point of view, which does not seem to lead to correct conclusions in higher rank. What I show here is that there is another, conceptual way to understand these quotient spaces, or at least their ``limits'' when we replace the spaces by their asymptotic cones. Most of the present, first part of this series of two papers is devoted to developing the harmonic analysis necessary in order to produce this conceptual explanation of the transfer operators.

That is, we will replace the non-trivial character $\psi$ of $N$ by the trivial character, and we will replace the quotient $\frac{G}{G} = G^\diag\backslash (G\times G)/G^\diag$ by $G^\diag\backslash (G_\emptyset\times G_\emptyset)/G^\diag$, where $G_\emptyset$, the asymptotic cone of $\SL_2$, is the variety of $2\times 2$ matrices of determinant one. Notice that the general setting of the relative trace formula is essential here, even when we are studying the usual trace formula: to study its ``asymptotics'', we need to replace the group by a different space, and view the adjoint quotient of the trace formula as a special case of a quotient space for the relative trace formula.

When we do that, the quotients $N\backslash G/N$ and $G^\diag\backslash (G_\emptyset\times G_\emptyset)/G^\diag$ naturally become embeddings of the \emph{same} torus $A\simeq \Gm$ (the universal Cartan of $\SL_2$). Character theory on these degenerate spaces is particularly simple (because they are essentially induced from a torus), and we use local harmonic analysis (scattering theory, see Section \ref{sec:scattering}) to explain that the correct transfer operator (the one that behaves in a prescribed way with respect to characters) between suitable spaces of test measures
$$\mathcal T_\emptyset: \mathcal S^\pm_{L_X}(N\backslash G/N) \xrightarrow\sim \mathcal S(G^\diag\backslash (G_\emptyset\times G_\emptyset)/G^\diag)$$
is given by the multiplicative Fourier convolution $\mathscr F_{\Id, 1}$. Thus, geometrically $\mathcal T$ is given by the same formula as $\mathcal T_\emptyset$, if we choose appropriate coordinates. (I should mention that I do not have a conceptual reason for this on-the-nose equality; in fact, it fails in other cases, as we will see, although one operator is still a deformation of the other.)

I mention that exactly the same phenomenon is true for the comparison between the Kuznetsov formula for $\PGL_2$ and the relative trace formula for $\Gm\backslash \PGL_2/\Gm$, which was developed in \cite{SaBE1}. I briefly revisit this case in Section \ref{sec:Waldspurger}.

\subsection{Open problems}

I hope that the present paper will raise more questions than it settles. Let me list some of them:

\begin{enumerate}
 \item Let $G$ be a quasi-split group, and $\psi$ a generic character of a maximal unipotent subgroup $N$. Let $r:\LG\to \GL(V)$ be a representation of its $L$-group, not necessarily irreducible. It does not harm to assume that there is a character $\partial$ of $G$ whose dual, composed with $r$, is the canonical (central) cocharacter $\Gm\to\GL(V)$. Attached to these data there should be a canonical space $\mathcal D^-_{L(r,\frac{1}{2})}(N,\psi\backslash G/N,\psi)$ of test half-densities for the Kuznetsov formula, containing (in the non-Archimedean case) the image of the generating series of the local unramified $L$-value $L(r,\frac{1}{2})$, and such that the integrals 
 $$ \int f J_\pi |\partial|^s$$
 of elements $f\in \mathcal D^-_{L(r,\frac{1}{2})}(N,\psi\backslash G/N,\psi)$ against the relative characters $J_\pi$ of irreducible representations admit a Godement--Jacquet theory: for example, their quotients by the local $L$-function $L(r,\frac{1}{2}+s)$ should be entire in $s$, and there should be a ``Hankel transform'' 
 $$ \mathcal H_r: \mathcal D^-_{L(r,\frac{1}{2})}(N,\psi\backslash G/N,\psi) \xrightarrow\sim \mathcal D^-_{L(r^\vee,\frac{1}{2})}(N,\psi\backslash G/N,\psi)$$
 satisfying a local functional equation involving the associated local $\gamma$-factors.
 
 None of these properties completely characterizes the space. It would be desirable to have a spectral characterization (possibly by means of a local relative trace formula), as well as a geometric one. 
 If $r$ is irreducible, this would be the image of the Schwartz space $\mathcal D(G_r)$ or the $L$-monoid of Ng\^o \cite{Ngo-PS}, which also has not been defined yet. It is interesting to ask whether it is easier to define this space at the level of the Kuznetsov formula. Although the definitions of such spaces that I give in this paper are somewhat ad-hoc, one may observe that at least for $r=$ many copies of the standard or the symmetric-square representation of $\GL_2$, the definition of $\mathcal D^-_{L(r,\frac{1}{2})}(N,\psi\backslash G/N,\psi)$ seems to be quite straightforward (see \S \ref{ssnonstandard}), while monoids are not well-suited to handle multiplicity.

 \item Once local transfer operators or Hankel transforms are available, and have a form that ``in principle'' satisfies a Poisson summation formula, it would be desirable to develop such a summation formula, which would amount to an identity of relative trace formulas. Such an application was developed in \cite{SaBE2}, and I do not see serious obstacles to adapting the methods to the transforms of the present paper; however, streamlining the arguments for the global application would be important in light of future developments, and would enhance our understanding of the nature of orbital integrals close to singularities (and the behavior of those under non-standard transfer operators). For example, one could try to upgrade the local Hankel transform of \eqref{Hankel-Sym2-intro} to a trace formula-theoretic proof of the functional equation of the symmetric-square $L$-function. 
 
 \item Although the fundamental lemma for the Hecke algebra is proven in this paper for all transfer operators and Hankel transforms considered, it would be desirable to have a geometric proof of the fundamental lemma, as in the endoscopic case \cite{Ngo-FL}. Such a proof would apply, in particular, to the more general transfer operators considered in the upcoming paper \cite{SaRankone}, where there is ongoing work of Johnstone and Krishna on the fundamental lemma.
 
 \item Most important of all, though, is to understand the nature of transfer operators and Hankel transforms, and how they generalize to higher rank. In this paper, I show that these operators are ``deformations'' of abelian Fourier convolutions of the corrresponding transforms on the horospherical boundary degenerations, which are completely understood. In examples of rank one, discussed in Sections \ref{sec:Rudnick} and \ref{sec:Waldspurger}, they are actually \emph{equal} to those Fourier convolutions, for a suitable choice of coordinates; this is generalized to all rank one varieties in the upcoming paper \cite{SaRankone}. The Hankel transforms, however, discussed in Section \ref{sec:sym2}, require intermediate ``correction factors'' that I do not understand. This is also the case for transfer operators in higher rank, which I have computed for some more examples jointly with Chen Wan, generalizing the calculation of Section \ref{sec:Rudnick}. Understanding the nature of these deformations is, in my mind, the quintessential issue in order to make progress towards ``beyond endoscopy'' in higher rank.
 
 Regarding Hankel transforms, Ng\^o has recently formulated a conjecture, stunning in its simplicity, about the kernel of the transform giving rise to the functional equation of \emph{any} $L$-function, as an invariant distribution on the group \cite{Ngo-Takagi}. It would be desirable to know what transformation it induces at the level of the Kuznetsov formula.

\end{enumerate}

\subsection{Notation}\label{ssnotation}

We work over a local (locally compact) field $F$. Whenever no confusion arises, I denote the set of $F$-points of a variety $X$ simply by $X$; for example, an integral of the form $\int_{\Gm}$ denotes an integral over the group $F^\times$. When discussing (Langlands) dual groups, I will similarly denote them as algebraic groups, e.g.: $\Gm$ is the dual group of $\Gm$; it will be clear from the context if we are referring to a group or its dual.

The categorical quotient $\spec F[X]^G$ of an affine or quasi-affine variety $X$ by a $G$-action will be denoted by $X\sslash G$. When $X$ is a group and $G$ a subgroup acting on $G$ by conjugacy, I will denote this quotient by $\Dfrac{X}{G}$ (and will use $\frac{X}{G}$ as a formal notation for what is denoted by $X/G$ below).

The notation $X\times^G Y$ will denote the quotient of a product $X\times Y$ of two $G$-varieties by the diagonal action of the group $G$. The notation will be used when $G$ acts freely, and the quotient exists as a variety; only when explicitly discussing stacks will this notation be used for the quotient in the stacky sense.

For a product of spaces $X\times Y$, I will sometimes use the notation $\mathcal L_X\boxtimes \mathcal L_Y$ to indicate the tensor product of two vector bundles, one pulled back from $X$ and the other from $Y$. I will sometimes use similar notation for operators (e.g., $S\boxtimes T$), to stress that each is applied to a different variable. ``Vector bundles'' (and especially ``line bundles'') will sometimes refer to complex vector bundles over the $F$-points of a variety; they correspond to the $l$-sheaves of Bernstein and Zelevinsky, in the non-Archimedean case, and to complex vector bundles for the smooth topology, in the Archimedean case; in particular, in both cases the notion of smooth sections is defined. Typically, at least in the Archimedean case, there will also be a natural notion of ``polynomial growth'' for sections of these bundles (i.e., they will be Nash bundles on Nash manifolds, cf.\ \cite{AGSchwartz}). When this structure is clear from the setting, I will be using it without explanation.

The space of ($\CC$-valued) Schwartz measures on the $F$-points of a smooth variety $X$ will be denoted by $\mathcal S(X)$. These are just smooth, compactly supported measures in the non-Archimedean case; in the Archimedean case, they are smooth measures which decay rapidly, together with their polynomial derivatives, cf.\ \cite{AGSchwartz}. (For ease of language, we will often not differentiate between the Archimedean and non-Archimedean cases, and say ``rapid decay'' for both; the reader should interpret this as ``compact support'' in the non-Archimedean case.) These are sections of a cosheaf for the semi-algebraic topology on $X$ (or, for the usual topology in the non-Archimedean case); at a few points, I will talk about the stalks over a closed subset $Y$, which are simply defined as the quotient $\mathcal S(X)/\mathcal S(X\smallsetminus Y)$. Moreover, at a few points I will need to work in the more general context of stacks, instead of varieties. The appropriate notion of a Schwartz space of measures, in this context, was introduced in \cite{SaStacks}, but I make an effort to describe them explicitly in the examples at hand, so that the reader will not require this background.

For smooth varieties, it also makes sense to talk about spaces of Schwartz functions or half-densities; those will be denoted by $\mathcal F(X)$, resp.\ $\mathcal D(X)$. Of course, if $dx$ is a nowhere vanishing smooth positive measure of polynomial growth (such as a Haar measure), we have $\mathcal S(X) = \mathcal F(X) dx = \mathcal D(X) (dx)^\frac{1}{2}$.

The image of the push-forward map from $\mathcal S(X)$ to measures on $X\sslash G$ will be denoted by $\mathcal S(X/G)$. This is a slight departure from notation used in \cite{SaStacks} for the Schwartz space of the quotient stack $[X/G]$, so, whenever I actually need a more stacky version of such a Schwartz space, I will be using notation of the form $\mathcal S([X/G])$ (and will explain what I mean by it, in each case). Notice that $\mathcal S(X/G)$ is different from $\mathcal S(X\sslash G)$ --- the latter is simply the usual Schwartz space of the affine variety $X\sslash G$ (assumed smooth), and it is typically, but not always, contained in $\mathcal S(X/G)$. One can typically translate from measures to functions (by choosing appropriate Haar measures), and then the space $\mathcal S(X/G)$ corresponds to the space of ``stable orbital integrals'' for the $G$-action on $X$. 

We broaden these notions (and notations), to include spaces of the form $\mathcal F((H,\chi)\backslash G)$, when $\chi$ is a complex character of a subgroup $H$ of $G$; in this case, this notation means $(H,\chi)$-equivariant functions on $G$, which are smooth and of compact support (in the non-Archimedean case) or rapid decay (in the Archimedean case) modulo $H$. The character $\chi$ being of ``polynomial growth'' means that the notion of rapid decay modulo $H$ makes sense, by choosing semi-algebraic local sections of the map $G\to H\backslash G$. Elements of $\mathcal F((H,\chi)\backslash G)$ can also be thought of as Schwartz sections of a complex line bundle $\mathcal L_\chi$ over $H\backslash G$; we can, similarly, consider Schwartz measures or half-densities. For a character $\psi$ of a maximal unipotent group $N$ of $G$, we will be using the notation $\mathcal S(N,\psi\backslash G/N,\psi)$ for the space of ``Schwartz test measures for the Kuznetsov quotient of $G$'', see \S \ref{sstwistedpf}. The appropriate way to think of those is as measures valued in a complex line bundle over the stack $[N\backslash G/N]$, but here we just understand them as measures on the affine quotient $N\backslash G\sslash N$, by some conventions that we explain in \S \ref{sstwistedpf}. Sometimes, we will treat the symbol $(N,\psi\backslash G/N,\psi)$ as a ``space'', e.g., we will be talking about push-forward of measures to that ``space'', meaning the twisted push-forward to $N\backslash G\sslash N$ that is described in that section.

All of these Schwartz spaces will be viewed as abstract vector spaces, in the non-Archimedean case, and as (nuclear) Fr\'echet spaces, in the Archimedean case. The Fr\'echet structure is the usual one (see \cite{AGSchwartz}) for $\mathcal S(X)$, while $\mathcal S(X/G)$ will inherit the quotient topology. By $\hat\otimes$ I denote the completed tensor product of nuclear Fr\'echet spaces; the completion should be ignored in the non-Archimedean case, as should any references to topology. (For convenience of language, I do not always differentiate between the Archimedean and the non-Archimedean case.) The space $V_G$ of coinvariants of a Fr\'echet representation of a group $G$ will, by definition, be the completion of the algebraic coinvariant space, that is, the quotient of $V$ by the \emph{closure} of the set of vectors of the form $v-g\cdot v$.

In the Archimedean case, the appropriate category of Fr\'echet $G$-repre\-sentations to consider is that of \emph{Fr\'echet representations of moderate growth}, or \emph{$F$-representations}, in the language of \cite{BeKr}: these are countable inverse limits of $G$-representations, i.e., they have a complete system of seminorms for which the $G$-action is continuous. I point the reader to \cite{BeKr} for more details, and for the corresponding ``smooth'' notion of \emph{$SF$-representations}.

We fix throughout a non-trivial unitary character $\psi$ of the additive group $F$. If $F$ is non-Archimedean, we will be assuming that its conductor is the ring of integers $\mathfrak o$. We also fix a measure $dx$ on $F$ which is self-dual with respect to $\psi$; this induces a measure $|\omega|$ on $X(F)$, for every volume form $\omega$ on a smooth variety $X$ over $F$. We will also use the measure $d^\times x := \frac{dx}{|x|}$ on the multiplicative group $F^\times$. When $F$ is non-Archimedean and unramified over the base field $\mathbb Q_p$ or $\mathbb F_p((t))$, $dx(\mathfrak o)=1$. The absolute value of any local field is defined to be compatible with the one of the base field under the norm map; in particular, the absolute value on $\CC$ is the square of the usual one.

We write $f\ll g$ for two positive functions on a space $X$ to indicate that there is an absolute constant $C$ such that $f(x)\le C g(x)$.

For a representation $V$ of a group $G$, and a measure $h$ on $G$ (or, more general distributions in some settings, see \S \ref{ssmultipliers}), we write $h\cdot v$ for the integral of the $G$-action against $h$:
$$h\cdot v := \int_G h(g) (g\cdot v),$$
whenever this makes sense. When $G$ acts on a space $X$ (say, on the right), the action on functions, half-densities or measures will be the regular one (which is a left action); in particular, 
$$ h\cdot f(x) = \int_G h(g) f(xg).$$
There is also another, right action in this case, the convolution action, corresponding to the push-forward under the action map $X\times G\to X$. The two are related by
$$ h\star f(x) = \int_G h(g^{-1}) f(xg) = h^\vee\cdot f(x),$$
where $h^\vee (g) = h(g^{-1})$. 

For a torus $T$, I denote by $\hat T$ the group of its unitary (complex) characters (that it, characters of $T(F)$), and by $\hat T_\CC$ its \emph{complexification}, the group of all complex characters. The notion of ``complexification'' makes sense, here, because $\hat T$ is naturally a real algebraic variety (with infinitely many components, in general), if $F$ is non-Archimedean, and a real analytic subgroup of a complex analytic group $\hat T_\CC$, when $F$ is Archimedean. I point the reader to \S \ref{ssMellin} for more details.

When the torus $T$ acts on a space $X$, at various points in the paper I define Mellin transforms $\hat T_\CC\ni \chi\mapsto \check f(\chi)$ of functions, half-densities or measures $f$ on $X$. I note that my parametrization of Mellin transforms is such that the map $f\to \check f(\chi)$ is $(T,\chi)$-equivariant (sometimes, for a normalized action); this is inverse to the classical definition of Mellin transform of a function on $\Rplus$ as $\check f(s) = \int f(x) x^s d^\times x$. By this convention, if $h$ is a measure on $T$ with Mellin transform $\check h$, and $f$ is a measure on the space $X$, we have
$$ \widecheck{h\star \varphi}(\chi) = \check h(\chi) \check f(\chi)$$
for the convolution action, and 
$$ \widecheck{h\cdot\varphi}(\chi) = \check h(\chi^{-1}) \check f(\chi)$$
for the regular action. I caution that this convention for Mellin transforms is also inverse to the conventions about Satake transforms of split tori over non-Archimedean fields; if we identify the identity component of $\hat T_\CC$ with the (complex points of the) Langlands dual torus $\check T$, then the Satake transform of a measure $h$ on $T(F)/T(\mathfrak o)$ is equal to what we denote by $\check h(\bullet^{-1})|_{\check T}$. 

For a reductive group $G$, its \emph{universal Cartan}, or simply its Cartan $A_G$ is not a subgroup, but an abstract torus, defined as the quotient of any Borel subgroup $B$ by its unipotent radical $N$. It is unique up to unique isomorphism, and defined over $F$ even when $B$ is not. It comes equipped with a based root datum; in particular, the ``positive'' and ``negative'' roots of $A_G$ are well-defined subsets of its character group, and similarly for coroots. We will typically use additive notation for weights and coweights of a torus, so when we need think of them as morphisms to or from the multiplicative group $\Gm$, we may use exponential notation, like $e^\alpha$.

The Cartan of $G$ is the appropriate basis for the definition of the dual group: the connected dual group $\check G$ of $G$ contains a canonical Cartan which is dual to $A_G$, and the $L$-group $\LG$ contains the $L$-group of $A_G$. 
Given a representation $r$ of $\LG$, the associated local Langlands $L$-function of an irreducible representation $\pi$ of $G$ will be denoted by 
$$ L(\pi, r, s).$$
For example, when $G$ is a torus and $\check\lambda:\Gm\to G$ a cocharacter (thought of as a character of the dual torus), $L(\chi,\check\lambda, s)$ stands for the local Dirichlet $L$-function $L(\chi\circ e^{\check\lambda}, s)$. This notation for Dirichlet $L$-functions will also be used in this paper, and we will denote the local Dedekind zeta function of $F$ simply by $\zeta(s)$. We will sometimes also use alternative, standard notation for some $L$-functions, e.g.,
$ L(\pi_1\times \pi_2, s)$, $L(\Sym^2(\pi), s)$ etc., for the Rankin--Selberg, resp.\ the symmetric-square $L$-function of $\GL_n$. If $\chi$ is a multiplicative character, we may also use expressions of the form $L(\chi\times \Sym^2(\pi), s)$ for the $L$-function that arises from the stated ($\Sym^2$, in this example) representation of the dual group of $G$, tensored by the scalar action of $\Gm=$ the dual group of $\Gm$. We denote by $|\bullet|$ the absolute value character of $\Gm$ (that is, of $F^\times$), so, in this notation, 
$L(|\bullet|^s\times \Sym^2(\pi),0)$ is the same as $L(\Sym^2(\pi),s)$. 

For a large part of the paper (from Section \ref{sec:RS} on) we will consistenly be using $A$ to denote the Cartan of $\SL_2$, and $A_\ad$ for the Cartan of $\PGL_2$. We will consistently be identifying these groups with $\Gm$, the former via the positive half-root character, and the latter via the half-root character. These identifications translate the natural map $A\to A_\ad$ to the square map $\Gm\to \Gm$. Sometimes, when it is clear which of these two groups we are talking about, we will be using these identifications to write their $L$-functions as Dirichlet $L$-functions, that is:
\begin{description}
 \item for a character $\chi$ of the group $A$, $L(\chi,s):=L(\chi,\check\alpha,s)$;
 \item for a character $\chi$ of the group $A_\ad$, $L(\chi,s):= L(\chi,\frac{\check\alpha}{2},s)$.
\end{description}
Since these notations are not compatible with the pullback map of characters, we will be careful not to use them when both groups are in play.

The notation $\gamma(\pi, r, s, \psi)$ will be used to denote the factors of the local functional equation of an $L$-function, cf.\ \S \ref{ssTate} and \S \ref{ssHankelrelchars}. It is related to the $L$- and epsilon-factors by
\begin{equation}\label{epsilon}
 \gamma(\pi, r, s, \psi) L(\pi, r, s) = \epsilon(\pi, r, s, \psi) L(\pi, r^\vee, 1-s),
\end{equation}
where $r^\vee$ is the dual representation of $r$, so $L(\pi, r^\vee, 1-s) = L(\tilde\pi, r, 1-s)$. Here and throughout, $\tilde\pi$ denotes the admissible dual (contragredient) of a smooth, admissible representation $\pi$.

The notion of ``universal Cartan'' generalizes from groups to spherical varieties. If $X$ is a spherical variety (i.e., a normal connected variety with an open orbit $\mathring X$ for the Borel subgroup $B$) under the action of a reductive group $G$, the quotient $\mathring X\sslash N$ (where $N$ is the unipotent radical of $B$) has an action of $A_G=B/N$, that factors through a faithful action of a quotient $A_X$ of $G$ --- this is the Cartan of $X$. For example, for $X=H$ a reductive group under the $G=H\times H$-action (which we define to be a right action, i.e., $x\cdot (h_1, h_2):= h_1^{-1} x h_2$), by the Bruhat decomposition one sees that $A_X$ is the quotient of $A_G=A_H\times A_H$ by the subgroup of elements $({^wa}, a)$, where $a\in A_H$ and $w$ is the longest element of the Weyl group.

This definition creates some inconvenience for a \emph{horospherical} variety (=one where stabilizers contain maximal unipotent subgroups), like $X=N\backslash G$, because the natural action of $A=B/N$ by $G$-automorphisms on $X$ is not compatible with the above map $A\to A_X$ (which in this case is just an isomorphism), but is \emph{conjugate to it by the longest element of the Weyl group}. To resolve this notational problem, we consistenly define the $A$-action on $N\backslash G$ to be the \emph{twist} of the obvious one by the longest Weyl group element, that is, if $a\in A$ then we define
$$ a\cdot Nx := Nb x,$$
where $b\in B$ \emph{represents the element ${^wa}$} in $A=B/N$. We extend this convention to all horospherical varieties. This is an unfortunate nuissance, but it is more benign than having a different action of $A$ on the variety $X$, and a different on its universal Cartan $A_X$.

Finally, I mention that actions of groups on spaces of functions or measures on $G$-spaces will often be normalized, in order to be unitary. (Actions on half-densities need no such normalization, which makes them particularly convenient.) However, in order for the reader not to have to keep track of normalizations, I have made sure that they are not needed in the statements of the main theorems (unless explicitly stated otherwise). Similarly, the notation is adapted to unnormalized actions, even if we are working with normalized ones. For example, in \S \ref{ssHankel} we introduce certain spaces of test half-densities and measures, $\mathcal D^-_{L(\Sym^2, \frac{1}{2})}(N,\psi\backslash G/N,\psi)$ and $\mathcal S^-_{L(\Sym^2, 1)}(N,\psi\backslash G/N,\psi)$, for the Kuznetsov formula of the group $G=\Gm\times\SL_2$; the difference in notation, $L(\Sym^2, \frac{1}{2})$ vs.\ $L(\Sym^2, 1)$, has to do with their images under \emph{unnormalized} push-forward (integration over $\Gm$) to the Kuznetsov formula of $\SL_2$, despite the fact that in \S \ref{ssdescent}, and elsewhere, we work with a normalized version of this push-forward. The analog of this in a more the more familiar setting of Tate's thesis would be the spaces of Schwartz half-densities and measures on $\mathbbm A^1$, where a Tate zeta integral 
$$ \int f(a) |a|^s$$
against a Schwartz measure $f$ would be a holomorphic multiple of the local zeta function $\zeta(s+1)$, while a Tate zeta integral 
$$ \int \varphi(a) |a|^s (d^\times a)^\frac{1}{2}$$
against a Schwartz half-density $\varphi$ would be a holomorphic multiple of $\zeta(s+\frac{1}{2})$; hence, the analogous notation would be $\mathcal S_{L(\Id, 1)} (\Gm)$ for $\mathcal S(\mathbbm A^1)$, and $\mathcal S_{L(\Id, \frac{1}{2})}$ for $\mathcal D(\mathbbm A^1)$.

\subsection{Acknowledgements}

This paper would not have been possible without the constant encouragement, numerous conversations, and many references and ideas provided by Ng\^o Bao Ch\^au, who invited me to spend the winter and spring quarters of 2017 at the University of Chicago. I also thank Daniel Johnstone for a presentation of Venkatesh's thesis which initiated my understanding of it. I thank Valentin Blomer for various references on related results in analytic number theory. Last but not least, I am deeply indebted to the Institute for Advanced Study for providing me with the perfect environment in order to complete this work, during the academic year 2017--2018. In fact, my earlier papers \cite{SaBE1, SaBE2}, which gave birth to this cycle of ideas, were conceived during my previous stay at the Institute in the spring of 2011, hence this paper owes to the IAS in multiple ways.

This work was supported by NSF grant DMS-1502270, and by a stipend to the IAS from the Charles Simonyi Endowment.

\section{Basic tools: multiplicative Fourier convolutions, non-standard Kuznetsov test measures} \label{sec:background}

The tools presented in this section were also introduced in \cite{SaHanoi}. I summarize them briefly for the sake of completeness.

\subsection{Mellin transforms and multiplicative Fourier convolutions} 

\subsubsection{Mellin transforms} \label{ssMellin}

Let $T$ be a torus over $F$. The unitary dual of $T=T(F)$ will be denoted by $\widehat T$, and its entire character group by $\widehat T_\CC$. The character group has a natural structure of a complex manifold and, if $F$ is non-Archimedean, of a complex algebraic variety (with infinitely many components). The structure is automatically determined by its restriction to the identity component, which is the character group of $\Lambda_T:= $ the image of the map
\begin{eqnarray}\nonumber
  \log_T: T(F) \to \mathfrak t_\RR := \Hom_\Z(\Hom(T,\Gm), \RR)\\
 t \mapsto (\chi \mapsto \log|\chi(t)|).\label{logtorus}
\end{eqnarray}

This group is discrete, in the non-Archimedean case, so the identity component of $\widehat T_\CC$ is the complex torus with coordinate ring equal to the group ring of $\Lambda_T$. 

In the Archimedean case, 
we have $\Lambda_T = \mathfrak t_\RR$, and the identity component of $\widehat T_\CC$ can be identified with the dual $\mathfrak t_\CC^* = \Hom(T,\Gm)\otimes \CC$, by sending an element $s$ in the latter to the character $\chi_s (t) = e^{\left< \log_T t, s\right>}$. The space $\mathfrak t_\CC^*$ is the direct sum of $\RR$-vector subspaces:
$$ \mathfrak t_\CC^* = \mathfrak t_\RR^* \oplus i \mathfrak t_\RR^*,$$
with the imaginary summand corresponding to the identity component of $\widehat T$. By a  ``bounded vertical strip'' we will mean the preimage of a compact subset of the real subspace under the projection map.

In the Archimedean case there is a canonical splitting of the sequence
\begin{equation}\label{logArchim} 1\to T_0 \to T(F) \to \mathfrak t_\RR \to 1,
\end{equation}
where $T_0$ is the compact group $\ker \log_T$, whose image coincides with the subgroup generated by the subgroups $\check\lambda(\Rplus)$, where $\check\lambda$ ranges over the cocharacters into $T$. This identifies the character group 
\begin{equation}\label{dualArchimedean}
 \widehat T_\CC \simeq \widehat{T_0} \times \mathfrak t_\CC^*.
\end{equation}
Notice that the character group $\widehat{T_0}$ is discrete.

The \emph{Mellin transform} of a measure $f\in \mathcal S(T)$ is the function 
\begin{equation}\label{Mellin-torus}\widehat T_\CC\ni \chi\mapsto  \int_T f(a) \chi^{-1}(a).
\end{equation}

We recall the Paley--Wiener theorem: 

\begin{theorem}\label{PW-torus} 
In the non-Archimedean case, Mellin transform defines an isomorphism between $\mathcal S(T)$ and the space of polynomial functions on $\widehat T_\CC$ supported on a finite number of components.

In the Archimedean case, under the isomorphism \eqref{dualArchimedean}, it defines an isomorphism between $\mathcal S(T)$ and the completed tensor product of Fr\'echet spaces
 $$ \mathbb H^\PW(\widehat T_\CC):= \mathscr C(\widehat{T_0}) \hat\otimes \mathbb H^\PW(\mathfrak t_\CC^*).$$
Here, $\mathscr C(\widehat{T_0})$ denotes the dual of $\mathcal S(T_0)$, that is, the space of functions $\varphi$ on the discrete abelian group $\widehat{T_0}$ such that, for any norm $\Vert \bullet \Vert$ on the vector space $\widehat{T_0}\otimes_{\Z} \RR$, and any $N\ge 0$, the function $\Vert n \Vert^N \varphi(n)$ is bounded; and
$\mathbb H^\PW(\mathfrak t_\CC^*)$ is the Paley--Wiener space of entire functions on $\mathfrak t_\CC^*$ which are of rapid decay on bounded vertical strips, that is, on every bounded vertical strip $V$ and for every $N$ satisfy 
\begin{equation}\label{vertstrip} \sup_{s\in V} |f(s)| (1+\Im(s)|)^N <\infty,\end{equation}
where $|\bullet|$ denotes any norm on the imaginary subspace $i\mathfrak t_\RR^*$.
\end{theorem}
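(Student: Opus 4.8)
The plan is to prove the Paley--Wiener theorem for the Mellin transform on a torus $T$ by reducing it, in stages, to the classical Paley--Wiener theorem for $\RR^n$ (and its discrete analogue for $\Z^n$). The Mellin transform \eqref{Mellin-torus} is, up to the choice of coordinates, just the Fourier transform for the locally compact abelian group $T(F)$, so the content of the theorem is a careful bookkeeping of the structure of $T(F)$ and its dual, together with the statement that Schwartz measures go to the asserted space of holomorphic functions.

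First I would treat the non-Archimedean case. Here $T(F)$ sits in the exact sequence $1\to T_0 \to T(F)\xrightarrow{\log_T} \Lambda_T\to 1$ with $\Lambda_T\cong \Z^r$ a lattice and $T_0$ a profinite (hence compact, totally disconnected) group; moreover this sequence splits after passing to a finite-index subgroup, and in fact one can choose a splitting $\Lambda_T\to T(F)$ as a homomorphism (since $\Z^r$ is free). A Schwartz (= locally constant, compactly supported) measure $f$ on $T(F)$ is then a finite linear combination of measures of the form $\mathbf{1}_{t K}\, dt$ with $K$ an open compact subgroup of $T_0$; fixing such a $K$, the space of $K$-invariant Schwartz measures is identified with $\mathcal S(T(F)/K)$, and $T(F)/K$ is an extension of $\Z^r$ by the finite group $T_0/K$, hence (non-canonically) $\Z^r\times (T_0/K)$. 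On such a group the Fourier transform of a finitely supported function is a trigonometric polynomial in $r$ variables times a function on the (finite) character group of $T_0/K$; letting $K$ shrink, the characters of $T_0$ that occur range over finitely many $\widehat{T_0}$-cosets of the identity component, i.e.\ finitely many components of $\widehat T_\CC$, and on each the transform is a Laurent polynomial in the torus coordinates, i.e.\ a regular function on that component. Conversely any such function is visibly hit. This is the first, and I expect routine, half.

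For the Archimedean case I would use the canonical splitting $T(F)\cong T_0\times \mathfrak t_\RR$ recorded in \eqref{logArchim}--\eqref{dualArchimedean}, where $T_0$ is a compact torus (a product of circles and copies of $\{\pm1\}$ from the real places) and $\mathfrak t_\RR\cong\RR^r$. Correspondingly $\mathcal S(T)$ decomposes as a completed tensor product $\mathcal S(T_0)\hat\otimes \mathcal S(\mathfrak t_\RR)$ — here one must check that the Schwartz space of the product (in the sense of \cite{AGSchwartz}) really is the completed tensor product of the factors' Schwartz spaces, which is standard for Nash manifolds. The Fourier/Mellin transform then factors as the transform on $T_0$ tensor the transform on $\RR^r$. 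The transform on the compact group $T_0$ sends $\mathcal S(T_0)=C^\infty(T_0)$ isomorphically onto the space of rapidly decreasing functions on the discrete dual $\widehat{T_0}$, which is exactly $\mathscr C(\widehat{T_0})$ as defined — this is the classical Fourier-series Paley--Wiener / smoothness-versus-decay dictionary. The transform on $\RR^r$ sends $\mathcal S(\RR^r)$ isomorphically onto $\mathbb H^\PW(\mathfrak t_\CC^*)$, the space of entire functions rapidly decaying on vertical strips satisfying \eqref{vertstrip}: this is precisely the classical Paley--Wiener--Schwartz theorem (rapid decay together with all derivatives, on a half-space/strip, corresponds to analytic continuation with uniform decay), which I would cite rather than reprove. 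Taking the completed tensor product of the two isomorphisms, and noting that $\hat\otimes$ is exact on nuclear Fr\'echet spaces so it commutes with forming these function spaces, gives the claimed isomorphism $\mathcal S(T)\xrightarrow{\sim}\mathscr C(\widehat{T_0})\hat\otimes \mathbb H^\PW(\mathfrak t_\CC^*)$.

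The main obstacle, such as it is, is not any single deep step but the matching of analytic structures: verifying that the Schwartz space of $T(F)$ genuinely is the completed tensor product $\mathcal S(T_0)\hat\otimes\mathcal S(\mathfrak t_\RR)$ (compatibly with the Fr\'echet topologies), and that the topology on the target — the completed tensor product of $\mathscr C(\widehat{T_0})$ with the Paley--Wiener space — is the one making the transform a topological isomorphism, not merely a bijection. Both are ``known'' in the functional-analysis literature on Nash manifolds and nuclear spaces, so in the write-up I would isolate them as lemmas with references and spend the bulk of the argument on the explicit identifications of the groups $T_0$, $\Lambda_T$, and their duals. In the non-Archimedean case the only subtlety is making the ``finitely many components'' statement precise, i.e.\ relating open compact subgroups $K\subset T_0$ to finite sets of components of $\widehat T_\CC$; this is elementary once one fixes the splitting $\Lambda_T\hookrightarrow T(F)$.
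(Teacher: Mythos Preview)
Your overall strategy is correct and matches the standard approach (the paper itself does not prove this theorem, citing it as classical with a pointer to \cite[\S 3.1]{SaSelberg}). The non-Archimedean half is fine. However, there is a genuine gap in the Archimedean argument: the claimed decomposition $\mathcal S(T) \cong \mathcal S(T_0)\hat\otimes \mathcal S(\mathfrak t_\RR)$ is false if $\mathcal S(\mathfrak t_\RR)$ means the usual Schwartz space on the vector space $\mathfrak t_\RR\cong\RR^r$, and the subsequent step ``$\mathcal S(\RR^r)\xrightarrow{\sim}\mathbb H^\PW(\mathfrak t_\CC^*)$ by classical Paley--Wiener--Schwartz'' is simply incorrect as stated.

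The issue is that the splitting $T(F)\cong T_0\times\mathfrak t_\RR$ is through the exponential map, which is not semi-algebraic; hence the Nash structures, and with them the Aizenbud--Gourevitch Schwartz spaces, do not transport along it. Concretely, for $T=\Gm$ over $\RR$, a Schwartz function on $\RR^\times_+$ in the Nash sense must have $t^n\partial_t^k f(t)$ bounded for all $n\in\Z$ and $k\ge 0$; under $x=\log t$ this means $g(x)=f(e^x)$ and all its derivatives are $O(e^{-N|x|})$ for every $N$, which is far stronger than ordinary Schwartz decay. It is exactly this faster-than-exponential decay that makes $\int g(x)e^{-sx}\,dx$ converge for all $s\in\CC$ and yield an entire function with rapid decay on bounded vertical strips. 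By contrast, the Fourier transform of the ordinary $\mathcal S(\RR)$ is again $\mathcal S(\RR)$ on the imaginary line and does \emph{not} extend to an entire function --- so your invocation of the classical theorem would fail at this step. The fix is to work with the correct function space on the split-torus factor and prove the relevant Paley--Wiener statement for it directly (entirety from exponential decay, vertical-strip bounds from integration by parts, converse by contour shifting); this is elementary but is not the classical theorem, and is precisely why the paper remarks that most references state Paley--Wiener only for compactly supported functions and points to a specific source for the Schwartz version.
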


This theorem is very classical, but since in most references the Paley--Wiener theorem is stated for compactly supported smooth functions, the reader can consult \cite[\S 3.1]{SaSelberg} for a proof on Schwartz spaces.

Finally, I introduce the notion of \emph{average volume} of a torus with respect to the logarithmic map \eqref{logtorus}: The space $\mathfrak t_\RR$ has a canonical lattice, dual to the character lattice of $T$, hence a canonical Haar measure, induced from the standard measure on $\RR$. Given a measure $dt$ on $T$, we define 
\begin{equation}\label{avgvol}\AvgVol(T)= \AvgVol(T,dt) = \lim_{c\to\infty} \frac{dt(\log_T^{-1}(cB))}{\Vol(cB)},
\end{equation}
where $B$ is any ball around zero in $\mathfrak t_\RR$. If the character group is trivial, we take $\Vol(\RR^0)=1$, so $\AvgVol(T)=\Vol(T)$.

For example, if $T=F^\times$, with $F$ an unramified extension of $\mathbb Q_p$ with ring of integers $\mathfrak o$ and residual degree $q$, and we take $dt = d^\times x= \frac{d x}{x}$ with $dx(\mathfrak o)=1$, we have $\AvgVol(F^\times) = \frac{1-q^{-1}}{\log q}$. In general, if $dx$ is the self-dual measure with respect to the additive character $\psi$ on $F$, we have 
\begin{equation}\label{AvgVolFtimes}
\AvgVol(F^\times, d^\times x) = \Res_{s=0} \gamma(1-s,\psi),
\end{equation}
see \cite[(2.26)]{SaBE2}, where $\gamma$ is the gamma factor of the local functional equation of Tate integrals, to be recalled below in \eqref{gammaZeta}.

\subsubsection{Multipliers} \label{ssmultipliers}

Let $\mathcal M(T)$ denote the following categories of modules for $T$:

\begin{itemize}
 \item in the non-Archimedean case, smooth representations;
 \item in the Archimedean case, \emph{smooth representations of moderate growth on Fr\'echet spaces}, or, equivalently, countable inverse limits of Banach representations, cf.\ \cite{BeKr}.
\end{itemize}

The action of $T$ on any $V \in \mathcal M(T)$  extends to an action
$$ \mathcal S(T)\hat\otimes V \to V.$$
Here, $\hat\otimes$ denotes completed tensor product in the Archimedean case (where both spaces are Fr\'echet, and $\mathcal S(T)$ is nuclear), and should be identified with $\otimes$ in the non-Archimedean case. 

If $V$ denotes a space of measures on $T$, and the Mellin transform \eqref{Mellin-torus} can be extended by a convergent integral to $V$, for $\chi$ in some region in $\widehat T_\CC$, then for such $\chi$ we have
$$ \widecheck{(f \star \varphi)}(\chi) = \check f(\chi)\check\varphi(\chi),$$
for $f\in \mathcal S(T), \varphi\in V$, where $\star$ denotes convolution.

In fact, the action on any object of $\mathcal M(T)$ extends to a larger algebra 
$ \widehat{\mathcal S(T)}$,
consisting of those (tempered) distributions on $T$ which after convolution by elements of $\mathcal S(T)$, become elements of $\mathcal S(T)$. Indeed, for every $V\in \mathcal M(T)$,  the map $\mathcal S(T)\otimes V\to V$ (uncompleted tensor product!) is surjective --- in the Archimedean case, this is the Dixmier--Malliavin theorem. Thus, for every object $V\in \mathcal M(T)$ and $v = \sum h_i\cdot v_i\in V$, we can define the action of an element $h \in \widehat{\mathcal S(T)}$ as
$$ h\cdot v = \sum_{i=1}^n (h\star h_i)\cdot v.$$
To see that it is well-defined, use projectors $e_\chi \in \mathcal S(T)$ to the various $T_0$-types $\chi$ (i.e., characters of $T_0$, the maximal compact subgroup), and notice that the map $V\ni v \mapsto (v_\chi)_{\chi \in \widehat{T_0}}:= (e_\chi\cdot v)_\chi$ is injective. If $\sum h_i\cdot v_i=0$ then $\sum_i (h\star h_i) \cdot v_i = 0$, because 
$$ \left(\sum_i (h\star h_i) \cdot v_i\right)_\chi = \sum_i (e_\chi\star h) \star (e_\chi\star h) \cdot v_i =(e_\chi\star h) \cdot \sum_i  (e_\chi\star h) \cdot v_i  .$$

In the non-Archimedean case, $\widehat{\mathcal S(T)}$ coincides with the completed Hecke algebra (Bernstein center) of \emph{essentially compact} distributions, i.e., those distributions which become compactly supported after smoothing; these are the distributions whose Mellin transforms are polynomial on $\widehat T_\CC$, without the assumption of support on a finite number of components. 

In the Archimedean case, this includes, of course, the enveloping algebra of the complexified Lie algebra of $T$, and in particular the enveloping algebra of the image of $\mathfrak t_\RR$ under the splitting of \eqref{logArchim}, which via Mellin transform is identified with the polynomial algebra on $t_\CC^*$, pulled back to $\widehat T_\CC$ via the projection to the second factor of \eqref{dualArchimedean}.

\subsubsection{Tate zeta integrals} \label{ssTate}

Recall that we have fixed a non-trivial unitary character $\psi$ of the additive group $F$, and a measure $dx$ which is self-dual with respect to $\psi$. We set $d^\times x := \frac{dx}{|x|}$.

The \emph{Tate zeta integral} of a Schwartz function on the line, $\Phi \in \mathcal F(F)$, is the integral
$$ Z(\Phi, \chi, s) = \int_{F^\times} \Phi(x) \chi(x) |x|^s d^\times x.$$

Thus, the Tate zeta integral is the Mellin transform of the measure $\Phi d^\times x$, evaluated at the character $\chi'=\chi^{-1}|\bullet|^{-s}$. It is defined convergent when $\Re(s)\gg 0$, and extends to $\chi' \in \widehat{F^\times}_\CC$ as a rational function, in the non-Archimedean case, and a meromorphic one, in the Archimedean case. It is a holomorphic multiple of the $L$-factor $L(\chi, s)$, and of rapid decay in bounded vertical strips (away from the poles).

Defining the Fourier transform of the function as $\hat\Phi(t) = \int \Phi(u) \psi(ut) du$, the local functional equation of Tate \cite{Tate-Corvallis} defines a \emph{gamma factor} by:
\begin{equation}\label{gammaZeta}\gamma(\chi,s,\psi) Z(\varphi,\chi,s) = Z(\hat\varphi, \chi^{-1},1-s).
\end{equation}

The gamma factor can be written as
\begin{equation}\label{gammafactor}\gamma(\chi,1-s,\psi) = \frac{\epsilon(\chi,1-s,\psi) L(\chi^{-1},s)}{L(\chi,1-s)},\end{equation}
where the epsilon factor is entire.

We recall that, in the Archimedean case, the $L$-factor is as follows:
\begin{itemize}
 \item  If $F=\RR$ and $\chi = (\sgn)^\epsilon |\bullet|^s$, where $\sgn$ is the sign character and $\epsilon = 0$ or $1$, we have 
 $$ L(\chi, 0) = \pi^{-\frac{s+\epsilon}{2}} \Gamma(\frac{s+\epsilon}{2}).$$
 \item
 If $F=\CC$ and $\chi(z) = e^{i \cdot m  \arg(z)} |z|^s$ (the absolute value here is the square of the usual one, i.e., the absolute value of the norm to $\RR^\times$), then 
 $$ L(\chi, 0) = 2\cdot (2\pi)^{-s} \Gamma(s + \frac{|m|}{2}).$$ 
\end{itemize}
Notice that the poles of the $L$-function are contained in the poles of the function 
$$\mathfrak G(s):= \begin{cases}
                          \Gamma(s) ,\mbox{ if } F=\RR; \\
                          \Gamma(2s), \mbox{ if } F=\CC.
                         \end{cases}$$
We let $\mathbb H^\PW_{\mathfrak G}(\CC)$ denote the Fr\'echet space of holomorphic multiples of $\mathfrak G$, which are of rapid decay in bounded vertical strips, away from the poles.

We have the following description of the image and residues of Tate zeta integrals:
\begin{proposition}\label{Tateimage}
 The Tate zeta integral $\Phi\mapsto Z(\Phi,\chi,0)$ defines an isomorphism between the space $\mathcal F(F)$ and the space of polynomial multiples, in the non-Archimedean case, and holomorphic multiples, in the Archimedean case, of the function $L(\chi,0)$ on $\widehat{F^\times}_\CC$ which have the following properties: 
 \begin{itemize} 
  \item in the non-Archimedean case, they are supported on a finite number of connected components of $\widehat{F^\times}$;
  \item in the Archimedean case, factoring the character group of $F^\times$ as in \eqref{dualArchimedean}, with $T_0=$the maximal compact subgroup of $F^\times$, they belong to the completed tensor product
  $$ \mathscr C(\widehat{T_0}) \hat\otimes \mathbb H^\PW_{\mathfrak G}(\CC).$$
 \end{itemize}
 
 Moreover, the residue at the trivial character is given by the formula 
\begin{equation}\label{Tateresidue} \Res_{s=0} Z(\Phi, 1, s) = \Phi(0) \AvgVol(F^\times),\end{equation}
where $\AvgVol$ is the average volume defined in \eqref{avgvol}.
\end{proposition}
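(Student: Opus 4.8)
The plan is to treat the two assertions separately: the description of the image of $\Phi\mapsto Z(\Phi,\chi,0)$, and the residue formula \eqref{Tateresidue}. For the image, the containment in the asserted space is exactly Tate's thesis \cite{Tate-Corvallis}: convergence for $\Re(s)\gg 0$, meromorphic (rational, in the non-Archimedean case) continuation in $\chi\in\widehat{F^\times}_\CC$, entirety of $Z(\Phi,\chi,s)/L(\chi,s)$, and rapid decay of $Z$ in bounded vertical strips off the poles of $L$. The remaining shape constraint is accounted for by smoothness of $\Phi$: invariance under a small open subgroup of $F^\times$ forces support on finitely many components in the non-Archimedean case, while in the Archimedean case the usual integration-by-parts argument gives rapid decay in the $\widehat{T_0}$-direction, i.e.\ membership in $\mathscr C(\widehat{T_0})\hat\otimes\mathbb H^{\PW}_{\mathfrak G}(\CC)$.

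For the bijectivity of this map I would argue directly in the non-Archimedean case. Because a Schwartz function is locally constant near $0$, one has a direct sum of spaces of measures
\[
 \mathcal F(F)\,d^\times x \;=\; \mathcal S(F^\times)\ \oplus\ \CC\cdot \mathbf 1_{\mathfrak o}\,d^\times x ,
\]
the coefficient of $\mathbf 1_{\mathfrak o}$ being $\Phi(0)$. On the first summand, $\Phi\,d^\times x\mapsto Z(\Phi,\cdot,0)$ is the Mellin transform composed with $\chi\mapsto\chi^{-1}$, hence by Theorem~\ref{PW-torus} an isomorphism onto the polynomial, finitely supported functions on $\widehat{F^\times}_\CC$. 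On the second summand a direct computation gives $Z(\mathbf 1_{\mathfrak o},\chi,0)=\vol(\mathfrak o^\times)L(\chi,0)$ on the unramified component and $0$ elsewhere; since there $L(\chi,0)=(1-\chi(\varpi))^{-1}$, Euclidean division of Laurent polynomials by $1-\chi(\varpi)$ shows that the target space --- polynomial multiples of $L(\chi,0)$, finitely supported --- equals the direct sum of the Paley--Wiener space and the line through $L(\chi,0)\mathbf 1_{\mathrm{unram}}$, and comparing decompositions gives the isomorphism. In the Archimedean case the same idea applies, but $\mathcal S(F^\times)$ has no finite-dimensional complement in $\mathcal F(F)\,d^\times x$; instead, given $f$ in the target, I would use Borel's lemma to construct a compactly supported smooth $\Phi_1$ whose Taylor coefficients at $0$ realize the residues of $f$ at all poles of $L$, so that $f-Z(\Phi_1,\cdot,0)$ is entire and rapidly decaying in strips (using here that $L(\chi,0)$ itself decays exponentially in vertical strips, being essentially a $\Gamma$-factor), hence lies in the Paley--Wiener image of $\mathcal S(F^\times)$; surjectivity follows, and injectivity is Mellin inversion in all cases.

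For the residue formula I would first reduce to a single test function: fix $\Phi_0\in\mathcal F(F)$ with $\Phi_0(0)=1$ and write $\Phi=\Phi(0)\,\Phi_0+\Psi$ with $\Psi(0)=0$. Splitting the zeta integral at $|x|=1$, the tail is entire and near $0$ one has $|\Psi(x)|\ll|x|^{1/2}$ (the function even vanishes near $0$ in the non-Archimedean case), so $Z(\Psi,1,s)$ is holomorphic at $s=0$ and $\Res_{s=0}Z(\Phi,1,s)=\Phi(0)\,\Res_{s=0}Z(\Phi_0,1,s)$. Then one convenient test function suffices: $\Phi_0=\mathbf 1_{\mathfrak o}$ gives $Z(\mathbf 1_{\mathfrak o},1,s)=\vol(\mathfrak o^\times)/(1-q^{-s})$, with residue $\vol(\mathfrak o^\times)/\log q=(1-q^{-1})/\log q=\AvgVol(F^\times)$ in the non-Archimedean case, and a Gaussian reduces the Archimedean case to the classical $\Gamma$-integral, producing the value of $\AvgVol(F^\times)$ recorded in \S\ref{ssMellin}. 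Alternatively --- more uniformly --- the functional equation \eqref{gammaZeta} with $\chi=1$ gives $Z(\Phi,1,s)=\gamma(1,s,\psi)^{-1}Z(\hat\Phi,1,1-s)$; here $Z(\hat\Phi,1,1-s)$ is regular at $s=0$ with value $\int\hat\Phi(x)\,dx=\Phi(0)$ by Fourier inversion, and $\gamma(1,s,\psi)^{-1}=\gamma(1,1-s,\psi)$ (Tate's functional equation is an involution) has a simple pole at $s=0$ of residue $\AvgVol(F^\times)$ by \eqref{AvgVolFtimes}, which gives \eqref{Tateresidue} immediately.

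I expect the one genuinely delicate point to be the Archimedean half of the image description --- matching the nuclear Fr\'echet structures and carrying out the ``subtract the polar part, then apply Paley--Wiener'' step with the precise growth conditions defining $\mathbb H^{\PW}_{\mathfrak G}(\CC)$; everything else is either classical Tate theory or a one-line computation.
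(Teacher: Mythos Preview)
Your argument is correct. The non-Archimedean bijection and the residue formula are handled essentially as the paper intends (the paper simply defers these to ``any reference on Tate's thesis,'' and your direct-sum decomposition $\mathcal F(F)\,d^\times x=\mathcal S(F^\times)\oplus\CC\cdot\mathbf 1_{\mathfrak o}\,d^\times x$ together with Euclidean division in $\CC[\chi(\varpi)^{\pm 1}]$ is a perfectly good way to make this explicit).

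In the Archimedean case your route is genuinely different from the paper's. The paper works geometrically: it considers the multiplication map $T_0\times\RR_{\ge 0}\to F$, identifies $\mathcal F(T_0\times\RR_{\ge 0})=C^\infty(T_0)\hat\otimes\mathcal F(\RR_{\ge 0})$, and observes that Mellin transform carries this \emph{larger} space isomorphically onto $\mathscr C(\widehat{T_0})\hat\otimes\mathbb H^{\PW}_{\mathfrak G}(\CC)$; the subspace $\mathcal F(F)$ is then exactly the set of functions that descend through the (non-injective at $r=0$) multiplication map, and the descent condition---matching Taylor coefficients of the various $T_0$-isotypes at $r=0$---is precisely the condition that the Mellin transform be a holomorphic multiple of $L(\chi,0)$ rather than merely of $\mathfrak G$. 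This gives the Fr\'echet isomorphism in one stroke, as the identification of a closed subspace. Your approach is analytic rather than geometric: you build a preimage by using Borel's lemma to match the polar part and then invoke the Paley--Wiener theorem on $F^\times$ for the entire remainder. This also works, and has the virtue of being closer in spirit to the non-Archimedean argument, but it proves bijectivity first and leaves the topology for a separate step. (That step is not actually delicate: the forward map is visibly continuous, and the open mapping theorem for Fr\'echet spaces then upgrades your linear bijection to a topological isomorphism---so your closing worry about ``matching the nuclear Fr\'echet structures'' is unfounded.)
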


\begin{remark}\label{remarkPWspace}
 We will adopt the convention, both in the Archimedean and non-Archimedean cases, that the above space of functions on $\widehat{F^\times}_\CC$ will be denoted by $\mathbb H^\PW_{L(\bullet,0)}(\widehat{F^\times}_\CC)$.  This notion of ``Paley--Wiener functions'' (or sections) will recur, in a more general context, later in this paper.
\end{remark}

\begin{proof}
 I only sketch the proof for the image of the Tate integral in the Archimedean case; the rest of the results are found in any reference on Tate's thesis. Consider the multiplication map $T_0\times \RR_{\ge 0} \to F$. The Schwartz space of rapidly decaying smooth functions on $T_0\times \RR_{\ge 0}$ can be written 
 $$\mathcal F(T_0\times \RR_{\ge 0}) = C^\infty(T_0)\hat\otimes \mathcal F(\RR_{\ge 0}),$$
 and Mellin transform identifies the second factor on the right with the Fr\'echet space $\mathbb H^\PW_{\mathfrak G}(\CC)$. (The difference in the definition of $\mathfrak G$ is due to the fact that in the complex case we are using the square of the usual norm.) Thus, 
 $$ \mathcal F(T_0\times \RR_{\ge 0}) = \mathscr C(\widehat{T_0})\hat\otimes \mathbb H^\PW_{\mathfrak G}(\CC).$$
 
 The elements of this space which descend to elements of $\mathcal F(F)$ are those belonging to the closed subspace of holomorphic multiples of $L(\chi, 0)$.
\end{proof}

\subsubsection{Fourier convolutions} \label{sssFourierconv}

Let $T$ be a torus, and $V$ a space of measures, functions or half-densities on $T=T(F)$ (with properties to be specified). For any $s\in \CC$, consider the distribution 
\begin{equation} \label{DS} D_s:=|x|^s \psi(x) d^\times x
\end{equation}
on $F^\times$. Any cocharacter $\check\lambda: \Gm\to T$ induces, by push-forward, a distribution $\check\lambda_*D_s$ on $T$. The equivariant Fourier transform $\mathscr F_{\check\lambda,s}$ is defined as the operator of convolution by $\check\lambda_* D_s$, on the given space $V$. More generally, for any character $\chi$ of $T(F)$, we define $\mathscr F_{\check\lambda,\chi, s}$ as the operator of multiplicative convolution by the measure $\check\lambda_*\left(\chi(x)|x|^s \psi(x) d^\times x\right)$; of course, by definition, $\mathscr F_{\check\lambda,\chi, s} = \mathscr F_{\check\lambda,\chi|\bullet|^s, 0}$.

If $V = \mathcal S(T)$, the convolution is convergent. In the general case, we will typically need to regularize it. For example, if $T=F^\times$ and  $V = \mathcal S(F)$ (considered by restriction as measures on $F^\times\subset F$), with $\check\lambda$ the identity cocharacter, we formally have:
$$ \mathscr F_{\check\lambda,s} f (\xi) = \int_{F^\times} |x|^s \psi(x) f(x^{-1} \xi) d^\times x = |\xi|^{-s} \int_{F^\times} |x|^{s-1} f(x^{-1}) \psi(x\xi) dx.$$
As is well-known, the association $x\mapsto |x|^{s-1} f(x^{-1})$ is a finite measure when 
$\Re(s)>-1$, and makes sense by analytic continuation as a tempered distribution for all but a countable set of values of $s$ (corresponding to the poles of a Tate zeta integral). Thus, the multiplicative Fourier convolution above will be interpreted as the Fourier transform of this distribution (valued, again, in distributions, since we have fixed the self-dual measure $dx$). Whenever we say that a multiplicative Fourier convolution should be interpreted ``in a regularized sense'', we will mean as the Fourier transform of a distribution.

As was explained in \cite[Proposition 2.1]{SaHanoi}, multiplicative Fourier convolution acts on Mellin transforms as follows:
 \begin{equation}\label{FE} \widecheck{(\mathscr F_{\lambda , s} f)}(\chi) = \gamma(\chi,\check\lambda,1-s,\psi) \check f(\chi),\end{equation} 
where $\gamma(\chi,\check\lambda,1-s,\psi)$ is the gamma factor \eqref{gammafactor} of the local functional equation for the character $\chi\circ e^{\check\lambda}$.

The equation \eqref{FE} is literally true for $f\in \mathcal S(T)$. For more general spaces of functions, where $\mathscr F_{ {{{\check\lambda}}} , s} f$ will be defined as the Fourier transform of a distribution, it will require some justification.

\subsection{Non-standard test measures for the Kuznetsov formula} 

\subsubsection{Twisted push-forward} \label{sstwistedpf}
Let $G$ be any of the groups in the sequence
$$ \SL_2 \overset{\hookrightarrow}\from\Gm\times\SL_2 \twoheadrightarrow\GL_2\twoheadrightarrow \PGL_2,$$
where the map $\Gm\to \GL_2$ is the canonical map into the center. Let $N$ denote the subgroup of upper triangular unipotent matrices, $\C$ the quotient $N\backslash G\sslash N$, and $\C^0$ its open subset corresponding to the open Bruhat cell.

Let $A$ be the universal Cartan of $G$, identified with the subgroup of diagonal matrices (or $\Gm \times$ that subgroup, in the second case) by choosing the upper triangular Borel subgroup $B$. Let $w=\begin{pmatrix} & -1 \\ 1 \end{pmatrix}$. We identify $A$ with $\C^0$ via the map $a\mapsto [wa]$.

We identify $N\simeq \Ga$ in the usual way, so that $\psi$ becomes a character of $N$. Let $C^\infty((N,\psi^{-1})\backslash BwB/(N,\psi^{-1}))$ the space of smooth functions on the open Bruhat cell which varies by the character $\psi^{-1}$ under left and right multiplication by $N$. The section $$\C^0\simeq  A \ni a \mapsto wa \in BwB$$ 
allows us to identify, by restriction, 
\begin{equation}C^\infty((N,\psi^{-1})\backslash BwB/(N,\psi^{-1})) \simeq C^\infty(\C^0).\end{equation}

Dual to this restriction map, we have well-defined \emph{twisted push-forward maps} of measures,
\begin{equation}\label{pushfsingle} \mathcal S(G)\twoheadrightarrow \mathcal S(N,\psi\backslash G) \xrightarrow{p_!} \Meas(\C^0).
\end{equation}
The image of the last map will be denoted by $\mathcal S(N,\psi\backslash G/N,\psi)$ --- it is the space of standard test measures for the Kuznetsov formula. The last map can also be identified with a $G^\diag$-invariant map (to be denoted by the same symbol)
\begin{equation}\label{pushftwocopies} \mathcal S(N,\psi\backslash G)\hat\otimes \mathcal S(N,\psi^{-1}\backslash G) \xrightarrow{p_!} \mathcal S(N,\psi\backslash G/N,\psi),
\end{equation}
again dual to the pullback of $(N,\psi^{-1})$-equivariant functions, this time through the map 
$$ (N\backslash G)^2\ni (g_1,g_2)\mapsto [g_1g_2^{-1}]\in \C.$$
It is well-known that both the maps \eqref{pushfsingle}, \eqref{pushftwocopies} can be identified with the corresponding coinvariant quotients:
\begin{equation}\label{Whittakercoinvariants} \mathcal S(N,\psi\backslash G/N,\psi) = \mathcal S(G)_{(N,\psi)^2} = \mathcal S(N,\psi\backslash G)_{(N,\psi)} = \left(\mathcal S(N,\psi\backslash G)\hat\otimes \mathcal S(N,\psi^{-1}\backslash G)\right)_{G^\diag}.
\end{equation}
Indeed, the isomorphisms among the various coinvariant spaces follow from the construction of Schwartz spaces on stacks in \cite[\S 3.4]{SaStacks} (with trivial modifications to incorporate the line bundle defined by the character $\psi$), and the  isomorphism with $\mathcal S(N,\psi\backslash G/N,\psi)$ is equivalent to the density of regular orbital integrals for the Kuznetsov formula, which is well-known --- see references in the proof of Theorem \ref{density} in the next section. 

Since the map $N\backslash G\twoheadrightarrow N\backslash G\sslash N$ is smooth,  the untwisted push-forward map: $\mathcal S(N\backslash G)\to \Meas(N\backslash G\sslash N)$ has image in Schwartz measures; in particular, elements of $\mathcal S(N,\psi\backslash G/N,\psi)$ are bounded by Schwartz measures on $\C$, and hence extend to finite measures on $\C$. In reality, we will never use this fact, but it is convenient to refer to elements of $\mathcal S(N,\psi\backslash G/N,\psi)$ as measures on $\C$.

Let $f$ belong to any of the spaces 
$$\mathcal S(G), \, \, \mathcal S(N,\psi\backslash G), \mbox{ or } \, \mathcal S(N,\psi\backslash G)\hat\otimes \mathcal S(N,\psi^{-1}\backslash G),$$ and write it in the form $\Phi dx$, where $\Phi$ is a (Whittaker, in the second and third cases) Schwartz function and $dx$ is an invariant measure on the corresponding space. For compatible choices of invariant measures, we have the integration formula 
\begin{equation}\label{integration}
 p_! f(a) = \delta(a) O_a (\Phi) da,
\end{equation}
where $\delta$ is the modular character of the Borel subgroup, considered as a function on $A\subset \C$, $O_a$ is the orbital integral
$$ O_a(\Phi) = \int_{N\times N} \Phi(n_1 wa n_2) \psi^{-1}(n_1 n_2) dn_1 dn_2$$ 
in the first case, and similarly in the others, and $da$ is a (multiplicative) Haar measure on $A$. 

For a Schwartz function $\Phi$ on $G$ (or one of the other spaces), we define its twisted push-forward by 
$$ p_! \Phi(a) = O_a(\Phi),$$
and for a Schwartz half-density $\varphi = \Phi d^\frac{1}{2}x$ we define
\begin{equation}\label{pushf-densities} p_! \varphi(a) = \delta^\frac{1}{2}(a) O_a(\Phi) d^\frac{1}{2}a.
\end{equation}
These push-forwards depend on the choice of a Haar measure on $N$, which however we have fixed to be the self-dual measure with respect to our character $\psi$. The data $d^\frac{1}{2}x$, $d^\frac{1}{2}a$ are then proportional to each other by the integration formula \eqref{integration}, and hence these twisted push-forwards are determined by the choice of Haar measure on $N$. The image of the space of Schwartz Whittaker half-densities under the twisted push-forward will be denoted by 
$$\mathcal D(N,\psi\backslash G/N,\psi).$$
Note that these are \emph{densely defined} half-densities on $\C$; more precisely, they are defined on the open subset $\C^0\simeq A$.

Finally, I mention that for $G=\PGL_2$ or $\SL_2$ we will be identifying the space $\C$ with $\Ga$ by choosing the coordinate on $A\subset\C$ which for $\PGL_2$ is the positive root character, and for $\SL_2$ is the positive half-root character. Thus, explicitly, we have identified $\Gm$ as a subset of $\C$, and moreover we have a section obtained from the embedding $A\mapsto wA$, as follows:
\begin{eqnarray}\label{sections}\nonumber\mbox{For $\PGL_2$: }\tilde \xi = w e^{\frac{\check\alpha}{2}}(\xi) = \begin{pmatrix} & -1 \\ \xi \end{pmatrix},\,\, \xi\in \Gm;\\
\mbox{For $\SL_2$: }\tilde\zeta = w e^{\check\alpha}(\zeta) = \begin{pmatrix} & -\zeta^{-1} \\ \zeta \end{pmatrix},\,\, \zeta\in \Gm.\end{eqnarray}

\subsubsection{An invariant formulation} \label{ssinvariant}

It will be necessary to have a more invariant description of the Whittaker model and its Kuznetsov quotient.

Let $V$ be a two-dimensional vector space, and $G=\SL(V)$. We take the action of $G$ on $V$ to be a \emph{right} action. 
 
Let $V^\vee$ denote the dual vector space, and $\tilde V  = \{ (v, v^\vee)\in V\times V^\vee| \left<v,v^\vee\right>=1\}$. Then $\tilde V$ is a torsor over $V^*:=V\smallsetminus\{0\}$ for the group scheme $\mathbb S$ of stabilizers, in $G$, of the points of $V^*$ (and similarly over $V^{\vee *} := V^\vee\smallsetminus\{0\}$).
 
The action of $G$ on this group scheme exhibits it as a constant group scheme, and we can fix a $G$-equivariant isomorphism $\mathbb S\simeq \Ga \times V^*$. We will call such an isomorphism a \emph{Whittaker structure} for $V$. Equivalently, this turns $\tilde V$ into a $G$-equivariant $\Ga$-torsor over $V$.

One way to fix such a structure, is to endow $V$ with an invariant symplectic form $\omega$. This defines an isomorphism $\iota_\omega: V\xrightarrow\sim V^\vee$ by $\left<u,\iota_\omega(v)\right> = \omega(u,v)$, the space $\tilde V$ becomes the space of pairs $(v,w)\in V^*\times V^*$ with $\omega(v, w)=1$, and the $\Ga$-action on $\tilde V$ is given by 
\begin{equation}\label{Gaaction}
x\cdot (v, w) = (v, w - xv).
\end{equation}
 
It is immediate to see that this is a bijection between the sets of
 \begin{itemize}
  \item Whittaker structures on $V^*$, and 
  \item non-zero alternating forms on $V$.
 \end{itemize}

Any two choices of a Whittaker structure are conjugate by a \emph{unique} $G$-automorphism (i.e., scalar automorphism) of $V$; in that sense, the resulting $\Ga$-bundle $\tilde V\to V^*$ is rigid.  

From this point on, we fix a Whittaker structure, and use it to view $\tilde V$ as a subvariety of $V^*\times V^*$, so we have two projection maps $\tilde V\underset{t}{\overset{s}\rightrightarrows} V^*$.
 
The name ``Whittaker structure'' is due to the fact that, 
through the additive character $\psi$ of $F$, a $\Ga$-bundle induces a ($\CC^\times$-bundle and hence a) complex line bundle $\mathcal L_\psi$ on the $F$-points of $V^*$, whose sections are the Whittaker functions for $G$. The Whittaker line bundle $\mathcal L_\psi$ comes with a trivialization of its pullback to $\tilde V$, arising from the canonical isomorphism (obtained from the projection and action maps): $\Ga\times \tilde V\simeq \tilde V\times_V \tilde V$. Explicitly, Whittaker functions are functions on $\tilde V$ which satisfy
$$ \Phi(v,u- xv) = \psi(x) \Phi(v,u).$$

If we choose the symplectic form $\omega$ and coordinates $(x,y)$ on $V$ so that the symplectic form is $\omega = dx \wedge dy$, the distinguished $G$-orbit on $V^*\times V^*$ is that of the ordered pair $((1,0), (0,1))$. The stabilizer $N^-$ of $(1,0)$ is identified with $\Ga$ by 
$$ x\mapsto \begin{pmatrix} 1 &  \\ x & 1 \end{pmatrix},$$
and this identifies its orbit through $(0,1)$ as a $\Ga$-torsor by $x\cdot (y,1) = (-x+y,1)$. If we identify an ordered pair $((a,b),(c,d))$ with the element $\begin{pmatrix} a & b \\ c & d \end{pmatrix} \in \SL_2$, Whittaker functions are functions on $\SL_2$ which satisfy
$$ \Phi \left( \begin{pmatrix} 1 \\ x & 1 \end{pmatrix}  g\right) = \psi^{-1}(x) \Phi(g).$$
The function $\Phi'(g) = \Phi(w^{-1} g)$ is then a Whittaker function which transforms under the character $\psi$ of the upper triangular subgroup $N\simeq \Ga$, as before, and the translation from $\Phi'$ back to the abstract description of the function $\Phi$ is that $\Phi'\begin{pmatrix} a & b \\ c & d \end{pmatrix} = \Phi((c,d),(-a,-b))$. 

The section of the map $G\to \C$ (over the open $\C^0$), which allowed us to define a twisted push-forward in the previous subsection, now admits the following description; more precisely, let us describe the section giving rise to \eqref{pushftwocopies}, which now can be written as a map 
\begin{equation}\label{pushftwocopies-canonical} \mathcal S(V^*,\mathcal L_\psi) \otimes \mathcal S(V^*,\mathcal L_{\psi^{-1}}) \to \mathcal S(N,\psi\backslash G/N,\psi).
\end{equation}
Since elements of $\mathcal S(V^*,\mathcal L_{\psi^{\pm 1}})$ are scalar-valued functions on $\tilde V$, 
we consider the maps 
$$ \tilde V \times \tilde V \to V^*\times V^*\to \C$$
and will describe a distinguished $G$-orbit on $\tilde V\times \tilde V$ that can be used to trivialize push-forwards.

First of all, we have an isomorphism, which can be taken as the definition 
$$ \C = V\times V \sslash G^\diag.$$
Moreover, the map $(v_1, v_2)\mapsto \omega(v_1, v_2)$ identifies $\C \simeq \Ga$. (This is compatible with the map $\Gm \to \C$ obtained from \eqref{sections}).

Now, over $\C^0=\C\smallsetminus\{0\}$ we have a distinguished $G$-stable subset $\widetilde{\mathbb V}\subset \tilde V\times \tilde V$, consisting of those pairs $(v_1, w_1)$ and $(v_2, w_2)$ such that $w_1$ and $v_2$ are colinear, and $v_1$ and $w_2$ are colinear. The map $\widetilde{\mathbb V} \to V^*\times V^*$, $(v_1,w_1,v_2,w_2)\mapsto (v_1,v_2)$, is an isomorphism, and it allows us to pull back $\mathcal L_\psi\boxtimes \mathcal L_{\psi^{-1}}$-valued measures on $V^*\times V^*$ to actual measures on $\widetilde{\mathbb V}$, and push them forward to $\C$. This gives rise to the twisted push-forward  \eqref{pushftwocopies-canonical}.

\subsubsection{Generating series for unramified $L$-values}

Assume here that $F$ is a non-Archimedean field, with ring of integers $\mathfrak o$ and residual degree $q$. Let $K=G(\mathfrak o)$, and $\mathcal H(G,K)$ the Hecke algebra of $K$-biinvariant, compactly supported measures on $G$.

Unramified characters of the universal Cartan $A$ of $G$ are in canonical bijection with points of the complex dual torus $\check A$; we will denote this bijection as $\chi \leftrightarrow \check \chi$. It is characterized by $\chi({\check\lambda}(\varpi)) = {\check\lambda}(\check\chi)$ for any coweight $\check\lambda$ into $A$, and $\varpi$ a uniformizer in $\mathfrak o$.
The Satake isomorphism 
$$ \mathcal H(G,K)\ni h \mapsto \check h \in \CC[\check G]^{\check G} = \CC[\check A]^W$$
is characterized by the property that, for the principal series representation $\pi_\chi$ obtained by normalized induction from the character $\chi$ of a Borel subgroup $B$ (through the quotient $B\twoheadrightarrow A$), we have 
$\pi_{\chi}(h) v_{K,\chi} = \check h(\check\chi) v_{K,\chi}$. 

If $X$ is a smooth, quasi-affine $G$-space over $\mathfrak o$, we call the characteristic function $1_{X(\mathfrak o)}$ of $X(\mathfrak o)$ the \emph{basic function} of $X$. We would like to focus on the Whittaker model, so we will use $X$ to denote the ``space'' $X = (N,\psi)\backslash G$,  that is, the space $N\backslash G$, but endowed with the complex line bundle defined by the character $\psi$. In that case, the basic function (still to be denoted by $1_{X(\mathfrak o)}$ will be the left-$(N,\psi)$-equivariant function on $G$ which is supported on $NK$ and equal to $1$ on $K$. 

Let $r: \check G\to \GL(V)$ be an algebraic representation, and $s\in \CC$. The \emph{local unramified $L$-value} 
$L(r,s)$
is the element 
$$L(r,s):= \frac{1}{\det(I-q^{-s}r)} \in  \CC(\check G)^{\check G}.$$ 
It can be written as a formal Taylor series in $q^{-s}$:
$$L(r,s) = \sum_{n=0}^\infty q^{-ns} \tr(\Sym^n r).$$
If $r$ is reducible, $r = \bigoplus_{i=1}^m r_i$, we can allow $s$ to denote an $m$-tuple $(s_i)_{i=1}^m$ of complex numbers, and define $L(r,s)=\prod_{i=1}^m L(r_i,s_i)$. I proceed with a single $s$, and the adjustments for the general case are obvious.

The \emph{generating series of the local $L$-value} $L(r,s)$ on $X$ is the Whittaker function
\begin{equation}\label{Lseries}\Phi_{L(r,s)} = \sum_{n=0}^\infty q^{-ns} h_{\Sym^n r} \star 1_{X(\mathfrak o)},
\end{equation}
whenever this series converges, where $h_{\Sym^n r}\in \mathcal H(G,K)$ is the element with 
$$ \check h_{\Sym^n r} = \tr(\Sym^n r).$$

It appears more appropriate ensure that there is a character $\partial: G\to \Gm$, whose dual $\partial^*: \Gm\to \mathcal Z(\check G)$, followed by $r$, gives rise to the canonical cocharacter to the center of $\GL(V)$. For example, when $G=\PGL_2$, the standard representation of $\check G= \SL_2$ should be extended to the group $\GL_2$, which corresponds to replacing $G$ by $\GL_2$. Similarly, for the symmetric-square representation of $\SL_2$, which factors through the adjoint representation of $\PGL_2$, one should take $\Gm\times\PGL_2$ to be the dual group, so that $G=\Gm \times \SL_2$. In that case, the complex parameter $s$ of the $L$-value becomes a red herring, and can be fixed to be $0$ (or $\frac{1}{2}$), since we have an equality of rational functions on $\check G$:
$$ L(r, s)(x) = L(r, 0) (x\partial^*(q^{-s})).$$
For Langlands $L$-functions of representations, this is the statement that $L(\pi, r, s) = L(\pi\otimes |\partial|^s, r, 0)$.

Moreover, in that case, the series \eqref{Lseries} makes sense for every $s$, since the $n$-th summand is compactly supported on the subset with $\val (\partial )= n$.

For our present purposes, however, we would like to allow ourselves to take \eqref{Lseries} in $\SL_2$ or $\PGL_2$, which corresponds to integrating it over the fibers of the map $\Gm\times\SL_2\to\SL_2$ or $\GL_2\to\PGL_2$. This integral converges when $\Re(s)\gg 0$; in the examples of interest, it will follow from the results that we prove that it admits meromorphic continuation to all $s$, rational in the parameter $q^{-s}$. 

Finally, we would like to work with measures instead of functions. For that purpose, we choose the invariant measure $dx$ on $N\backslash G$ such that $\Vol(N\backslash G(\mathfrak o))=1$. The product $\Phi_{L(r,s)} dx$ will be called the ``generating measure of $L(r,s)$''. 

\subsubsection{Non-standard test measures} \label{ssnonstandard}

Now let $G$ be one of the four groups above, and consider the twisted push-forward $p_!:\mathcal S(N,\psi\backslash G)\to \mathcal S(N,\psi\backslash G/N,\psi)$ of \S \ref{sstwistedpf}. It is easy to see that, restricted to $K$-invariants, where $K = G(\mathfrak o)$, it is locally finite, in the sense that for any $c\in \C = N\backslash G\sslash N$ there is only a finite number of $K$-orbits on $N\backslash G$ such that the elements of $\mathcal S(N,\psi\backslash G)^K$ which are supported on those $K$-orbits have non-zero push-forward in a neighborhood of $c$, cf.\ \cite[\S 6.3]{SaBE1}.\footnote{There is a typo on the last line of \cite[(6.2)]{SaBE1}: $m=1$ should read $m=0$. The property of local finiteness of the push-forward explained here is not special to $K$ --- it holds for invariants of any compact open subgroup.} Thus, we can extend the twisted push-forward to \emph{any} $K$-invariant Whittaker measure. In particular, the twisted push-forward of $\Phi_{L(r,s)} dx$ is well-defined whenever $\Phi_{L(r,s)}$ is, and will be denoted by 
$$ f_{L(r,s)} \in \Meas(N,\psi\backslash G/N,\psi).$$ 
Explicitly, using the integration formula \eqref{integration}, we have 
$$ f_{L(r,s)}(a) = (1-q^{-2})^{-1} \left(\int_N \Phi_{L(r,s)}(wan) \psi^{-1}(n) dn\right) \cdot \delta(a) da,$$
when the measure on $N(\mathfrak o)$ is taken to be $1$, and $da(A(\mathfrak o))=(1-q^{-1})$; indeed, the Haar measure on $G$ which gives total mass $1$ to $G(\mathfrak o)$ restricts on the open Bruhat cell $Nw A N$, in coordinates $n_1wan_2$, to the measure $(1-q^{-2})^{-1} dn_1 \delta(a) da dn_2$, and $da$ is the Haar measure with $da(A(\mathfrak o))=1-q^{-1}$.

Now we specialize to $G=\PGL_2$ and $r=$copies of the standard representation of $\check G=\SL_2$, or $G=\SL_2$ and $r=$copies of the adjoint representation of $\check G=\PGL_2$. So, we write $r = \bigoplus_i r_i$ with all $r_i$'s equal to the standard representation (for $\PGL_2$) or the adjoint representation (for $\SL_2$), and correspondingly $s=(s_i)_i$ denotes a collection of complex numbers, one for every $r_i$. Then $f_{L(r,s)} $ lives in a natural space of test measures 
$$ \mathcal S^-_{L(r,s)} (N,\psi\backslash G/N,\psi) = \mathcal S^-_{\prod_i L(r_i,s_i)} (N,\psi\backslash G/N,\psi)$$
for the Kuznetsov formula, described as follows; here, we allow again the field $F$ to be Archimedean:

We let $ \mathcal S^-_{L(r,s)} (N,\psi\backslash G/N,\psi)$ be the space of measures on $\mathfrak C$ which on any compact set coincide with elements of $\mathcal S(N,\psi\backslash G/N,\psi)$, while in a neighborhood of infinity, in the coordinates of \eqref{sections}, when all of the $s_i$'s are \emph{distinct}, they are of the form  
 \begin{equation}\label{expansionSL2}
  \sum_i C_i(\zeta^{-1}) |\zeta|^{1-s_i} d^\times \zeta,
 \end{equation}
in the case of $G=\SL_2$, and
 \begin{equation}\label{expansionPGL2}
  \sum_i C_i(\xi^{-1}) |\xi|^{\frac{1}{2}-s_i} d^\times \xi,
 \end{equation}
in the case of $G=\PGL_2$, where the $C_i$'s are smooth functions in a neighborhood of zero. When two or more of the $s_i$'s coincide with some complex number $s_0$, the corresponding summands at infinty will be replaced by $(C_1(\zeta^{-1}) + C_2(\zeta^{-1})\log|\zeta| + C_3(\zeta^{-1}) \log^2|\zeta| +\dots) |\zeta|^{1-s_0} d^\times \zeta$ (as many summands as occurences of the exponent $s_0$), and similarly for $\PGL_2$. 

More generally, to accommodate possible push-forwards from the groups $\GL_2$ and $\Gm\times \SL_2$ to $\PGL_2$, resp.\ $\SL_2$, we can define, for any collection of characters $\chi=(\chi_i)_i$ of $\Gm$, a space 
$$ \mathcal S^-_{L(r,\chi)} (N,\psi\backslash G/N,\psi)$$
whose elements 
coincide with elements of $\mathcal S(N,\psi\backslash G/N,\psi)$ away from infinity, and are of the form 
 \begin{equation}\label{expansionSL2-chi}
  \sum_i C_i(\zeta^{-1}) |\zeta| \cdot \chi_i^{-1}(\zeta) d^\times \zeta,
 \end{equation}
in the case of $G=\SL_2$, and
 \begin{equation}\label{expansionPGL2-chi}
  \sum_i C_i(\xi^{-1}) |\xi|^{\frac{1}{2}} \cdot \chi_i^{-1}(\xi) d^\times \xi,
 \end{equation}
with the analogous modifications when some of the $\chi_i$'s coincide.
 
The following was stated as \cite[Proposition 3.2]{SaHanoi}, and can be proven as in \cite[Lemma 5.3]{SaBE1}:

\begin{proposition}
If $F$ is non-Archimedean and $\Phi_{L(r,s)} dx$ is well-defined (for example, when $\Re(s)\gg 0$), its image $ f_{L(r,s)} $ is contained in $\mathcal S^-_{L(r,s)} (N,\psi\backslash G/N,\psi)$.   
\end{proposition}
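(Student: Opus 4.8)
The claim is local and can be checked on the level of Hecke-eigenfunctions, so the plan is to reduce to an explicit computation of the twisted push-forward of $\Phi_{L(r,s)}\,dx$ near the point at infinity of $\mathfrak C = N\backslash G\sslash N$. First I would recall, following \S\ref{ssnonstandard}, that the twisted push-forward is locally finite on $K$-invariants; hence to understand $f_{L(r,s)}$ on a neighborhood of $\infty\in\mathfrak C$ it suffices to understand the push-forward of finitely many terms of the series \eqref{Lseries} on each Bruhat double coset, and then to understand the ``tail'' behavior as the relevant double coset recedes to infinity. Concretely, for $G=\SL_2$ in the coordinate $\zeta$ of \eqref{sections}, the point $\zeta=\infty$ corresponds, under $N\backslash G\sslash N$, to the Bruhat cell complementary to the big cell, and the $K$-orbits on $N,\psi\backslash G$ whose push-forwards are supported near $\zeta=\infty$ are precisely those indexed by the coweight $\val(a)\to\infty$; the function $\Phi_{L(r,s)}$ on these orbits is governed by the Casselman--Shalika formula.

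The core computation is therefore: evaluate $p_!\left(h_{\Sym^n r}\star 1_{X(\mathfrak o)}\,dx\right)(a)$ for $a$ with large valuation, sum over $n$ weighted by $q^{-ns}$, and read off the resulting asymptotic expansion in $|\zeta|$. By the Casselman--Shalika formula, the unramified Whittaker function evaluated against the diagonal torus is (up to $\delta^{1/2}$) a character of $\check A = \check T$ twisted by the Weyl-denominator-corrected sum; convolving with $h_{\Sym^n r}$ multiplies the Satake/Mellin transform by $\tr(\Sym^n r)(\check\chi)$, and summing the geometric-type series in $n$ produces exactly the local $L$-factor $L(r,s)$ evaluated on the relevant torus point. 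Transcribing this back through the Mellin transform on $A\simeq\Gm$ — i.e. writing the measure on $\mathfrak C^0\simeq\Gm$ whose Mellin transform is (a polynomial multiple of) $L(r_i,s_i)$ in the variable dual to $\zeta$ — gives, via the standard dictionary between poles of $L$-factors and the asymptotic shape of a measure near $\infty$ (the same mechanism as in Proposition \ref{Tateimage}), precisely a sum of terms $C_i(\zeta^{-1})|\zeta|^{1-s_i}d^\times\zeta$, with logarithmic corrections $\log^k|\zeta|$ appearing exactly when $s_i$'s collide (higher-order poles), i.e. the expansions \eqref{expansionSL2}, \eqref{expansionPGL2}; the case with general characters $\chi_i$ is identical with $|\zeta|^{1-s_i}$ replaced by $|\zeta|\chi_i^{-1}(\zeta)$. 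The fact that away from infinity $f_{L(r,s)}$ agrees with an element of $\mathcal S(N,\psi\backslash G/N,\psi)$ is immediate from local finiteness: on a compact subset of $\mathfrak C$ only finitely many summands of \eqref{Lseries} contribute, and each is a finite Hecke-translate of the basic function, hence lies in $\mathcal S(N,\psi\backslash G)$, whose push-forward is a standard Kuznetsov test measure.

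This is essentially the argument of \cite[Lemma 5.3]{SaBE1}, so I would organize the proof as a reduction to that lemma: identify $\Phi_{L(r,s)}$ as (the $\Re(s)\gg0$ convergent sum of) Hecke translates of the basic Whittaker function, apply the Casselman--Shalika formula to compute its restriction to $wa$, $a\in A$, and then invoke the Mellin/$L$-factor dictionary to extract the behavior at $\infty$. The main obstacle — and the only place real care is needed — is bookkeeping the confluence of exponents: when several $s_i$ (or $\chi_i$) coincide, the product $\prod_i L(r_i,s_i)$ acquires a pole of order equal to the multiplicity, and one must verify that the inverse Mellin transform of such a function is exactly a linear combination $\sum_k C_k(\zeta^{-1})\log^k|\zeta|\,|\zeta|^{1-s_0}d^\times\zeta$ with smooth $C_k$ and no extraneous terms, matching the stated form \eqref{expansionSL2}. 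Everything else — convergence for $\Re(s)\gg0$, local finiteness, agreement with standard test measures on compacta — is routine given the tools already assembled in \S\ref{sec:background}.
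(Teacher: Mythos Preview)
Your proposal is correct and aligns with the paper's approach: the paper does not give an independent proof but simply cites \cite[Lemma 5.3]{SaBE1}, exactly the reference you identify and whose argument you have accurately sketched. The worked Example that follows the proposition in the paper (for $G=\SL_2$, $r=\Ad$) carries out essentially the computation you describe --- expressing $\Phi_{L(r,s)}$ via a generating series in the coweight lattice, computing the push-forward of each term explicitly, and reading off the $|\zeta|^{1-s}d^\times\zeta$ asymptotic --- so your plan is a faithful expansion of what the paper intends.
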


We will call $f_{L(r,s)}$ the \emph{basic vector} of $\mathcal S^-_{L(r,s)} (N,\psi\backslash G/N,\psi)$.

\begin{example}
 Let $G=\SL_2$, considered as a reductive group over $\mathfrak o$, $K=G(\mathfrak o)$, $dx$ the invariant measure on $N\backslash G$ with $dx(N\backslash G(\mathfrak o))=1$, and take $r=\Ad$. Let $\Lambda^+= - \mathbb N \check\alpha$ be the set of anti-dominant elements in the coweight lattice of the universal Cartan $A$, and, for every $\check\lambda\in \Lambda^+$, let $e^{\check\lambda}$ stand for the element in $\mathcal S(N,\psi\backslash G)^K$ which is equal to the product of $dx$ by the Whittaker function which is 
 zero of the coset $N e^{-\check\lambda}(\varphi) K$, and equal to $q^{\left<\rho,\check\lambda\right>}$ on $e^{-\check\lambda}(\varphi)$. We use rational functions of the form $\frac{1}{1-q^{-s}e^{\check\lambda}}$ to denote the series $\sum_{i\ge 0} q^{-is} e^{i\check\lambda}$, but caution that $\frac{1}{1-q^{-s}e^{\check\lambda}}$ and $\frac{-q^s e^{-\check\lambda}}{1-q^s e^{-\check\lambda}}$ denote different series.
 
 Then, \cite[Theorem 7.7]{SaSatake} states that, for $\Re(s)\gg 0$, the element $\Phi_{L(r,s)} dx$ coincides with the restriction to $\Lambda^+$ of the ``series''
 \begin{equation}\label{basicadjoint} \frac{1-e^{\check\alpha}}{(1-q^{-s} e^{\check\alpha})(1-q^{-s} )(1-q^{-s} e^{-\check\alpha})}.
 \end{equation}
 Moreover, \cite[(6.2)]{SaBE1} computes the orbital integrals of the Whittaker function $\frac{q^{-\left<\rho,\check\lambda\right>}e^{\check\lambda}}{dx}$; we multiply by 
 $$q^{\left<\rho,\check\lambda\right>}\cdot (1-q^{-2})^{-1} \delta(a) da = q^{\left<\rho,\check\lambda\right>}\cdot (1-q^{-2})^{-1} |\zeta|^2 d^\times \zeta,$$ 
 to deduce that the twisted push-forward of $e^{\check\lambda}$, when $\check\lambda \ne 0$, is equal to the measure 
 $$q^{\left<\rho,\check\lambda\right>}\cdot (1-q^{-2})^{-1} |\zeta|^2 d^\times \zeta \cdot \left(1_{|\zeta|=q^{-\left<\rho,\check\lambda\right>}}- 1_{|\zeta|=q^{-\left<\rho,\check\lambda\right>-1}}\right)=$$
 $$= (1-q^{-2})^{-1} |\zeta| d^\times \zeta \cdot \left(1_{|\zeta|=q^{-\left<\rho,\check\lambda\right>}}- q^{-1} \cdot 1_{|\zeta|=q^{-\left<\rho,\check\lambda\right>-1}}\right).$$ 
 We can let $\epsilon^{\check\lambda}$ denote the restriction of the measure $|\zeta| d^\times\zeta$ to the set $|\zeta|=q^{-\left<\rho,\check\lambda\right>}$, then the twisted push-forward of  $e^{\check\lambda}$ reads 
 $$ (1-q^{-2})^{-1} (\epsilon^{\check\lambda} - q^{-1} \epsilon^{\check\lambda + \check\alpha}),$$
 and the twisted push-forward of the series \eqref{basicadjoint}, restricted to the set $|\zeta|>1$, will be equal to the ``series''
 \begin{equation}\label{basicadjointpushf}  (1-q^{-2})^{-1} \frac{(1-\epsilon^{\check\alpha})(1-q^{-1}\epsilon^{\check\alpha})}{(1-q^{-s} \epsilon^{\check\alpha})(1-q^{-s} )(1-q^{-s} \epsilon^{-\check\alpha})}
 \end{equation}
 restricted to negative multiples of $\check\alpha$.
 
 The series $\frac{1}{1-q^{-s} \epsilon^{-\check\alpha}}$, just by itself, is equal to the measure $|\zeta|^{1-s} d^\times\zeta$. I leave it to the reader to check that the asymptotics of \eqref{basicadjointpushf} are now obtained by setting $\epsilon^{\check\alpha}=q^{-s}$ in the remaining factors; we find that $f_{L(\Ad, s)}$ is equal to 
 \begin{equation}\label{asymptoticsbasicadjoint}
  \frac{1-q^{-1-s}}{(1-q^{-2})(1-q^{-2s})} |\zeta|^{1-s} d^\times\zeta
 \end{equation}
for large $|\zeta|$.
 
\end{example}

We will also work with half-densities, thus we similarly define a space $\mathcal D^-_{L(r,s)} (N,\psi\backslash G/N,\psi)$ which contains $\mathcal D(N,\psi\backslash G/N,\psi)$. In this case, the analogous to \eqref{expansionSL2}, \eqref{expansionPGL2} expansions at infinity are:
\begin{equation}\label{expansionSL2-densities}
  \sum_i C_i(\zeta^{-1}) |\zeta|^{-s_i} (d^\times \zeta)^\frac{1}{2},
 \end{equation}
in the case of $G=\SL_2$, and
 \begin{equation}\label{expansionPGL2-densities}
  \sum_i C_i(\xi^{-1}) |\xi|^{-s_i} (d^\times \xi)^\frac{1}{2},
 \end{equation}
by comparing \eqref{integration} and \eqref{pushf-densities}. The \emph{basic vector} of these spaces, in the non-Archimedean case, will be the quotient of the measure $f_{L(r,s)} $ by $\delta^\frac{1}{2}(a) d^\frac{1}{2}a$, that is, by $|\zeta| (d^\times \zeta)^\frac{1}{2}$ in the case of $\SL_2$ and $|\xi|^\frac{1}{2}(d^\times \xi)^\frac{1}{2}$ in the case of $\PGL_2$. It will again be denoted by $f_{L(r,s)} $, when it is clear that we are referring to half-densities, instead of measures.

Finally, we mention that the regular action of the unramified Hecke algebra $\mathcal H(G,K)$ on $\mathcal S(N,\psi\backslash G)^K$ descends to its image in $\mathcal S(N,\psi\backslash G/N,\psi)$; indeed, the action of $\mathcal H(G,K)$ coincides with the action of the unramified component of the Bernstein center, which descends to the coinvariant space $\mathcal S(N,\psi\backslash G)_{(N,\psi)} = \mathcal S(N,\psi\backslash G/N,\psi)$. Thus, we will feel free to write $h\cdot f$ for $h\in \mathcal H(G,K)$ and $f\in \mathcal S(N,\psi\backslash G/N,\psi)$ in the image of $\mathcal S(N,\psi\backslash G)^K$  (or a series of such elements).

\section{Scattering theory} \label{sec:scattering}

A main theme of the present paper is the comparison between transfer operators involving such a variety $X$, and transfer operators involving its \emph{boundary degeneration} or \emph{asymptotic cone} $X_\emptyset$, as horospherical $G$-space which is, roughly, responsible for the continuous spectrum of $X$.

\subsection{Asymptotic cone}

The asymptotic cone can be defined using coordinate rings: Suppose that $X$ is quasi-affine, and $k[X]= \bigoplus_{\lambda} V_\lambda$ \emph{as a $G$-module}, a multiplicity-free direct sum of irreducible submodules $V_\lambda$, with highest weight $\lambda$ varying over some submonoid $ \Lambda_X^{++}$ of the character group of the universal Cartan $A$ of $G$. Notice that, fixing a Borel subgroup, for two highest-weight vectors $v_\lambda \in V_\lambda$, $v_\mu\in V_\mu$ we have 
\begin{equation}\label{highestweight}v_\lambda\cdot v_\mu = v_{\lambda+\mu}\end{equation} for some highest weigt vector $v_{\lambda+\mu}\in V_{\lambda+\mu}$, but in general $V_\lambda \cdot V_\mu\nsubset V_{\lambda+\mu}$. We define $k[X_\emptyset^a] := \bigoplus_{\lambda} V_\lambda$ \emph{as an algebra}, where the algebra structure is defined by projecting the product of $V_\lambda$ and $V_\mu$ to the direct summand $V_{\lambda+\mu}$, and take $X_\emptyset$ to be the open $G$-orbit in $X_\emptyset^a$. For the Whittaker case, we define the boundary degeneration by retaining the same space $N\backslash G$, but making the character on $N$ trivial.

Here is a list of the spaces $X$ that we are using in this paper, and the isomorphism classes of their asymptotic cones:
\begin{equation}\label{tableX} \begin{array}{|c|c|c|}
  \hline G & X & X_\emptyset \\
  \hline
  \PGL_2, \SL_2, \GL_2 \mbox{ or } \Gm\times \SL_2 & (N,\psi)\backslash G & N\backslash G \\
  \PGL_2 & \Gm\backslash \PGL_2 & N\backslash \PGL_2 \\
  \SL_2^2/\{\pm 1\}^\diag\simeq \SO_4 & \SL_2 & T^\diag(N\times N^-)\backslash \SL_2^2\\
  \hline
 \end{array}
\end{equation}

In the last case, $N$ and $N^-$ are the unipotent radicals of two opposite Borel subgroups of $\SL_2$, and $T$ denotes their intersection. In that case, $X_\emptyset$ can be identified with the variety of $2\times 2$-matrices of rank one.  

In every case, the asymptotic cone $X_\emptyset$ has an action of a torus $A_X$ of $G$-automorphisms; the character group of $A_X$ is generated by the monoid $\Lambda_X^{++}$ of weights appearing in the highest weight decomposition above, and the action is equivalent to the grading of the coordinate ring. Notice, however, that \eqref{highestweight} translates to a \emph{canonical} isomorphism 
\begin{equation}\label{samehoro} X\sslash N_G = X_\emptyset \sslash N_G,
\end{equation}
where $N_G$ is the unipotent radical of a Borel subgroup of $G$. This identifies general $N_G$-orbits on $X$ and $X_\emptyset$, and rigidifies $X_\emptyset$, in the sense that the action of a non-trivial element of $A_X$ would not preserve this isomorphism.  

In the examples above, for the Whittaker model and the variety $\Gm\backslash \PGL_2$, we have $A_X = A_G$, the Cartan of $G$, while for $X=\SL_2$ we have $A_X=$ the Cartan $A$ of $\SL_2$. There is a canonical finite reflection group $W_X$ acting on $A_X$, the \emph{little Weyl group} of $X$; for the examples above, this Weyl group is isomorphic to $\Z/2$.  

By definition, the action of $A_X$ on $X_\emptyset$ is compatible with its action on $X_\emptyset\sslash N_G = X\sslash N_G$. Unfortunately, this creates some confusion for a variety such as $X=N_G\backslash G$, because $A_X = A_G$ in this case is also identified with the quotient $B_G/N_G$, but \emph{this is not the way it acts on $X$}; indeed, it would be preferable to represent $X$ as $N_G^-\backslash G$, where $N_G^-$ is \emph{opposite to the Borel subgroup used to define the Cartan $A_G$}; equivalently, if $x\in X$ is a point with stabilizer $N_G$, \emph{the quotient $A_G=B_G/N_G$ acts on $X$ by twisting the natural action by $w_0$, the longest element of the Weyl group}. To avoid excessive notation, we will usually be denoting a maximal unipotent stabilizer by $N_G$ or $N$, not by $N_G^-$, \textbf{but the reader should keep this convention about the $A_X$-action in mind}, to avoid confusion. 

Moreover, for a space of the form $X=S\backslash G$, where $N_G\subset S \subset \ker(e^{2\rho}) \subset G$, where ${2\rho}$ is the sum of positive roots, so that $X$ admits a $G$-invariant measure $dx$, we \emph{define a normalized action of $A_X = (B_G/S)^{w_0}$ on functions and measures on $G$}, as follows:
\begin{equation}\label{action-normalized-functions}
 (a\cdot \Phi)(x) = \delta^{\frac{1}{2}}(a) \Phi( a\cdot x)
\end{equation}
on functions, and 
\begin{equation}\label{action-normalized-measures}
 (a\cdot \mu)(x) = \delta^{-\frac{1}{2}}(a) \mu( a\cdot x)
\end{equation}
on measures, where $\delta = |e^{2\rho}|$ is the modular character of $B_G$. This action is unitary on the $L^2$-spaces of functions or measures. On half-densities, no normalization is needed.

\subsection{Scattering operators}

From now on until the end of this section, let $F$ be non-Archimedean. I describe some of the results of \cite{DHS}. For each of the varieties $X$ of Table \eqref{tableX}, there is a space $\mathcal S^+(X_\emptyset)$ of smooth measures on $X_\emptyset$, with an $A_X$-semilinear action of $W_X$ by $G$-automorphisms, the \emph{scattering morphisms} 
$$\mathfrak S_w: \mathcal S^+(X_\emptyset) \xrightarrow\sim \mathcal S^+(X_\emptyset),$$
such that $\mathcal S^+(X_\emptyset)$ is generated by $\mathcal S(X_\emptyset)$ under these scattering morphisms. The support of elements of $\mathcal S^+(X_\emptyset)$ has compact closure in the affine variety $X_\emptyset^a$ that we saw above. Moreover, there is a 
canonical ``asymptotics'' morphism
\begin{equation}\label{asymptotics} e_\emptyset^*: \mathcal S(X)\to \mathcal S^+(X_\emptyset),
\end{equation}
defined in \cite[Section 5]{SV} which, according to the Paley--Wiener theorem \cite[Theorem 1.8]{DHS} has image precisely in the subspace of $W_X$-invariants under the scattering morphisms.
 
I will not repeat here what makes the asymptotics morphism canonical; roughly speaking, it is the only morphism such that a measure $\varphi$ is ``equal'' to $e_\emptyset^* \varphi$ ``close to infinity'', see \cite[Section 5]{SV} for details. The scattering operators are characterized by the above properties, but we need a more explicit description of them, in order to compute them. This description is given by \cite[(9.4) and Proposition 10.18]{DHS}, and we repeat it here; unfortunately, the theoretical description is quite involved; the reader may want to skip directly to our computation of scattering operators in the three examples of Table \eqref{tableX}, which is performed in the following subsections and is quite straightforward, and return to the definitions as needed.

We assume, for simplicity, that $X$ admits a $G$-invariant measure; by \cite[\S 4.2]{SV}, any such measure induces a $G$-invariant measure on $X_\emptyset$. We can work with functions instead of measures: any element of $\mathcal S^+(X_\emptyset)$ can be written as the product of a $G$-invariant measure $dx$ by a function in a space $\mathcal F^+(X_\emptyset)$, and once the scattering operators have been defined for functions, they are defined for measures by multiplying by $dx$. The normalizations \eqref{action-normalized-functions}, \eqref{action-normalized-measures} ensure that multiplication by $dx$ is $A_X\times G$-equivariant.

An element $\Phi\in \mathcal F^+(X_\emptyset)$ can be reconstructed from its Mellin transform 
\begin{equation}\label{Mellin-Splus} \check\Phi(\chi)(x) = \int_{A_X} a\cdot \Phi(x) \chi^{-1}(a) da \in C^\infty(A_X,\chi\backslash X_\emptyset)
\end{equation}
Here, $C^\infty(A_X,\chi\backslash X_\emptyset)$ means that the function is $\chi$-equivariant with respect to the \emph{normalized} action of $A_X$. If we choose a base point on $X_\emptyset$, with stabilizer contained in a Borel subgroup $B$, the space $C^\infty(A_X,\chi\backslash X_\emptyset)$ becomes equal to the normalized induced representation $I_B({^{w_0}\chi})$, where $w_0$ is the longest element of the Weyl group. (Recall the conventions about the $A_X$-action on $X_\emptyset$, described above.) This Mellin transform is convergent once $\chi^{-1}$ vanishes fast enough on the complement of $X_\emptyset$ in $X_\emptyset^a$ (that is, it vanishes fast enough on the boundary of an $A_X$-orbit),  extends rationally to the variety of all complex characters of $A_X$, and choosing such a character $\omega$ that vanishes fast enough on the boundary of an $A_X$-orbit, the \emph{inverse Mellin transform} is:
$$\Phi(x) = \int_{\omega^{-1} \widehat{A_X}} \check\Phi(\chi)(x) d\chi.$$
The Haar measure $da$ on $A_X$ chosen to define the Mellin transform is not important here, but one has to choose the dual Haar measure $d\chi$ on its unitary dual $\widehat{A_X}$.

The scattering operator $\mathfrak S_w$ can be accordingly decomposed:
\begin{equation}\label{scatteringMellin} \mathfrak S_w \Phi = \int_{\omega^{-1} \widehat{A_X}} \mathscr S_{w,\chi}\check\Phi({^{w^{-1}}\chi})(x) d\chi,
\end{equation}
where\footnote{There is a slight difference here from the notation of \cite{DHS}: scattering operators are indexed by the character \emph{in their image}, not in their source. This is to ensure compatibility with other morphisms, like the $\mathfrak N_\chi$'s below, which have only a character in appearing their image, not in their source.}
\begin{equation}\label{fiberwisescattering}\mathscr S_{w,\chi}: C^\infty(A_X,{^{w^{-1}}\chi}\backslash X_\emptyset) \to C^\infty(A_X,\chi\backslash X_\emptyset)\end{equation}
are the \emph{fiberwise scattering operators}, varying rationally in $\chi$, that are characterized by the commutativity of the following diagram:

\begin{equation}\label{fiberscatteringdiagram} \xymatrix{ && C^\infty(A_X,{^{w^{-1}}\chi}\backslash X_\emptyset^h) \ar[rr]^{\mathfrak M_{{^{w^{-1}}\chi}}^{-1}}  && C^\infty(A_X,{^{w^{-1}}\chi}\backslash X_\emptyset) \ar[dd]^{\mathscr S_{w,\chi}}  
\\ \mathcal F(X) \ar[urr]^{\mathfrak N_{^{w^{-1}}\chi}}\ar[drr]_{\mathfrak N_{\chi}}&&&  \\ 
&& C^\infty(A_X,{\chi}\backslash X_\emptyset^h) \ar[rr]^{\mathfrak M_{\chi}^{-1}}  && C^\infty(A_X,{\chi}\backslash X_\emptyset).}
\end{equation}

The notation here is as follows: The space $X_\emptyset^h$ is the space of \emph{generic horocycles} on $X$, or on $X_\emptyset$. It classifies pairs $(B,Y)$, where $B$ is a Borel subgroup of $G$, with unipotent radical $N$, and $Y$ is an $N$-orbit in the open $B$-orbit on $X$, or on $X_\emptyset$; by \eqref{samehoro}, $X$ and $X_\emptyset$ give canonically isomorphic spaces by this construction. If $B$ and $B^-$ are two opposite Borel subgroups of $G$ with $B\cap B^-\simeq T$, and we represent $X_\emptyset$ as $SN^-\backslash G$, where $S\subset T$, then $X_\emptyset^h\simeq SN\backslash G$. Although we will not stick with it, it is very useful here to represent the unipotent radical of the stabilizer of a point on $X_\emptyset$ by $N^-$, and the unipotent radical of the stabilizer of a generic horocycle through that point by $N$. The action of $A_X$ on $X_\emptyset^h$ is induced by its action on $X\sslash N$, as suggested by this notation: denoting by $SN$ the stabilizer of a point on $X_\emptyset^h$, the universal Cartan acts on that point via its \emph{defining} identification with $B/N$, unlike the case of $X_\emptyset$. The action of $A_X$ on functions and measures on this space is again defined to be unitary, i.e.,  as in \eqref{action-normalized-functions}, \eqref{action-normalized-measures} but with $\delta$ replaced by $\delta^{-1}$.

The operator $\mathfrak M_{\chi}$, which can be thought of as the ``standard intertwining operator'', is the operator which, in a region of convergence, takes a function in $C^\infty(A_X,{\chi}\backslash X_\emptyset)$ and integrates it over generic horocycles. \emph{Because there is no canonical measure on those horocycles, this operator depends on a choice of such measures}, and more canonically has image in a certain line bundle over $X_\emptyset^h$ (the line bundle dual to the line bundle whose fiber over a horocycle is the set of invariant measures on it --- see \cite[\S 15.2]{SV}). However, in the cases of Table \eqref{tableX} that we are interested in in this paper, such a choice can be made $G$-equivariantly, and it will not matter for the commutativity of the diagram --- the important point here being that horocycles in $X_\emptyset$ and $X$ are identified by \eqref{samehoro}, and the choices of Haar measures must be made compatibly.

The operator $\mathfrak N_{\chi}$ is, similarly, the integral over the horocycle on $X$, followed by an averaging over horocycles in the same $B$-orbit, against the character $\chi^{-1}$ of $A_X$; that is, for a horocycle $Y$, considered both as a point in $X_\emptyset^h$ and as a subspace of $X$, 
$$ \mathfrak N_\chi \Phi (Y) = \int_{A_X} \left(\int_{aY} \Phi(y) dy \right) \chi^{-1}\delta^{-\frac{1}{2}}(a) da.$$

The measure used on $A_X$ is here the same as in the definition of Mellin transform on $X_\emptyset$, so that $\mathfrak M_\chi$ composed with Mellin transform is equal to $\mathfrak N_\chi$ when $X=X_\emptyset$.

We will now calculate the fiberwise scattering operators $\mathscr S_w$ of \eqref{fiberwisescattering} for the non-trivial element $w$ of $W_X\simeq \Z/2$, for the cases of Table \eqref{tableX}.\footnote{For the first line of Table \eqref{tableX}, we assume that $G$ is split of semisimple rank one; the formula for the general split case, which we will not need, can be deduced from this, and the fact that scattering operators compose as in $W_X$, i.e., define an action of $W_X$.} The final result will have the form
$$ \mathscr S_{w,\chi} = \gamma(\chi) \mathfrak R_\chi,$$
where $\mathfrak R_\chi$ is as standard intertwining operator between principal series (essentially, the same as $\mathfrak M_\chi$ after some non-canonical identifications of the spaces involved), and $\gamma(\chi)$ a constant depending on $\chi$ (and expressed in terms of abelian gamma factors of the local functional equation of Tate integrals).

The calculation is quite elementary, on one hand; on the other, it is quite fine to normalize operators such as ``the'' standard intertwining operators between principal series. The constructions needed to formulate a precise result are essentially the constructions needed to prove it; therefore, breaking with the principles of good mathematical exposition, I will formulate the result in the end; the reader can jump ahead to Theorem \ref{thmscattering} to read it.

The calculations that follow hold both over non-Archimedean and over Archimedean fields; thus, despite the fact that the results on asymptotics do not hold as stated in the Archimedean case, we take diagram \eqref{fiberscatteringdiagram} as the defining diagram for the fiberwise scattering operators, and work over an arbitrary local field.

\subsection{The Whittaker case}\label{scatWhittaker}

 In order to compute the scattering maps in the Whittaker case, we will adopt the abstract point of view on the Whittaker model, that was introduced in \S \ref{ssinvariant}. Hence, we take $G=\SL(V)$, where $V$ is a two-dimensional \emph{symplectic} vector space. In particular, it is endowed with a Whittaker structure, and Whittaker functions are functions on the $\Ga$-torsor $\tilde V =\{ (v, v^\vee)\in V \times V^\vee| \left<v,v^\vee\right>=1\}$ over $V^* = V\smallsetminus\{0\}$, which vary by the character $\psi$ of $\Ga$. 
 
 We identify the dual $V^\vee$ with $V$ through the isomorphism $\iota_\omega: V\xrightarrow\sim V^\vee$ by $\left<u,\iota_\omega(v)\right> = \omega(u,v)$.
 Notice that $V^{\vee*}$, the complement of zero in $V^\vee$, can be identified with the variety $V^h$ of ``generic horocycles'' on $V$, that is, affine lines which do not contain the origin; the correspondence sends a functional $v^\vee$ to the affine line of those $v\in V$ with $\left<v,v^\vee\right>=1$, that is, the fiber of $\tilde V$ over $v^\vee$. Thus, we can identify $\tilde V$ with the tautological $G$-orbit on $V^*\times V^h$, consisting of pairs $(v,V_u)$, where $V_u\subset V$ is a generic affine line containing $v$.

 Having fixed the symplectic form $\omega$, we get isomorphisms $V^*\simeq V^{\vee*}\simeq V^h$,  
 and we can identify $\tilde V$ as the subset of $V^*\times V^*$ consisting of pairs $(v,u)$ with $\omega(v,u)=1$. This is a \emph{generic} $G$-orbit in $V^*\times V^*$, that is, one where stabilizers of the two points do not belong to the same Borel subgroup. On the other hand, the identification $V^*\xrightarrow\sim V^h$ corresponds to a \emph{special} $G$-orbit on $V^*\times V^h$, i.e., the stabilizers of two points belong in the same Borel subgroup. Let us write $\mathcal B$ for the flag variety of Borel subgroups of $G$, then it is immediate that, by these associations, a Whittaker structure is also equivalent to the following:
\begin{itemize}   
  \item a \emph{generic} $G$-orbit on $V^*\times V^*$;
  \item a $G$-orbit on $V^*\times_{\mathcal B} V^h$.
\end{itemize}

 Now we define \emph{Fourier transform}, \emph{Radon transform} and the \emph{Jacquet integral}.

 Fourier transform $\mathfrak F$ will be defined as an $\SL(V)$-equivariant endomorphism of the Schwartz space of functions $\mathcal F(V)$ by the formula
\begin{equation}\label{Fourierconvention}\mathfrak F \Phi(v^*) = \int_V \Phi(v) \psi(\omega(v,v^*)) |\omega|(v).\end{equation}

 Radon transform is the map 
 $$ \mathfrak R: \mathcal F(V)\to C^\infty(V^*)$$
 given by the pull-push construction under the above maps, i.e., 
 $$ \mathfrak R = t_! s^*,$$
 where $s^*$ denotes pullback of functions under the second projection, and $t_!$ is integration over the fibers of $t$, \emph{which are $\Ga$-torsors and hence are endowed with the fixed Haar measure of $F$}.

 Explicitly, in coordinates $(x,y)$ with $\omega = dx \wedge dy$,
 \begin{equation}\label{Radonincoordinates} \mathfrak R\Phi(0,1) = \int \Phi(1,x) dx.
 \end{equation}

 The Jacquet integral\footnote{Usually, this name is given to its adjoint $\mathfrak J^*$ that appears below.} is the map
 $$ \mathfrak J: \mathcal F(V^*, \mathcal L_\psi)\to C^\infty(V^*)$$
 given by 
 $$ \mathfrak J = t_! s_\psi^*,$$
 where $s_\psi^*$ is the pullback of Whittaker sections to $\tilde V$ (where, again, the pullback of $\mathcal L_\psi$ is equipped with a trivialization). In coordinates, it is given by the same formula \eqref{Radonincoordinates} as the Radon transform above, as long as the argument of $\Phi$ inside of the integral is replaced by the pair $((1,x), (0,1))$.
 
 The measure $|\omega|$ on $V$ gives rise to a duality pairing between functions, or between sections of the line bundle $\mathcal L_\psi$ and sections of the line bundle $\mathcal L_{\psi^{-1}}$ defined by the inverse character, and 
 the adjoint of the Jacquet integral (a morphism $\mathcal J^*: \mathcal F(V^*)\to C^\infty(V^*,\mathcal L_{\psi^{-1}})$) can be written
 $$ \mathfrak J^* = s_{\psi,!} t^*,$$
 where the twisted push-forward $s_{\psi,!}$ is the dual map to $s_\psi^*$ with respect to the fixed Haar measure on the fibers of the $\Ga$-torsor $\tilde V\xrightarrow{s} V$. 
 
 We are interested in \emph{functional equations (scattering operators) for the Jacquet integral}. 
 
 Consider the $L^2$-normalized action of $\Gm$ on $C^\infty(V^*)$: 
 $$ a\cdot \Phi(v) = |a| \Phi(av).$$
 The Radon transform is anti-equivariant with respect to this action:
 $$\mathfrak R(a\cdot \Phi) = a^{-1} \cdot \mathfrak R\Phi.$$

 We have the following relation between Fourier and Radon transforms:
  \begin{equation}\label{RadonFourier}  \mathfrak F\Phi(v) = \int \mathfrak R(a\cdot \Phi)(v)  \psi^{-1}(a) |a|d^\times a.\end{equation}
 
 Indeed, for $v\in V$, choose a section $a\mapsto u_a = a u_1$ of the quotient map $V\ni u \mapsto \omega(u,v)\in \Ga$; then, by definition, the value of Radon transform at $v$ is
 $$ \mathfrak R(\Phi)(v) = \int \Phi(u_1 - z v) dz,$$
 Hence
 $$ \int \mathfrak R(a\cdot \Phi)(v)  \psi(a) |a|d^\times a = \int |a| \int \Phi(a u_1 + zav)  dz \psi(a) da =  \int \int \Phi(a u_1 + z' v) dz' \psi(a) da.$$
 By definition, the measure $|\omega|$ on $V$ is equal to $da dz'$ when $(a,z)$ are the coordinates in the basis $(u_1, v)$, so the last integral can be written
 $$ \int_V \Phi(u) \psi(\omega(u,v)) |\omega|(u) = \mathfrak F\Phi(v).$$
 
 On the other hand, the adjoint of the Jacquet integral, evaluated on $(v,u)\in \tilde V$, can be written
 $$ \mathfrak J^*\Phi(v,u) = \int \Phi(u-zv) \psi(z) dz.$$
 
 Hence, if $\mathfrak F^*$ denotes Fourier transform defined with the character $\psi^{-1}$, instead of $\psi$,
 $$\mathfrak J^*\mathfrak F^*\Phi(v,u) = \int \iint \Phi(av+bu) \psi(-a-bz+z) da db  dz = 
 \int \Phi(av+u) \psi(-a) da = \mathfrak J^*\Phi(v,u).$$
 
 We have shown: 
 $$\mathfrak J^* \circ \mathfrak F^* = \mathfrak J^*,$$
 or, taking adjoints and noticing that the adjoint of $\mathfrak F^*$ is $\mathfrak F$,
  \begin{equation}\label{JacquetFourier} \mathfrak F \circ \mathfrak J = \mathfrak J.\end{equation}
 
Now we project to coinvariants with respect to various characters of $\Gm$. Mellin transform:
$$ \check\Phi(\chi) (v) = \int a\cdot \Phi(v) \chi^{-1}(a) d^\times a$$
is a morphism
$$\mathcal F(V^*) \to C^\infty(\Gm,\chi\backslash V^*),$$
where the notation $C^\infty(\Gm,\chi\backslash V^*)$ means $(\Gm,\chi)$-equivariant functions with respect to the normalized action. For $\Phi\in \mathcal S(V)$, or for $\Phi$ in the image of Radon transform and the Jacquet integral, it converges for $\chi$ in some domain, and admits rational continuation to all $\chi$. Because of their equivariance properties with respect to the $\Gm$-action, all the above transforms descend to meromorphic families of transforms between the coinvariant spaces, that will be denoted by the index $\chi$:

$$ \xymatrix{ 
\mathcal F(V) \ar[d]\ar[r]^{\mathfrak F} &\mathcal F(V)\ar[d] \\
C^\infty(\Gm,\chi^{-1}\backslash V^*) \ar[r]^{\mathfrak F_\chi} &C^\infty(\Gm,\chi\backslash V^*);}$$

$$ \xymatrix{ 
\mathcal F(V) \ar[d]\ar[r]^{\mathfrak R} & \mathfrak R(\mathcal F(V))\ar[d] \\
C^\infty(\Gm,\chi^{-1}\backslash V^*) \ar[r]^{\mathfrak R_\chi} &C^\infty(\Gm,\chi\backslash V^*);}$$

$$ \xymatrix{ 
\mathcal F(V^*, \mathcal L_\psi) \ar[dr]_{\mathfrak J_\chi} \ar[r]^{\mathfrak J} & \mathfrak J(\mathcal F(V))\ar[d] \\
 &C^\infty(\Gm,\chi\backslash V^*).}$$ 

\begin{remark}\label{remarkcaution}
 Some caution with the notation is needed here when comparing with the operators $\mathfrak N_\chi, \mathfrak M_\chi$ of \eqref{fiberscatteringdiagram}, when $X_\emptyset=V^*$: the identification $V^h\simeq V^*$ that we have here is \emph{anti-equivariant} with respect to the action of $\Gm$. Thus, the space $C^\infty(\Gm,\chi\backslash V^*)$ should be denoted by $C^\infty(\Gm,\chi^{-1}\backslash V^h)$, if we replaced $V^*$ by $V^h$, and what is denoted here with $\mathfrak R_\chi$ would be $\mathfrak M_{\chi^{-1}}$ in the notation of \eqref{fiberscatteringdiagram}, and $\mathfrak J_\chi$ would be $\mathfrak N_{\chi^{-1}}$.
\end{remark}

The relation \eqref{RadonFourier} translates to 
\begin{equation}\label{RadonFourierspectral}
\mathfrak F_\chi= \gamma(\chi,0,\psi)  \mathfrak R_\chi.
\end{equation}

Indeed, we have 
$$ \mathfrak F_\chi(\check\Phi(\chi^{-1}))(v) = \widecheck{\mathfrak F \Phi}(\chi)(v)  = \int z\cdot \mathfrak F\Phi (v) \chi^{-1}(z) d^\times z = $$
$$= \int \int z\cdot \mathfrak R(a\cdot \Phi)(v)  \psi(a) |a|d^\times a \chi^{-1}(z) d^\times z $$
$$ = \int |z| \int x^{-1}\cdot \mathfrak R\Phi(v) \psi(xz) dx \chi^{-1}(z) d^\times z,$$
which is the Tate integral 
$$ Z(\hat\varphi,\chi^{-1}, 1) = \int \hat\varphi(z) |z|\chi^{-1}(z) d^\times z$$
of the function $\hat\varphi(z) = \int \varphi(x) \psi(zx) dx$, where $\varphi (x) = x^{-1}\cdot \mathfrak R\Phi(v)$.  

By the functional equation \cite{Tate-Corvallis}:
 $$ \gamma(\chi,s,\psi) Z(\varphi,\chi,s) = Z(\hat\varphi, \chi^{-1},1-s).$$
we get 
$$\mathfrak F_\chi(\check\Phi(\chi^{-1}))(v) = \gamma(\chi,0,\psi) \int z^{-1}\cdot \mathfrak R\Phi(v) \chi(z) d^\times z = \gamma(\chi,0,\psi)  \mathfrak R_\chi \Phi(v).$$

Similarly, \eqref{JacquetFourier} translates to 
\begin{equation}\label{JacquetFourierspectral}
 \mathfrak F_{\chi^{-1}} \circ \mathfrak J_{\chi} = \mathfrak J_{\chi^{-1}}.
\end{equation}

If we identify $V^*$ with $N\backslash \SL_2$, under our conventions the universal Cartan $A$ of $\SL_2$ acts on $V^*$ by the character $e^\frac{\alpha}{2}$. Thus, for a character $\tilde\chi$ of $A$, we set $\chi = \tilde\chi\circ e^{\check\alpha}$. Then the operators $\mathfrak N_{\tilde\chi}$, $\mathfrak M_{\tilde\chi}$ of \eqref{fiberscatteringdiagram} correspond to $\mathfrak J_{\chi^{-1}}$, $\mathfrak R_{\chi^{-1}}$, respectively (see Remark \ref{remarkcaution}), and we have the corresponding commutative diagram, with \eqref{JacquetFourierspectral} added to it:
$$ \xymatrix{ && C^\infty(\Gm,\chi\backslash V^*) \ar[dd]^{\mathfrak F_{\chi^{-1}}}\ar[r]^{\mathfrak R^{-1}_\chi}  & C^\infty(\Gm,\chi^{-1}\backslash V^*)  \ar[dd]^{\mathscr S_{w,\tilde\chi}}  
\\ \mathcal F(V^*,\mathcal L_\psi) \ar[urr]^{\mathfrak J_{\chi}}\ar[drr]_{\mathfrak J_{\chi^{-1}}}&&&  \\ 
&& C^\infty(\Gm,\chi^{-1}\backslash V^*) \ar[r]^{\mathfrak R_{\chi^{-1}}^{-1}}  & C^\infty(\Gm,\chi\backslash V^*) .}$$

Thus, we get 
$$ \mathscr S_{w,\tilde\chi} = \mathfrak R_{\chi^{-1}}^{-1} \circ \mathfrak F_{\chi^{-1}} \circ \mathfrak R_\chi,$$ 
and, invoking \eqref{RadonFourierspectral}, this is equal to 
$$ \gamma(\chi^{-1},0,\psi)  \mathfrak R_{\chi},$$
or, in other words (writing now $\tilde\chi = \chi\circ e^{\check\alpha}$ as $\chi$):

\begin{equation}\label{scattering-Whittaker} \mathscr S_{w,\chi} = \gamma(\chi, -\check\alpha,0,\psi) \cdot \mathfrak R_{\chi}.\end{equation}

Although we have worked with $\SL(V)$ up to now, this formula remains valid for the Whittaker model of any split group $G$ of semisimple rank one; indeed, given a non-trivial unipotent subgroup $N$ of $G$ with an identification $N\simeq\Ga$, and compatible maps $\SL(V)\to G$, $V^*\to Y:=N\backslash G$, this induces a Whittaker structure on $V$, and hence a distinguished $\SL_2(V)$-orbit in $\subset V^*\times_{\mathcal B} V^h$, whose image determines a distinguished $G$-orbit on $Y\times_{\mathcal B} Y^h$, which, it is immediate to confirm, does not depend on choices. Morover, generic horocycles on $V^*$ map isomorphically to generic horocycles on $Y$, so we can transfer the measures induced by the Whittaker structure, and define the operator $\mathfrak R_{\chi}$ accordingly. The fiberwise scattering maps $\mathscr S_{w,\chi}$ should then be rational multiples of $\mathfrak R_\chi$, and the rational scalar can be computed by pullback to $V^*$; thus, equation \eqref{scattering-Whittaker} remains valid for $G$. 

I add the following corollary to \eqref{scattering-Whittaker}, which will be used later:
\begin{corollary}\label{corWhittakerasymptotics}
 Let $F$ be a non-Archimedean field, $G$ a split group of semisimple rank one, and $X=(N,\psi)\backslash G$ a symbol for the Whittaker model of $G$. Let $A=A_X$ be the universal Cartan of $G$, acting on measures on $X_\emptyset$ by the normalization described in \eqref{action-normalized-measures}. Let $h$ be the measure on $A$ with Mellin transform 
 $$\check h(\chi) = L(\chi,\check\alpha, 1)^{-1}.$$ (It belongs to the Bernstein center, i.e., the completed Hecke algebra of $A$.) Then, for any $\varphi\in \mathcal S^+(X_\emptyset)$, the element
 $$ h\cdot \varphi (x) = \int_A a\cdot \varphi(x) h(a) $$
 belongs to $\mathcal S(X_\emptyset)$.
\end{corollary}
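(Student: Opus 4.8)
The plan is to compare Mellin transforms with respect to the torus $A=A_X$ and to reduce everything to the scattering computation \eqref{scattering-Whittaker}. Recall that, by the Paley--Wiener theorem (Theorem \ref{PW-torus}, in the fiberwise form appropriate to $X_\emptyset$, cf.\ \cite{DHS}), the Mellin transform $\varphi\mapsto\check\varphi$ of \eqref{Mellin-Splus} identifies $\mathcal S(X_\emptyset)$ with a space of \emph{holomorphic} Paley--Wiener sections over $\widehat A_\CC$, valued in the spaces $C^\infty(A,\chi\backslash X_\emptyset)$; so the task is to show that $\widecheck{h\cdot\varphi}$ is again such a holomorphic section. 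Since $W_X\simeq\Z/2=\{1,w\}$ and the scattering morphisms realize an action of $W_X$ on $\mathcal S^+(X_\emptyset)$ that is generated by $\mathcal S(X_\emptyset)$, every $\varphi\in\mathcal S^+(X_\emptyset)$ can be written $\varphi=\varphi_0+\mathfrak S_w\varphi_1$ with $\varphi_0,\varphi_1\in\mathcal S(X_\emptyset)$, and it suffices to treat the two summands separately.

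The summand $\varphi_0$ is immediate: $h$ lies in the completed Hecke algebra of $A$ --- its Mellin transform $\check h(\chi)=L(\chi,\check\alpha,1)^{-1}$ equals $1-q^{-1}\chi(e^{\check\alpha}(\varpi))$ on the components of $\widehat A_\CC$ where $\chi\circ e^{\check\alpha}$ is unramified, and $1$ on the remaining components --- so acting by $h$ multiplies $\check\varphi_0$ by a section that is polynomial on each component, which preserves the Paley--Wiener property; hence $h\cdot\varphi_0\in\mathcal S(X_\emptyset)$. For the summand $\mathfrak S_w\varphi_1$, formulas \eqref{scatteringMellin} and \eqref{scattering-Whittaker} give
\begin{equation*}
\widecheck{\mathfrak S_w\varphi_1}(\chi)=\gamma(\chi,-\check\alpha,0,\psi)\cdot\mathfrak R_\chi\bigl(\check\varphi_1({}^w\chi)\bigr),
\end{equation*}
where $\check\varphi_1$ is holomorphic and $\mathfrak R_\chi$ is, after non-canonical identifications, a standard intertwining operator between rank-one principal series. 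By the Tate functional equation \eqref{gammafactor} (using $L((\chi\circ e^{-\check\alpha})^{-1},1)=L(\chi\circ e^{\check\alpha},1)=L(\chi,\check\alpha,1)$ and $L(\chi\circ e^{-\check\alpha},0)=L(\chi^{-1},\check\alpha,0)$),
\begin{equation*}
\gamma(\chi,-\check\alpha,0,\psi)=\frac{\epsilon(\chi\circ e^{-\check\alpha},0,\psi)\,L(\chi,\check\alpha,1)}{L(\chi^{-1},\check\alpha,0)},
\end{equation*}
so the \emph{zero} of $\gamma(\chi,-\check\alpha,0,\psi)$ coming from $L(\chi^{-1},\check\alpha,0)^{-1}$ cancels the unique (Gindikin--Karpelevich) pole of $\mathfrak R_\chi$, while the \emph{pole} of $\gamma(\chi,-\check\alpha,0,\psi)$ is exactly that of $L(\chi,\check\alpha,1)$. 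Consequently $\widecheck{\mathfrak S_w\varphi_1}(\chi)$ is a holomorphic Paley--Wiener section times the single factor $L(\chi,\check\alpha,1)$; that is, it has at worst a simple pole along the hyperplane where $L(\bullet,\check\alpha,1)$ does. Multiplying by $\check h=L(\bullet,\check\alpha,1)^{-1}$ (at the character fixed by the normalized $A$-action of the statement) cancels precisely this pole, so $\widecheck{h\cdot\mathfrak S_w\varphi_1}$ is a holomorphic Paley--Wiener section and $h\cdot\mathfrak S_w\varphi_1\in\mathcal S(X_\emptyset)$. Adding the two cases proves the corollary.

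\noindent\textbf{Main obstacle.}
The step I expect to require genuine care is the bookkeeping that makes this cancellation exact. First, one must verify that $\mathfrak S_w\varphi_1$ carries no poles beyond the $L(\bullet,\check\alpha,1)$-hyperplane; for a split group of semisimple rank one this follows from the explicit geometry of generic horocycles (which exhibits $\mathfrak R=t_!s^*$ as a pull-push through $X_\emptyset^h$ with a single pole) together with the Paley--Wiener theorem \cite[Theorem 1.8]{DHS} for $\mathcal S^+(X_\emptyset)$. Second, one must reconcile the normalizations --- the $w_0$-twist built into the $A_X$-action on $X_\emptyset=N\backslash G$, the $\delta^{\pm 1/2}$-normalizations \eqref{action-normalized-functions}--\eqref{action-normalized-measures}, and the difference between the regular action appearing in the statement and the convolution action --- so as to be sure that $h$ multiplies $\check\varphi$ by $L(\bullet,\check\alpha,1)^{-1}$ evaluated at precisely the character at which $\gamma(\chi,-\check\alpha,0,\psi)$ has its pole. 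Once these conventions are in place the argument is mechanical.
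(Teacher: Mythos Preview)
Your approach is essentially the paper's: both reduce via the decomposition $\varphi=\varphi_0+\mathfrak S_w\varphi_1$ (implicit in the paper's opening sentence) to the scattering formula \eqref{scattering-Whittaker}, and both hinge on the same cancellation
\[
\gamma(\chi,-\check\alpha,0,\psi)\cdot L(\chi,\check\alpha,1)^{-1}=\epsilon(\chi,-\check\alpha,0,\psi)\cdot L(\chi,-\check\alpha,0)^{-1},
\]
which is polynomial on each component of $\widehat A_\CC$. The only real difference is in how the argument is closed off. You invoke a Paley--Wiener characterization of $\mathcal S(X_\emptyset)$ inside $\mathcal S^+(X_\emptyset)$ (holomorphy of the Mellin transform), appealing to \cite{DHS}; the paper instead translates the polynomial factor back into an element $h'$ of the completed Hecke algebra, writes $\mathfrak S_w(h\cdot\varphi)=h'^\vee\cdot\mathfrak R\varphi$, and finishes by a \emph{support} argument: $\mathfrak R\varphi$ is supported away from the cusp $X_\emptyset^a\smallsetminus X_\emptyset$ (a direct geometric observation about horocycles), while anything in $\mathcal S^+(X_\emptyset)$ has support with compact closure in $X_\emptyset^a$, so the intersection forces compact support in $X_\emptyset$.

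Your route is slightly more streamlined in making clear that it is $h\cdot\varphi$ (not $\mathfrak S_w(h\cdot\varphi)$) that lands in $\mathcal S(X_\emptyset)$, but it costs you an external input --- the fiberwise Paley--Wiener theorem for $X_\emptyset$ --- that the paper avoids. The paper's support argument is more self-contained and also makes transparent \emph{why} $L(\bullet,\check\alpha,1)^{-1}$ is exactly the right multiplier: it is what turns the scattering operator into convolution by a compactly supported measure against Radon transform. Your ``Main obstacle'' about the pole structure of $\mathfrak R_\chi$ is handled implicitly in the paper by this same support reasoning, so you need not analyze Gindikin--Karpelevich poles separately.
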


\begin{proof}
 Indeed, $\mathcal S^+(X_\emptyset)$ is generated by $\mathcal S(X_\emptyset)$ under the action of the scattering operator $\mathfrak S_w$, which is expressed in terms of the Mellin transform by \eqref{scatteringMellin}. By \eqref{scattering-Whittaker}, 
 $$ \mathscr S_{w,\chi} (h\cdot \varphi) = \gamma(\chi, -\check\alpha,0,\psi) \cdot \mathfrak R_{\chi} \widecheck{(h\cdot \varphi)({^w\chi})} =
\gamma(\chi, -\check\alpha,0,\psi) \check h({^w\chi^{-1}}) \cdot \mathfrak R_\chi \check\varphi({^w\chi}) =$$
$$ =  \gamma(\chi, -\check\alpha,0,\psi)L(\chi,\check\alpha, 1)^{-1} \cdot \mathfrak R_{\chi} \check \varphi({^w\chi}) = \epsilon(\chi, -\check\alpha,0,\psi)L(\chi, -\check\alpha,0)^{-1} \cdot \mathfrak R_{\chi} \check \varphi({^w\chi})$$
(see \eqref{gammafactor}).

The factor $\epsilon(\chi, -\check\alpha,0,\psi)L(\chi, -\check\alpha,0)^{-1}$ is polynomial in $\chi$, hence corresponds to another element $h'$ of the completed Hecke algebra of $A$. Thus, applying inverse Mellin transform \eqref{scatteringMellin},
\begin{equation}\label{ShR} \mathfrak S_w h\cdot \varphi = h'^\vee \cdot \mathfrak R \varphi,\end{equation}
where $h^\vee (a) = h(a^{-1})$.

The support of the measure $\mathfrak S_w (h\cdot \varphi)$ has compact closure in the affine completion $X_\emptyset^a = \spec F[N\backslash G]$. On the other hand, $\mathfrak R \varphi$ is supported away from the ``cusp'' $X_\emptyset^a\smallsetminus X_\emptyset$, as is very easy to see from the definition. (When we identify the space $X_\emptyset$ with its horocycle space, a point approaching the cusp corresponds to a horocycle approaching ``infinity'' in $X_\emptyset^a$.)

Thus, \eqref{ShR} implies that $\mathfrak S_w (h\cdot \varphi) \in \mathcal S(X_\emptyset)$.

\end{proof}

\subsection{The case of $\Gm\backslash\PGL_2$} \label{scattorus}

Now let $G=\PGL(V)$, where $V$ is a two-dimensional vector space. We assume that $V$ is endowed with an alternating form, and let $X=$ the $G$-variety of quadratic forms of discriminant $-\frac{1}{4}$ (so that in some coordinates $(y,z)$ for a standard symplectic basis, such a form is given by $yz$). Notice that we think of $G$ as $\SL(V)/\{\pm 1\}$ in order to define a $G$-action that fixes the discriminant. Thus, $X\simeq \Gm\backslash \PGL_2$. 

The boundary degeneration $X_\emptyset\simeq N\backslash G$ can be identified with the space of degenerate quadratic forms of rank one, which is canonically isomorphic to $(V^{\vee*})/\{\pm 1\}$, by sending a linear functional to its square. Having fixed the symplectic form $\omega$, and hence the isomorphism $\iota_\omega: V\xrightarrow\sim V^\vee$ as in the previous subsection, we will identify $X_\emptyset$ with $V^*/\{\pm 1\}$.

The evaluation map gives rise to canonical isomorphisms
\begin{eqnarray} (X\times V)\sslash \SL(V)\xrightarrow\sim \Ga, \nonumber \\
 \label{isomorphisms} (X_\emptyset\times V)\sslash \SL(V)\xrightarrow\sim \Ga,\end{eqnarray}
which are the ones inducing the canonical bijection of horocycles \eqref{samehoro}. The preimage of $1$, in each case, is a distinguished $G$-orbit, which projects to distinguished $G$-orbits
$$ \tilde X\subset X\times X_\emptyset,$$
$$ \tilde X_\emptyset \subset X_\emptyset.$$

Notice that, in the case of $\tilde X_\emptyset$, this is the projection of the canonical $G$-orbit $\tilde V\subset V\times V$, induced by the symplectic form on $V$ (as in the previous subsection).

Fourier transform on $V$ descends to Fourier transform on $X_\emptyset$, but we have to be careful, because the map $V^*\to X_\emptyset$ is not surjective at the level of $F$-points. One therefore needs to treat $X_\emptyset$ as an open subset of the stack $[V/\{\pm 1\}]$, and define Fourier transform on functions (or measures) on the $F$-points of this stack. Specifically, this means the following: For any $\alpha\in H^1(F, \Z/2)$ (corresponding to a quadratic extension $E^\alpha$ of $F$, including the trivial one $F\oplus F$), let $R_\alpha$ be the corresponding $\Z/2$-torsor over $F$ (isomorphic to a pair of distinct conjugate points of $E^\alpha$), and let $V^\alpha \simeq V\times^{\Z/2} R^\alpha$. It is an $F$-vector space which can be identified with $V\otimes_F \Im(E^\alpha)$, where $\Im(E^\alpha)$ is the ``imaginary'' line of elements of $E^\alpha$ which are conjugate to their opposite. Then we have
$$ X_\emptyset (F) = \bigoplus_{\alpha \in H^1(F,\Z/2)} V^{\alpha *}(F)/\{\pm 1\}.$$

The symplectic form $\omega: V\times V\to \Ga$ is invariant under the diagonal $\Z/2$-action, and hence induces a symplectic form on $V^\alpha$. Explicitly, if we choose an element $e\in \Im(E^\alpha)$ to write any element of $V^\alpha$ as $v\otimes e$, we have 
$$ \omega(v_1\otimes e, v_2\otimes e) = e^2 \omega(v_1, v_2).$$
This defines Fourier transform on the Schwartz space $\mathcal F(V^\alpha)$, and in particular defines a Fourier transform $\mathfrak F$ on 
$$ \mathcal F(X_\emptyset^a):= \bigoplus_\alpha \mathcal F(V^\alpha(F))^{\Z/2}.$$

(The notation $X_\emptyset^a$ stands for the affine closure of $X_\emptyset$.) It is easy to see that $\mathfrak F$ is a $G$-equivariant and $A_X$-anti-equivariant, endomorphism 
$$\mathfrak F: \mathcal F(X_\emptyset^a) \to \mathcal F(X_\emptyset^a),$$
where, here, $A_X=$ the universal Cartan of $G$, and the action of $A_X$ is the normalized one, as in \eqref{action-normalized-functions}. 

Similarly, the correspondence $\tilde X_\emptyset \underset{t}{\overset{s}\rightrightarrows} X_\emptyset$, together with the Haar measure on horocycles (i.e., fibers of the map $t$) descending from that on $V^*$, gives rise to Radon transform
$$ \mathfrak R: \mathcal F(X_\emptyset^a)  \to C^\infty (X_\emptyset),$$
defined as before.

The analog of the Jacquet integral here is the morphism
$$ \mathfrak I: \mathcal F(X)\to C^\infty(X_\emptyset)$$
obtained by the correspondence 
$$\tilde X \subset X\times X_\emptyset.$$
again with the measure on generic horocycles on $X$ obtained by their identification with generic horocycles of $X_\emptyset$.

The adjoint of $\mathfrak I$ (with respect to invariant measures, which we do not necessarily need to fix)  
$$ \mathfrak I: \mathcal F(X_\emptyset) \to C^\infty(X)$$
given by the integral
\begin{equation}\label{dualI} \mathfrak I^*\Phi(x) = \int_{(X_\emptyset)_x} \Phi(v) \mu(v),\end{equation}
where $(X_\emptyset)_x\subset X_\emptyset$ is the fiber of $\tilde X$ over $x$, and $\mu$ is a $G_x$-invariant measure on it.

As before, the composition of $\mathfrak I$ with Mellin transform will be denoted
$$\mathfrak I_\chi: \mathcal F(X) \to C^\infty(A_X,\chi\backslash X_\emptyset).$$

Dualizing, we get a morphism
$$\mathfrak I_\chi^*: C^\infty(A_X,\chi^{-1}\backslash X_\emptyset) \to C^\infty(X),$$
which in some domain of convergence is given again by \eqref{dualI}.

The composition of this with Mellin transform
$$\tilde{\mathfrak I}_\chi^*: \mathcal F(X_\emptyset) \to C^\infty(A_X,\chi^{-1}\backslash X_\emptyset) \xrightarrow{\mathfrak I_\chi^*} C^\infty(X)$$
can be written as 
\begin{equation}\label{dualII} \tilde{\mathfrak I}_\chi^*\Phi(x) = \int_{X_\emptyset} \Phi(v) \tilde\chi\circ p((x,v)) dv,\end{equation}
where $\tilde\chi(z)=|z|^{-\frac{1}{2}}\chi\circ e^{\frac{\check\alpha}{2}}(z)$ and 
 $$ p: (X\times X_\emptyset)\sslash G \xrightarrow\sim \Ga$$
is the canonical isomorphism induced from the evaluation map \eqref{isomorphisms}. 

Notice that, if we denote by $A$ the universal Cartan of $G'= \SL(V)$ which acts on $V^*$, the map $A\backslash V^* \to A_X\backslash X_\emptyset$ is an isomorphism. Thus, we can think of $\mathfrak I_\chi^*$ as a morphism from $C^\infty(A,\chi'^{-1}\backslash V^*)$, where $\chi'$ is the pullback of $\chi$ to $A$. This morphism, though, will depend on $\chi$ itself, not just $\chi'$, and if we unfold the definitions and compose with Mellin transform, we will see that the resulting functional
$$\tilde{\mathfrak I}'^*_\chi :\mathcal F(V^*) \to C^\infty(A,\chi'^{-1}\backslash X'_\emptyset) \xrightarrow{\mathfrak I_\chi^*} C^\infty(X)$$
is given by the formula 
$$ \tilde{\mathfrak I}'^*_\chi\Phi(x) = \int_{V^*} \Phi(v') \tilde\chi\circ p((x,v')) dv',$$
where we are using the same letter ($p$) for the evaluation map on $X\times V^*$.

We explicate this functional: Given $x\in X$, choose a standard symplectic basis $(u,v)$ on $V$ (i.e., $\omega(u,v)=1$) such that the quadratic form associated to $x$ is 
$$ y u + z v \mapsto yz.$$
In these coordinates, the above integral is equal to 
$$\int \Phi(z,y) \tilde\chi(yz) dy dz,$$
and adjoint Fourier transform on $V$ is biven by 
 
$$ \mathfrak F^*\Phi(y,z) = \int_V \Phi(a, b) \psi^{- 1}(az-by) da db.$$

We compute their composition. 
Assuming $\Phi(y,z) = \Phi_1(y) \Phi_2(z)$ for convenience of notation, we have
$$ \tilde{\mathfrak I}'^*_\chi \mathfrak F^*\Phi(x) =  \int \int \Phi_1(a) \Phi_2(b) \psi^{-1}(az-by) da db \tilde\chi(yz) dy dz =$$
$$ = Z(\hat\Phi_1^{\psi^{-1}}, \tilde\chi, 1) Z(\hat\Phi_2^{\psi}, \tilde\chi, 1),$$
where the exponent of Fourier transforms denotes the character they are defined by. Applying the local functional equation, we get that this is equal to 
$$ \gamma(\tilde\chi^{-1}, 0, \psi) \gamma(\tilde\chi^{-1},0, \psi^{-1}) \int \Phi(z,y) \tilde\chi^{-1}(zy) d^\times z d^\times y = $$
$$ = \gamma(\tilde\chi, 1, \psi^{-1})^{-1} \gamma(\tilde\chi,1, \psi)^{-1} \tilde{\mathfrak I}'^*_{\chi^{-1}} \Phi(x) = $$
$$ = \gamma(\chi, \frac{\check\alpha}{2}, \frac{1}{2}, \psi^{-1})^{-1} \gamma(\chi, \frac{\check\alpha}{2}, \frac{1}{2}, \psi)^{-1} \tilde{\mathfrak I}'^*_{\chi^{-1}} \Phi(x). $$

Dualizing, 
$$\mathfrak F_{\chi^{-1}} \circ \tilde{\mathfrak I}'_\chi = \gamma(\chi, \frac{\check\alpha}{2}, \frac{1}{2}, \psi^{-1})^{-1} \gamma(\chi, \frac{\check\alpha}{2}, \frac{1}{2}, \psi)^{-1} \tilde{\mathfrak I}'_{\chi^{-1}}.$$

Thus, the diagram \eqref{fiberscatteringdiagram} now reads
$$ \xymatrix{ && C^\infty(A_X,\chi\backslash X_\emptyset) \ar[dd]^{c_\chi\cdot  \mathfrak F_{\chi^{-1}}}\ar[r]^{\mathfrak R_{\chi^{-1}}^{-1}}  & C^\infty(A_X,\chi^{-1}\backslash X_\emptyset)  \ar[dd]^{\mathscr S_{w,\chi}}  
\\ \mathcal F(X) \ar[urr]^{\mathfrak I_{\chi}}\ar[drr]_{\mathfrak I_{\chi^{-1}}}&&&  \\ 
&& C^\infty(A_X,\chi^{-1}\backslash X_\emptyset) \ar[r]^{\mathfrak R_{\chi}^{-1}}  & C^\infty(A_X,\chi\backslash X_\emptyset),}$$
where $c_\chi = \gamma(\chi, \frac{\check\alpha}{2}, \frac{1}{2}, \psi^{-1}) \gamma(\chi, \frac{\check\alpha}{2}, \frac{1}{2}, \psi)$, 
and invoking \eqref{RadonFourierspectral} again, we get:

\begin{equation}\label{scattering-torus}\mathscr S_{w,\chi} = \gamma(\chi, \frac{\check\alpha}{2}, \frac{1}{2}, \psi^{-1}) \gamma(\chi, \frac{\check\alpha}{2}, \frac{1}{2}, \psi) \gamma(\chi,-\check\alpha, 0,\psi)  \mathfrak R_{\chi},\end{equation}

\subsection{The group case} \label{ssscatteringgroup}

Let us now study the group case, $X=H = \SL(V)$, where $V$ is a two-dimensional vector space, and $G=H\times H$. (Again, $H$ acts on the right on $V$.) The asymptotic cone $X_\emptyset=H_\emptyset$ can be identified with the subspace of $\End(V)$ of elements of rank one. 
The identification \eqref{samehoro} of generic horocycles, then, is as follows: a generic horocycle, both in $H$ and in $H_\emptyset$, is equivalent to a pair $(L, L')$ of lines in $V$, together with a non-zero homomorphism $\tau: L\to V/L'$; these data correspond to the horocycle of the pair $(B\times B', Y)$, where $B, B'$ are the stabilizers of $L, L'$, and $Y\subset H$ or $Y\subset H_\emptyset$ is the subvariety of endomorphisms which induce $\tau$. Notice that these data are also equivalent to an element of $\Gm\backslash V^*\times V^h$, namely the class of $(v, \tau(v))$, where $v$ is a non-zero element of $L$. Thus, $H_\emptyset^h = \Gm\backslash V^*\times V^h$, canonically.

For a pair $(v,Y) \in V^*\times V^h$, we will write $[v:Y]\in \Ga$ for the scalar $\lambda$ such that $\lambda v \in Y$. 
We can also identify $H_\emptyset$, canonically, with the space $\Gm^\diag\backslash (V^h\times V^*)$ (notation as before), by mapping a pair $(Y,v)$ consisting of a generic horocycle and a non-zero vector to the rank-one endomorphism that sends $Y$ to $v$, in other words for the endomorphism $u\mapsto [u:Y]\cdot v$.

For any pair 
$$\tau_1: V/L_1\to L_1'\subset V$$
$$\tau_2: V/L_2\to L_2'\subset V$$
of elements of $H_\emptyset$ we obtain, by restriction, homomorphisms
$$ \tau_2|_{L_1} : L_1\to L_2', \mbox{ and }$$
$$ \tau_1|_{L_2}: L_2\to L_1'.$$
As long as $L_1\ne L_2$ and $L_1'\ne L_2'$, there is a unique element $M_{\tau_1, \tau_2}\in \GL(V)$ whose restrictions also induce these endomorphisms, and the pairs $(\tau_1,\tau_2)$ with $M_{\tau_1,\tau_2}\in \SL(V)$ form a distinguished $G^\diag$-orbit $\tilde H_\emptyset \subset H_\emptyset\times H_\emptyset$. Equivalently, the distinguished $G^\diag$-orbit is characterized by the property that
\begin{equation}\label{groupcondition} v_1\wedge v_2 = \tau_2(v_1) \wedge \tau_1(v_2)
\end{equation}
for all $v_1\in L_1, v_2\in L_2$.

The correspondence between generic $G^\diag$-orbits on $H_\emptyset\times H_\emptyset$ and $G^\diag$-orbits on $H_\emptyset\times_{\mathcal B_H \times\mathcal B_H} H_\emptyset^h$ sends a pair $(\tau_1,\tau_2)$ as above to the pair $(\tau_1, \tau_2^\circ)$, where $\tau_2^\circ\in H_\emptyset^h$ is the horocycle represented by the composition
$$ L_1 \xrightarrow{\tau_2|_{L_1}} L_2' \to V/L_1'.$$
The distinguished $G^\diag$-orbit corresponds to the set of pairs 
$$ (V/L\xrightarrow{\tau_1} L'\subset V, V\supset L\xrightarrow{\tau_2^\circ} V/L') \in H_\emptyset\times_{\mathcal B_H \times\mathcal B_H} H_\emptyset^h$$
such that any lift of $(\tau_1,\tau_2^\circ)$ to an endomorphism of $V$ has determinant $1$.

Equivalently, representing 
$$H_\emptyset\times_{\mathcal B_H \times\mathcal B_H} H_\emptyset^h =\Gm^\diag\backslash(V^h\times V^*) \times_{\mathcal B_H\times \mathcal B_H} \Gm^\diag\backslash (V^* \times V^h),$$
the distinguished $G^\diag$-orbit is obtained as follows: choose \emph{any} $H$-orbit on $V^*\times_{\mathcal B} V^h$, say with a representative $(v, Y)$, and take the $G^\diag$-orbit represented by $(Y, v, -v, Y)$. Indeed, for any $y\in Y$, the endomorphism defined by $y\mapsto v, -v\mapsto y$ has determinant one.

This gives rise to a \emph{canonical} Radon transform 
$$\mathfrak R : \mathcal F(H_\emptyset)  \to C^\infty(H_\emptyset),$$
which is given by
$$ \mathfrak R \Phi(V/L\xrightarrow\tau L') = \int_{(\P(V)\smallsetminus\{L\})\times (\P(V)\smallsetminus\{L'\})} \Phi(\sigma^\tau_{(L_1, L_1')}) d(L_1, L_1'),$$
where $\sigma^\tau_{(L_1, L_1')}: V/L_1 \to L_1'\subset V$ is the unique such morphism with the property that \eqref{groupcondition} is satisfied when $\tau_1=\sigma$ and $\tau_2=\tau$. The measure on $(\P(V)\smallsetminus\{L\})\times (\P(V)\smallsetminus\{L'\})$ is the following: choose \emph{any} pair of vectors $(v\in L, v'\in L')$, let $Y =\tau^{-1}(v')\subset V$ and let $Y'\subset V$ be the horocycle of all vectors $y'$ with $y\wedge v = v'\wedge y'$ for all $y\in Y$. Then $Y\times Y'$ is a $\Ga\times \Ga$-torsor by $(x,x')\cdot (y, y') = (y-x v, y'-x' v')$, hence carries a measure induced from our fixed measure on $F^2$, and under the projection map it can be identified with $(\P(V)\smallsetminus\{L\})\times (\P(V)\smallsetminus\{L'\})$. It is immediate that the resulting measure on $(\P(V)\smallsetminus\{L\})\times (\P(V)\smallsetminus\{L'\})$ does not depend on the choice of $(v,v')$.

Another way to explicate this Radon transform is to choose any Whittaker structure (equivalently, a symplectic form $\omega$) on $V^*$. The $\Gm$-anti-equivariant identification $V^*\xrightarrow\sim V^h$ that it induces (which we have normalized so that $v$ corresponds to the horocycle $\{u\in V| \omega(u,v)=1\}$) allows us to identify 
$$H_\emptyset =\Gm^\diag\backslash (V^h\times V^*) \simeq \Gm^\adiag\backslash (V^*\times V^*),$$ 
and defines two correspondences $\tilde V \times V^* \rightrightarrows V^*\times V^*$ and $V^*\times \tilde V \rightrightarrows V^*\times V^*$, where $\tilde V = \{(u,v)|\omega(u,v)=1\}$. The product of the resulting Radon transforms (where we denote by an index the variable that we apply the transform to):
$${\mathfrak R = \mathfrak R_1 \boxtimes \mathfrak R_2}: \mathcal F(V^*\times V^*) \to  C^\infty(V^*\times V^*)$$
descends to $\Gm^\adiag\backslash V^*\times V^*$, i.e., to a map
$$\mathfrak R: \mathcal F(V^*\times V^*) \to  C^\infty(V^*\times V^*).$$
This depends on the choice of a symplectic form, but its pullback to $H_\emptyset$ does not, and coincides with the Radon transform described above, as a simple calculation shows. 

More explicitly, take a Borel subgroup of $G$ of the form $B\times B^-$, where $B, B^-$ are two opposite Borel subgroups of $H$ with unipotent radicals $N, N^-$ and intersection $B\cap B^-=T$, and identify $H_\emptyset = T\times^{B\times B^-} G$ in such a way that the embeddings of $T$ into $H$ and $H_\emptyset$ are compatible with the isomorphism \eqref{samehoro}: $N\backslash H\sslash N^- = H_\emptyset\sslash (N\times N^-)$. If we identify $H$ with $\SL_2$, $B=$ the upper triangular subgroup and $B^-=$ the lower triangular subgroup, then we have
$$ \mathfrak R\Phi(1) = \int_{F^2} \Phi\left( \begin{pmatrix} & -1 \\ 1 \end{pmatrix} \begin{pmatrix} 1 & x \\ & 1 \end{pmatrix}, \begin{pmatrix} & -1 \\ 1 \end{pmatrix} \begin{pmatrix} 1 &  \\ y& 1 \end{pmatrix} \right) dx dy.$$

By \cite[Proposition 15.2]{DHS}, the scattering operator for the non-trivial element $w\in W_X=W_H$ is given by 
$$ \mathscr S_{w,\chi}= \mathfrak R_{1,\chi} \boxtimes \mathfrak R_{2,\chi^{-1}}^{-1}: C^\infty(A_H,\chi^{-1}\backslash H_\emptyset) \to C^\infty(A_H,\chi\backslash H_\emptyset).$$
The individual factors of this depend on the choice of a Whittaker structure, but the product does not. Moreover, the expression is symmetric in the two factors, i.e., we have $\mathfrak R_{1,\chi} \boxtimes \mathfrak R_{2,\chi^{-1}}^{-1} = \mathfrak R_{1,\chi^{-1}}^{-1}\boxtimes\mathfrak R_{2,\chi}$. 

We now compute this as a multiple of the operator $\mathfrak R_\chi= \mathfrak R_{1,\chi}\boxtimes R_{2,\chi}$. This is the calculation of the Plancherel measure, but it follows immediately from \eqref{RadonFourierspectral}: Invoking Fourier transform on $V^*$, and the fact that $\mathfrak F^* \circ \mathfrak F = 1$, where $\mathfrak F^*$ is Fourier transform defined with the character $\psi^{-1}$, instead of $\psi$, and setting $\tilde\chi = \chi\circ e^{\check\alpha}$ (so that it corresponds to the character $\chi$ of \eqref{RadonFourierspectral}),
we have 
$$ 1 = \mathfrak F_{\chi^{-1}} \circ \mathfrak F^*_\chi = \gamma(\tilde\chi^{-1},0,\psi) \mathfrak R_{2,\chi^{-1}} \circ \gamma(\tilde\chi,0,\psi^{-1}) \mathfrak R_{2,\chi} \Rightarrow$$
$$ \mathfrak R_{2,\chi^{-1}}^{-1} =  \gamma(\tilde\chi^{-1},0,\psi) \gamma(\tilde\chi,0,\psi^{-1}) \mathfrak R_{2,\chi},$$
and we get
\begin{equation}\label{scattering-group}
 \mathscr S_{w,\chi}= \gamma(\chi,\check\alpha, 0,\psi^{-1}) \gamma(\chi,-\check\alpha,0,\psi)  \cdot \mathfrak R_{1,\chi} \boxtimes \mathfrak R_{2,\chi} = \gamma(\chi,\check\alpha, 0,\psi^{-1}) \gamma(\chi,-\check\alpha,0,\psi)  \cdot \mathfrak R_\chi.
\end{equation}

\vspace{2cm}

We summarize the formulas \eqref{scattering-Whittaker}, \eqref{scattering-torus} and \eqref{scattering-group} for the scattering operators.

\begin{theorem}\label{thmscattering}
 For the cases of Table \eqref{tableX}, in terms of the canonical\footnote{To summarize: Radon transform depends on a choice of $G$-orbit on $X_\emptyset\times_{\mathcal B} X^h_\emptyset$, and there is a \emph{canonical} choice in any one of the three cases.} spectral Radon transforms $\mathfrak R_\chi$ that were described in the previous subsections,
 $$\mathfrak R_\chi: C^\infty(A_X,\chi^{-1}\backslash X_\emptyset) \to C^\infty(A_X,\chi\backslash X_\emptyset),$$  
 the scattering operator $\mathscr S_{w,\chi}$ for the non-trivial element $w$ of $W_X$ is given by the following formulas:
 
 \begin{itemize}
 \item  For the Whittaker case, $X = (N,\psi)\backslash G$, 
\begin{equation} \mathscr S_{w,\chi} = \gamma(\chi, -\check\alpha,0,\psi) \cdot \mathfrak R_{\chi};\end{equation}

 \item  for the variety $X = \Gm\backslash \PGL_2$,
\begin{equation}\mathscr S_{w,\chi} = \gamma(\chi, \frac{\check\alpha}{2}, \frac{1}{2}, \psi^{-1}) \gamma(\chi, \frac{\check\alpha}{2}, \frac{1}{2}, \psi) \gamma(\chi,-\check\alpha, 0,\psi) \cdot \mathfrak R_{\chi};\end{equation}

 \item for the group case, $X = H=\SL_2$ under the $G=H\times H$-action, 
 \begin{equation} 
  \mathscr S_{w,\chi}= \gamma(\chi,\check\alpha, 0,\psi^{-1}) \gamma(\chi,-\check\alpha,0,\psi)  \cdot \mathfrak R_\chi .
\end{equation}
\end{itemize}

\end{theorem}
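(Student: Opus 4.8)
\medskip

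The plan is to observe that this theorem merely collects the three computations of Sections~\ref{scatWhittaker}, \ref{scattorus} and \ref{ssscatteringgroup}: the three displayed formulas are exactly \eqref{scattering-Whittaker}, \eqref{scattering-torus} and \eqref{scattering-group}. So the real content is those three computations, which share a common structure that I would organize as follows.

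First I would realize the operators $\mathfrak N_\chi$ and $\mathfrak M_\chi$ of the defining diagram \eqref{fiberscatteringdiagram} concretely in each case. The operator $\mathfrak M_\chi$ is the Mellin component of a \emph{Radon transform} $\mathfrak R$ on $X_\emptyset$, obtained by pull--push along the canonical $G$-orbit on $X_\emptyset\times_{\mathcal B}X_\emptyset^h$; singling out this orbit (there is a canonical choice in each of the three cases of Table~\eqref{tableX}, together with a canonical choice of Haar measures on the horocycles) is what makes $\mathfrak R_\chi$, and hence the final formula, canonical. The operator $\mathfrak N_\chi$ is the Mellin component of a Jacquet-type integral ($\mathfrak J$ in the Whittaker case, $\mathfrak I$ for $\Gm\backslash\PGL_2$, a product of two such integrals in the group case) attached to the distinguished $G$-orbit on $X\times_{\mathcal B}X_\emptyset^h$ determined by the same data. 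Since $W_X\simeq\Z/2$, the diagram \eqref{fiberscatteringdiagram} then expresses $\mathscr S_{w,\chi}$ as a composite of the form $\mathfrak M_\chi^{-1}\circ\mathfrak N_\chi\circ\mathfrak N_{{}^{w}\chi}^{-1}\circ\mathfrak M_{{}^{w}\chi}$ (with $\mathfrak R$ replacing $\mathfrak M$ after the non-canonical identifications of Remark~\ref{remarkcaution} and its analogues), so everything reduces to expressing $\mathfrak N_\chi$ through $\mathfrak R_\chi$.

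Two inputs accomplish that. The first is the relation between the \emph{Fourier transform} $\mathfrak F$ on the ambient vector space --- or, for $\Gm\backslash\PGL_2$, on the stack $[V/\{\pm1\}]$, i.e.\ on $\bigoplus_{\alpha\in H^1(F,\Z/2)}\mathcal F(V^\alpha)^{\Z/2}$ --- and the Radon transform: at the spectral level $\mathfrak F_\chi=\gamma(\chi,0,\psi)\,\mathfrak R_\chi$ up to an explicit abelian gamma factor, which is \eqref{RadonFourierspectral} and follows from Tate's local functional equation \eqref{gammaZeta} applied fibrewise over the generic horocycles, using that $|\omega|$ restricts on each horocycle to the fixed self-dual measure. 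The second is Fourier-invariance of the Jacquet-type integral: $\mathfrak F\circ\mathfrak J=\mathfrak J$ in the Whittaker case (equation \eqref{JacquetFourier}, obtained by unfolding $\mathfrak J^*\circ\mathfrak F^*=\mathfrak J^*$), while in the other two cases I would instead directly compute the composition $\tilde{\mathfrak I}'^*_\chi\circ\mathfrak F^*$ (resp.\ use $\mathfrak F^*\circ\mathfrak F=1$), each time by a one- or two-fold application of Tate's functional equation; this is where the extra factors $\gamma(\chi,\tfrac{\check\alpha}{2},\tfrac12,\psi^{\pm1})$ in the torus case and $\gamma(\chi,\check\alpha,0,\psi^{-1})$ in the group case are produced. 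For the group case I would additionally invoke \cite[Proposition~15.2]{DHS}, which identifies $\mathscr S_{w,\chi}$ with $\mathfrak R_{1,\chi}\boxtimes\mathfrak R_{2,\chi^{-1}}^{-1}$, together with the Plancherel-type identity $\mathfrak R_{2,\chi^{-1}}^{-1}=\gamma(\tilde\chi^{-1},0,\psi)\gamma(\tilde\chi,0,\psi^{-1})\mathfrak R_{2,\chi}$. Substituting these facts into the diagram and using \eqref{RadonFourierspectral} once more yields the three stated formulas.

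The hard part is not any single calculation but the normalization bookkeeping: one must pin down ``the'' standard intertwining operator, i.e.\ the invariant measures on the horocycles, using the identification \eqref{samehoro} so that literally the \emph{same} measures are used on horocycles of $X$ and of $X_\emptyset$ --- only then do the operators in \eqref{fiberscatteringdiagram} compose correctly and the resulting $\gamma$-factor come out canonical. The second delicate point, specific to $\Gm\backslash\PGL_2$, is that $V^*\to X_\emptyset$ fails to be surjective on $F$-points, so one must carry the whole sum over $H^1(F,\Z/2)$ through the computation and work with the stacky Schwartz space on which $\mathfrak F$ is defined. Once these conventions are fixed, each of the three computations is elementary, as carried out in the preceding subsections.
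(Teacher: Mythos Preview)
Your proposal is correct and follows essentially the same approach as the paper: the theorem is literally a summary of the computations \eqref{scattering-Whittaker}, \eqref{scattering-torus}, \eqref{scattering-group} carried out in the preceding subsections, and your outline of those computations (Radon--Fourier relation \eqref{RadonFourierspectral} plus Fourier-invariance of the Jacquet integral in the Whittaker case, direct Tate computation of $\tilde{\mathfrak I}'^*_\chi\circ\mathfrak F^*$ in the torus case, and \cite[Proposition~15.2]{DHS} plus the Plancherel identity in the group case) matches the paper exactly. Your identification of the two delicate points --- the normalization of measures on horocycles via \eqref{samehoro}, and the stacky treatment of $X_\emptyset$ for $\Gm\backslash\PGL_2$ --- is also what the paper emphasizes.
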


\subsection{Relative characters}\label{ssrelchars}

The quotient $\C_\emptyset:=(X_\emptyset \times X_\emptyset)\sslash G$ has an $A_X$-action, which we normalize so that it descends from the action on the first copy of $X_\emptyset$, and the canonical generic $G$-orbit $\tilde X_\emptyset\subset X_\emptyset\times X_\emptyset$, that was also used to define Radon transform, defines in every case an isomorphism
$$ (X_\emptyset \times X_\emptyset)^\circ\sslash G \xrightarrow\sim A_X \subset \C_\emptyset,$$
where $(X_\emptyset \times X_\emptyset)^\circ$ is the open subspace of pairs of points whose stabilizers belong to distinct Borel subgroups. 

Characters of $F^\times \subset F$ pull back, generically, to $G$-invariant \emph{generalized functions} on $X_\emptyset\times X_\emptyset$, in fact, to functionals on the extended Schwartz space $\mathcal S^+(X_\emptyset\times X_\emptyset)$ obtained by applying scattering operators (on both variables) to $\mathcal S(X_\emptyset\times X_\emptyset)$,
as the following proposition shows. To formulate it, 
 let $p:(X_\emptyset \times X_\emptyset)^\circ \to A_X$
 be the quotient map; its fiber over $1\in A_X$ is the distinguished $G$-orbit $\tilde X_\emptyset$. Fix a $G$-invariant measure on $X_\emptyset$, and take the Haar measure $da$ on $A_X$ which disintegrates the Haar measure $dx\times dx$ on $X_\emptyset \times X_\emptyset$ as the product of $\delta(a) da$ with the $G$-invariant measure on $\tilde X_\emptyset$ for which the following formula holds:
 $$ \int_{\tilde X_\emptyset} \Phi(u,v) d(u,v) = \int_{X_\emptyset} \mathfrak R_1\Phi(v,v) dv.$$
 Here, as before, $\mathfrak R_1$ denotes Radon transform in the first variable.

\begin{proposition}
Given a measure $\varphi(u,v)= \Phi(u,v) du dv \in \mathcal S^+(X_\emptyset\times X_\emptyset)$, the ``open'' Mellin tranform of its push-forward $f$ to $\C_\emptyset$ (an element of the space of push-forward measures that we should denote by $\mathcal S^+(X_\emptyset\times X_\emptyset/G)$)
\begin{equation}\label{Mellin-boundary}  \check f(\chi) = \int_{A_X} f(\xi) \chi^{-1}(\xi) 
\end{equation}
converges when $\chi^{-1}$ vanishes sufficiently fast on the complement of $A_X$ in $\C_\emptyset$, and admits rational continuation.

The measure $f\in \mathcal S^+(X_\emptyset\times X_\emptyset/G)$ can be reconstructed by inverse Mellin transform:
\begin{equation}\label{invMellin}
 f = \left(\int_{\omega\widehat{A_X}} \check f(\chi) d\chi\right) da,
\end{equation}
where $\omega$ is a character such that $\omega^{-1}$ vanishes sufficiently fast on the complement of $A_X$ in $\C_\emptyset$, and $(da, d\chi)$ is a pair of dual Haar measures on $A_X$ and $\widehat{A_X}$.

Finally, the ``open'' Mellin transform can be written in terms of ``closed'' Mellin transforms and Radon transforms as follows:
\begin{equation}\label{closed}
\check f(\chi)= \int_{X_\emptyset} \mathfrak R_{1, {^w\chi} \delta^{\frac{1}{2}}} \Phi(v, v) dv,
\end{equation}
where $\mathfrak R_{1, {^w\chi} \delta^{\frac{1}{2}}}: \mathcal F(X_\emptyset\times X_\emptyset) \to \mathcal F((A_X, {^w\chi} \delta^{\frac{1}{2}} \backslash X_\emptyset)\times X_\emptyset)$ is the spectral Radon transform in the first variable, encountered in the previous subsections, defined with that Haar measure $da$ on $A_X$ described above.
\end{proposition}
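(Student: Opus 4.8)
Throughout, write $\varphi = \Phi\,dx\,dx\in\mathcal S^+(X_\emptyset\times X_\emptyset)$ and $f=p_!\varphi$ for its push-forward to $\C_\emptyset$. The three assertions will be obtained in the order: (i) convergence of the open Mellin transform in a tube domain; (ii) the formula \eqref{closed}; (iii) rational continuation and the inversion formula \eqref{invMellin}.

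\emph{Convergence.} Since $\mathcal S^+(X_\emptyset\times X_\emptyset)$ is generated from $\mathcal S(X_\emptyset\times X_\emptyset)$ by the scattering operators, and since each scattering operator is given fiberwise by the rational operators $\mathscr S_{w,\chi}$, the restriction $f|_{A_X}$ is a smooth measure on $A_X$ which (a) has support with compact closure in $\C_\emptyset$, so it does not grow ``at infinity'', and (b) near the boundary $\C_\emptyset\smallsetminus A_X$ is dominated by a finite sum of characters of $A_X$ times smooth measures (for $\varphi\in\mathcal S(X_\emptyset\times X_\emptyset)$ this is just the statement that $f$ is a Schwartz measure on $\C_\emptyset$, using that $X_\emptyset\to X_\emptyset\sslash N_G$ is smooth; the general case follows by applying the rational fiberwise scattering operators). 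Consequently the integral \eqref{Mellin-boundary} converges absolutely precisely when $\chi^{-1}$ decays fast enough on $\C_\emptyset\smallsetminus A_X$, i.e. for $\chi$ in an explicit tube in $\widehat{A_X}_\CC$, and on that tube Fubini applies to the double integrals appearing below.

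\emph{Proof of \eqref{closed}.} Unwind the disintegration of $dx\times dx$ along $p\colon (X_\emptyset\times X_\emptyset)^\circ\to A_X$. By the defining property of the Haar measure $da$ singled out before the Proposition, for $a\in A_X$ the fiber $p^{-1}(a)$ is the translate $a\cdot\tilde X_\emptyset$ (acting on the first copy of $X_\emptyset$), and $f|_{A_X}(a)=\big(\int_{a\cdot\tilde X_\emptyset}\Phi\big)\,\delta(a)\,da$. Plugging this into \eqref{Mellin-boundary} and applying the normalization $\int_{\tilde X_\emptyset}\Psi(u,v)\,d(u,v)=\int_{X_\emptyset}\mathfrak R_1\Psi(v,v)\,dv$ with $\Psi$ the $a$-translate of $\Phi$, one rewrites $\int_{a\cdot\tilde X_\emptyset}\Phi$ as $\int_{X_\emptyset}(\mathfrak R_1\Phi)(a\cdot v,v)\,dv$ (Radon transform intertwines the $A_X$-actions on the two sides up to its anti-equivariance, so only the normalizing character changes). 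Interchanging the $A_X$- and $X_\emptyset$-integrals on the tube of convergence gives
$$
\check f(\chi)=\int_{X_\emptyset}\Big(\int_{A_X}(\mathfrak R_1\Phi)(a\cdot v,v)\,\chi^{-1}\delta(a)\,da\Big)\,dv .
$$
The inner integral is, by definition, the spectral Radon transform $\mathfrak R_{1,\mu}\Phi$ evaluated at $(v,v)$, where the normalizing character $\mu$ is read off from this Mellin averaging: the shift $\delta^{\frac12}$ comes from converting between the normalized action \eqref{action-normalized-measures} used to define $\mathfrak R_{1,\mu}$ and the unnormalized action, and the Weyl element $w$ appears because $A_X$ acts on $X_\emptyset$ through the $w_0$-twist of its identification with $B/N$ (as emphasized around \eqref{samehoro}), so that the character $\chi^{-1}$ paired against the action on $X_\emptyset$ matches the character ${}^w\chi$ in the description $C^\infty(A_X,\mu\backslash X_\emptyset)$ of the target; one finds $\mu={}^w\chi\,\delta^{\frac12}$, proving \eqref{closed}. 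This step is essentially bookkeeping of normalizations; its only genuine content is the disintegration identity and the definition of $da$.

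\emph{Rational continuation and inversion.} By the results recalled in the previous subsections and in \cite{DHS}, the fiberwise (spectral) Radon/intertwining operators $\mathfrak R_{1,\mu}$ are given by an integral convergent for $\mu$ in a cone and continue rationally in $\mu\in\widehat{A_X}_\CC$; since $\mu={}^w\chi\,\delta^{\frac12}$ depends algebraically on $\chi$, the right side of \eqref{closed} is rational in $\chi$, hence so is $\check f$. For \eqref{invMellin}, note that $f|_{A_X}$ is a measure on the torus $A_X$ whose Mellin transform converges on a tube, is rational, and (being, for Schwartz $\varphi$, a Schwartz measure, and in general obtained from such by the rational operators $\mathscr S_{w,\chi}$) has the decay on bounded vertical strips needed to apply Mellin inversion on $A_X$, cf.\ Theorem \ref{PW-torus} and \S\ref{ssMellin}. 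Choosing $\omega$ inside the tube of convergence, so that $\omega^{-1}$ vanishes sufficiently fast on $\C_\emptyset\smallsetminus A_X$, and taking $(da,d\chi)$ dual Haar measures, inversion yields $f|_{A_X}=\big(\int_{\omega\widehat{A_X}}\check f(\chi)\,d\chi\big)\,da$; since $f$ is determined by its restriction to the dense open $A_X$, this is \eqref{invMellin}.

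\emph{Main obstacle.} The analytic steps (convergence on a tube, rational continuation, Mellin inversion) are routine once the properties of the spectral Radon transforms are in hand. The delicate point is the normalization bookkeeping in the proof of \eqref{closed}: pinning down exactly why the twist by $w$ and the shift by $\delta^{\frac12}$ arise, and checking that the Haar measure used to define $\mathfrak R_{1,{}^w\chi\delta^{1/2}}$ is literally the measure $da$ singled out in the statement rather than a scalar multiple of it.
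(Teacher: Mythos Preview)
Your proposal is correct and follows essentially the same approach as the paper: disintegrate the push-forward along $p$ using the normalization of $da$ to obtain \eqref{closed}, deduce rationality from the known rationality of the spectral Radon transforms (via the factorized form \eqref{closed-alt}), and recover $f$ by Mellin inversion from its growth and support properties. One small correction: in your intermediate step the paper obtains $\int_{X_\emptyset}\mathfrak R_1\Phi(a^{-1}\cdot v,v)\,dv$ rather than $(\mathfrak R_1\Phi)(a\cdot v,v)$ --- the anti-equivariance of Radon transform flips the sign of the action --- so you should double-check that your bookkeeping actually lands on ${}^w\chi\,\delta^{1/2}$ rather than its inverse; as you yourself note, this is exactly the delicate normalization point. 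For the inversion step, the paper argues slightly more directly than via Theorem~\ref{PW-torus}: it just observes that $\omega^{-1}f\in L^2(A_X)$ for suitable $\omega$, which suffices for standard Fourier inversion.
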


The terms ``open'' and ``closed'', here, refer to the ``Bruhat cell'' $X_\emptyset \times X_\emptyset$ on which the integration takes place. Notice that only the ``open'' Mellin transform is a functional on the space of push-forward measures $\mathcal S^+(X_\emptyset\times X_\emptyset/G)$; the closed Bruhat cell lives over a proper subvariety of $\C_\emptyset$, and information about it is lost when we take push-forwards.
The spectral Radon transform $\mathfrak R_{1,\chi}$, here, is (by abuse of notation) the composition of what was denoted before by $\mathfrak R_\chi$ with Mellin transform \eqref{Mellin-Splus}, applied to the first variable; when $\Phi(u,v) = \Phi_1(u) \Phi_2(v)$ the integral \eqref{closed} can also be written in terms of the Mellin transforms of $\Phi_1$, $\Phi_2$ as 
\begin{equation}\label{closed-alt}\int_{A_X\backslash X_\emptyset} \mathfrak R_{1, {^w\chi} \delta^{\frac{1}{2}}} \left(\check\Phi_1(\chi \delta^{-\frac{1}{2}})\right) \cdot \check\Phi_2({^w\chi^{-1}} \delta^{-\frac{1}{2}}).
\end{equation}

Notice that, unlike the Mellin transform \eqref{Mellin-Splus} on $X_\emptyset$, in the definition of Mellin transform for $\mathcal S^+(X_\emptyset\times X_\emptyset/G)$, we do not normalize the action of $A_X$ on measures or functions on $\C_\emptyset$, which is why, as we will see, the Mellin transform defined here is a relative character for $C^\infty (A_X, \chi \delta^{-\frac{1}{2}} \backslash X_\emptyset) \simeq I(\chi^{-1} \delta^{\frac{1}{2}})$, the principal series representation obtained by normalized induction from the character $\chi^{-1} \delta^{\frac{1}{2}}$ of $A_X$.

\begin{proof}
  We compute:
$$ \check f(\chi) = \int_{X_\emptyset\times X_\emptyset} \Phi(u, v) \chi^{-1}(p(u,v)) du dv = $$
$$= \int_{A_X} \left(\int_{\tilde X} \Phi(a\cdot u, v) d(u,v)\right) \chi^{-1}(a) \delta(a) da=$$
$$ = \int_{A_X} \int_{X_\emptyset} \mathfrak R_1\Phi(a^{-1} v, v) dv \chi^{-1}(a) da = \int_{X_\emptyset} \mathfrak R_{1, {^w\chi} \delta^{\frac{1}{2}}} \Phi(v, v) dv.$$

This is rational in $\chi$, as the expression \eqref{closed-alt} shows; recall that an element of $\mathcal S^+(X_\emptyset)$ can be written as $\varphi_1\oplus \mathfrak S_w \varphi_2$, where both $\varphi_1$ and $\varphi_2$ belong to $\mathcal S(X_\emptyset)$, and that the scattering map $\mathfrak S_w$ is decomposed \eqref{scatteringMellin} in terms of the fiberwise scattering maps $\mathscr S_{w,\chi}$, which are rational in $\chi$.

The formula for the inverse Mellin transform \eqref{invMellin} follows from the fact that elements of $\mathcal S^+(X_\emptyset\times X_\emptyset)$ are of moderate growth, and up to a rapidly decaying measure are supported on a compact subset of an affine embedding $X^a_\emptyset\times X^a_\emptyset$; thus, their push-forwards will be of moderate growth on $A_X$ and, up to rapid decay, supported on a compact subset of the affine embedding $\C_\emptyset^a:= X^a_\emptyset\times X^a_\emptyset\sslash G$.\footnote{In the examples of Table \eqref{tableX}, this coincides with $\C_\emptyset$.} Thus, the measure $\omega^{-1}f$ will be in $L^2(A_X)$, for $\omega$ as in the statement of the proposition, and the result follows from standard Fourier analysis.

 \end{proof}

Let $M_\chi$ denote the functional $f\mapsto \check f(\chi \delta^{\frac{1}{2}})$ on $\mathcal S^+(X_\emptyset\times X_\emptyset/G)$. 
As is clear from \eqref{closed-alt}, it is a \emph{relative character} for the representation
$$\pi_{\chi}:=C^\infty (A_X, \chi \backslash X_\emptyset) \simeq I(\chi^{-1}),$$
i.e., its pullback to $\mathcal S^+(X_\emptyset\times X_\emptyset)$ factors through a morphism 
$$ \mathcal S^+(X_\emptyset\times X_\emptyset) \to \pi_{\chi} \otimes \widetilde{\pi_{\chi}} \xrightarrow{\left<\,\, , \,\,\right>} \CC.$$

Now we restrict to the case when $F$ is non-Archimedean.\footnote{Replacing the Schwartz space with the Harish-Chandra Schwartz space, and restricting to $\chi$ unitary, the results that follow extend to the Archimedean case, using asymptotics of admissible generalized matrix coefficients. We will not need them, so we avoid introducing more material.} 
We would like to consider its further pullback to $\mathcal S(X\times X)$ via the asymptotics map $e_\emptyset^*\otimes e_\emptyset^*$, and determine its role in the Plancherel formula  of $X$. For that purpose, denote by $M_\chi^\cl$ the corresponding ``closed'' Mellin transform that we saw in \eqref{closed} (without the composition with Radon transform), that is,
$$ M_\chi^\cl(f) = \int_{X_\emptyset} \check\Phi_1({\chi} ) \cdot \check\Phi_2({\chi^{-1}} ),$$
when $f$, here, denotes the image of $\Phi_1(u)\Phi_2(v) du dv$ \emph{in the $G$-coinvariant space $\mathcal S^+(X_\emptyset\times X_\emptyset)_G$}. As mentioned above, here we cannot identify $f$ with its push-forward, because this would lose the information about ``closed'' Mellin transforms. 

Let $I_\chi$, $I_\chi^\cl$ be the pullbacks of $M_\chi$, $M_\chi^\cl$, respectively, to $\mathcal S(X\times X)$ via the asymptotics map $e_\emptyset^*\boxtimes e_\emptyset^*$.

The following theorem, proven in \cite[\S 14.1]{SV}, states that the relative characters $I_\chi^\cl$ decompose the ``most continuous summand'' of the space  $L^2(X)$, in the sense of the Plancherel decomposition. The ``most continuous summand'' is a canonical subspace $L^2(X)_\emptyset\subset L^2(X)$, whose definition I will not repeat here. Fix an invariant measure $dx$ on $X$ and a compatible measure on $X_\emptyset$ \cite[\S 4.2]{SV}, and use them to consider $L^2(X)$, $L^2(X_\emptyset)$ as spaces of measures.

\begin{theorem}\label{Plancherel}
For $\varphi_1, \varphi_2\in L^2(X)_\emptyset$, we have 
$$\int_X \frac{\varphi_1 \cdot \varphi_2}{dx} = \frac{1}{|W_X|}\int_{\widehat{A_X}} I_\chi^\cl(\varphi_1\otimes\varphi_2)  d\chi,$$
where the Haar measure $d\chi$ on the unitary dual $\widehat{A_X}$ is the one dual to the Haar measure $da$ on $A_X$. 
\end{theorem}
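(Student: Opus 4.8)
The plan is to recognize the statement as a translation, into the notation of the present paper, of the explicit decomposition of the ``most continuous part'' $L^2(X)_\emptyset$ of $L^2(X)$ established in \cite[\S 14.1]{SV}; the only real work is to match the relative character $I_\chi^\cl$ and the Haar measure $d\chi$ appearing here with the ones used there, using the scattering formulas of Theorem \ref{thmscattering} and the Mellin/Radon formalism of the preceding proposition.

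I would organize the argument as follows. First, invoke from \cite{SV} the structural input: the asymptotics map $e_\emptyset^*$ extends to $L^2(X)_\emptyset$ and identifies it, up to an explicit scalar normalization of the relevant (wave-packet) map, with the subspace $L^2(X_\emptyset)^{W_X}$ of scattering-invariant vectors — so that computing $\int_X \varphi_1\varphi_2/dx$ for $\varphi_i\in L^2(X)_\emptyset$ reduces to a computation on the horospherical space $X_\emptyset$. Second, on $X_\emptyset$ the Plancherel decomposition is simply Mellin inversion in the $A_X$-direction \eqref{Mellin-Splus}: writing $e_\emptyset^*\varphi_i$ in the form $\Phi_i\,dx$, the $X_\emptyset$-inner product is
$$ \int_{\widehat{A_X}}\Bigl(\int_{X_\emptyset}\check\Phi_1(\chi)\cdot\check\Phi_2(\chi^{-1})\Bigr)\,d\chi, $$
with $d\chi$ dual to $da$; the inner integral is, by definition, $M_\chi^\cl$ evaluated on the class of $\Phi_1\otimes\Phi_2$ in $\mathcal S^+(X_\emptyset\times X_\emptyset)_G$, and — because the two Mellin fibers at $\chi$ and ${}^w\chi$ are matched by the (unitary) scattering operators $\mathscr S_{w,\chi}$ of Theorem \ref{thmscattering} once we restrict to scattering-invariant inputs — this integrand is $W_X$-invariant in $\chi$, which is what produces the factor $|W_X|^{-1}$ when one passes between $\widehat{A_X}$ and $\widehat{A_X}/W_X$. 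Third, unwind the definition $I_\chi^\cl = (e_\emptyset^*\boxtimes e_\emptyset^*)^*M_\chi^\cl$: this identifies the inner integral above with $I_\chi^\cl(\varphi_1\otimes\varphi_2)$, and assembling the three steps (with the bookkeeping of the $W_X$-overcounting and of the normalization of the asymptotics map) yields the stated formula. One should also remark that both sides depend on $\varphi_i\in\mathcal S(X)$ only through their $L^2(X)_\emptyset$-components, which legitimizes running the argument on Schwartz vectors and then passing to $L^2$-closure.

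The substantive content — and the reason the theorem is quoted from \cite{SV} rather than proved from scratch here — is the first step: that the most continuous part of $L^2(X)$ is supported exactly on the unitary principal series $\pi_\chi = I(\chi^{-1})$, $\chi\in\widehat{A_X}$, each appearing with the multiplicity dictated by the $X_\emptyset$-distinguished functionals, and that the asymptotics map is (up to the explicit normalization) an isometry onto $L^2(X_\emptyset)^{W_X}$, with no residual Plancherel density beyond the flat measure $d\chi$. This is proved in \cite{SV} via the small Maass--Selberg relations and Bernstein's continuity principle; in the rank-one cases of Table \eqref{tableX} the needed unitarity of $\mathfrak S_w$ can alternatively be read off from the $\gamma$-factor formulas of Theorem \ref{thmscattering} together with the unitarity of Fourier transform on $V$, but no independent argument is required for the statement at hand. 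I expect this normalization bookkeeping — reconciling the present $I_\chi^\cl$, $da$ and $d\chi$ with those of \cite{SV} — to be the only genuinely fiddly part.
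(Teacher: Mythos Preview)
Your proposal is correct and aligns with the paper's approach: the paper does not supply its own proof of this theorem but simply cites \cite[\S 14.1]{SV}, and your outline is a reasonable sketch of how the statement there translates into the present notation. Your identification of the asymptotics-isometry onto $L^2(X_\emptyset)^{W_X}$ as the substantive input from \cite{SV}, with the rest being Mellin inversion on $X_\emptyset$ and bookkeeping, is accurate.
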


Thus, the relative characters $I_\chi^\cl$ are, in some sense, the canonical characters which decompose the most continuous spectrum of $X$ against Haar--Plancherel measure $d\chi$. 
The whole point of the present section was to compare the pullbacks $I_\chi$ of the ``open'' Mellin transforms $M_\chi$ to these relative characters:

\begin{theorem} \label{thmpullbackfrombd}
For each of the cases of Table \eqref{tableX}, define a rational function $\mu_X(\chi)$ on $\widehat{A_X}_\CC$, as follows:
  \begin{itemize}
 \item  For the Whittaker case, $X = (N,\psi)\backslash G$, 
\begin{equation} \mu_X(\chi) = \gamma(\chi, \check\alpha,0,\psi);\end{equation}

 \item  for the variety $X = \Gm\backslash \PGL_2$,
\begin{equation}\mu_X(\chi) = \gamma(\chi, -\frac{\check\alpha}{2}, \frac{1}{2}, \psi^{-1}) \gamma(\chi, -\frac{\check\alpha}{2}, \frac{1}{2}, \psi) \gamma(\chi,\check\alpha, 0,\psi);\end{equation}

 \item for the group case, $X = H=\SL_2$ under the $G=H\times H$-action, 
 \begin{equation} 
  \mu_X(\chi) = \gamma(\chi,-\check\alpha, 0,\psi^{-1}) \gamma(\chi,\check\alpha,0,\psi).
\end{equation}
\end{itemize}

Then the pullbacks of the ``open'' and ``closed'' Mellin transforms are related by the formula
\begin{equation}\label{relationopenclosed}
I_\chi(\Phi_1 \otimes \Phi_2) = \mu_X(\chi)^{-1} I_\chi^\cl(\Phi_1\otimes\Phi_2).\end{equation}

In particular, the relative characters $I_\chi$ decompose the space $L^2(X)_\emptyset$ \emph{with Plancherel measure $\mu_X(\chi)d\chi$}:
$$\int_X \frac{\varphi_1 \cdot \varphi_2}{dx} = \frac{1}{|W_X|}\int_{\widehat{A_X}} I_\chi(\varphi_1\otimes\varphi_2)  \mu_X(\chi) d\chi.$$
\end{theorem}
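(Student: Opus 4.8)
The plan is to derive the identity \eqref{relationopenclosed} by inserting the scattering relation of Theorem \ref{thmscattering} into the "open" Mellin transform formula \eqref{closed}, and then to bootstrap the Plancherel statement from Theorem \ref{Plancherel}. The key structural fact I would exploit is that the asymptotics map $e_\emptyset^*\colon \mathcal S(X)\to \mathcal S^+(X_\emptyset)$ has image in the $W_X$-invariants for the scattering action (by \cite[Theorem 1.8]{DHS}), so for $\varphi = e_\emptyset^*\Phi$ one has $\mathfrak S_w\varphi = \varphi$, and fiberwise $\mathscr S_{w,\chi}\check\varphi({}^{w}\chi) = \check\varphi(\chi)$. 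First I would take a pure tensor $\Phi_1\otimes\Phi_2$ on $X\times X$, set $\varphi_i = e_\emptyset^*\Phi_i$, and write out both $I_\chi$ and $I_\chi^{\cl}$ in terms of the Mellin transforms $\check\varphi_1(\chi\delta^{-1/2})$ and $\check\varphi_2({}^{w}\chi^{-1}\delta^{-1/2})$ using \eqref{closed-alt} for the former and the formula for $M_\chi^{\cl}$ just before the statement for the latter. The difference between the two is precisely the presence of the spectral Radon transform $\mathfrak R_{1,{}^{w}\chi\delta^{1/2}}$ in one variable in the "open" version.

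The crux is then to replace that Radon transform by a scalar. Here I would use the $W_X$-invariance: applying $\mathscr S_{w,\cdot}$ to $\check\varphi_1$ and using $\mathscr S_{w,\chi} = (\text{scalar } \gamma\text{-factor})\cdot\mathfrak R_\chi$ from Theorem \ref{thmscattering}, the operator $\mathfrak R_{1,\chi}$ acting on $\check\varphi_1({}^{w}\chi)$ equals $(\gamma\text{-factor})^{-1}\check\varphi_1(\chi)$ — i.e. on functions in the image of $e_\emptyset^*$, Radon transform acts by the inverse of the scalar part of the scattering operator. Tracking the exact characters (the shift by $\delta^{1/2}$ coming from the non-normalized $A_X$-action in the definition \eqref{Mellin-boundary}, and the $w$-twist) I would check that the scalar that pops out is exactly $\mu_X(\chi)^{-1}$ as defined in the three cases. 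Concretely this is a matching of the $\gamma$-factor expressions in Theorem \ref{thmscattering} against those in the statement, under the substitution $\chi\mapsto\chi\delta^{\pm1/2}$ and $\check\alpha\mapsto\pm\check\alpha$; for rank one, $\delta = |e^{\check\alpha}|$ wait — $\delta$ corresponds to $e^{2\rho}$ and $2\rho = \alpha$, so $\delta^{1/2}$ twists the character $\chi$ on $A_X$ in a way that precisely accounts for the difference between, e.g., $\gamma(\chi,-\check\alpha,0,\psi)$ in \eqref{scattering-Whittaker} and $\gamma(\chi,\check\alpha,0,\psi)$ in the Whittaker clause of $\mu_X$. This sign/shift bookkeeping is the one genuinely delicate point, and I would do it separately in each of the three cases rather than seek a uniform argument.

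Once \eqref{relationopenclosed} is established on pure tensors, it extends to all of $\mathcal S^+(X_\emptyset\times X_\emptyset)$ by linearity and density (both sides are continuous functionals, and pure tensors span a dense subspace), and then descends through $e_\emptyset^*\boxtimes e_\emptyset^*$ to $\mathcal S(X\times X)$. The Plancherel consequence is then immediate: substitute \eqref{relationopenclosed} into the formula of Theorem \ref{Plancherel}, namely $\int_X \varphi_1\varphi_2/dx = |W_X|^{-1}\int_{\widehat{A_X}} I_\chi^{\cl}(\varphi_1\otimes\varphi_2)\,d\chi$, to get $|W_X|^{-1}\int_{\widehat{A_X}} I_\chi(\varphi_1\otimes\varphi_2)\,\mu_X(\chi)\,d\chi$. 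One should remark that $\mu_X(\chi)$, being a ratio of $\gamma$-factors of abelian local functional equations, has absolute value $1$ on the unitary dual $\widehat{A_X}$ (each $\gamma$-factor at a unitary character has modulus one away from poles, which do not meet the unitary axis here), so the integral over $\widehat{A_X}$ makes sense and the reindexing is harmless.

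The main obstacle I anticipate is \emph{not} any deep harmonic analysis — everything substantive is imported from \cite{DHS} and \cite{SV} — but rather getting the normalizations exactly right: the non-normalized versus normalized $A_X$-actions (the paper is explicit that \eqref{Mellin-boundary} is \emph{not} normalized, unlike \eqref{Mellin-Splus}), the $w_0$-twist in the horospherical action, the direction conventions for $\mathfrak R_\chi$ versus $\mathfrak M_\chi$ flagged in Remark \ref{remarkcaution}, and the consistent choice of compatible Haar measures on $A_X$, on horocycles, and on $\widehat{A_X}$ so that $\mathfrak R_{1,\chi}$ in \eqref{closed} is the \emph{same} operator as in Theorem \ref{thmscattering}. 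I would therefore organize the proof so that the character-chasing is isolated into a short lemma "$\mathfrak R_\chi$ acts on $\operatorname{im}(e_\emptyset^*)$ by $\mu_X(\chi)$ up to the appropriate twist," and cite Theorem \ref{thmscattering} plus the $W_X$-invariance of asymptotics as the only inputs.
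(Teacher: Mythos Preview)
Your proposal is correct and follows essentially the same route as the paper: use the $W_X$-invariance of the image of $e_\emptyset^*$ together with the formula $\mathscr S_{w,\chi} = (\text{scalar})\cdot\mathfrak R_\chi$ from Theorem \ref{thmscattering} to turn the Radon transform in \eqref{closed} into a scalar, then read off \eqref{relationopenclosed} and substitute into Theorem \ref{Plancherel}. One small correction to your bookkeeping: the discrepancy between $\gamma(\chi,-\check\alpha,0,\psi)$ in \eqref{scattering-Whittaker} and $\gamma(\chi,\check\alpha,0,\psi)$ in the definition of $\mu_X$ is accounted for by the $w$-twist (the paper writes $\mathscr S_{w,\chi} = \mu_X({}^w\chi)\cdot\mathfrak R_\chi$), not by the $\delta^{1/2}$-shift, which instead moves the $s$-parameter; you would catch this in the case-by-case check you plan.
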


\begin{proof}
 Recall that the image of the asymptotics map is invariant under the scattering operators. In particular, if $\Phi_i = e_\emptyset^* \varphi_i$ ($i=1,2$) then $\mathscr S_{w,\chi}\check\Phi_i({^{w^{-1}}\chi}) = \check\Phi_i(\chi)$. 
 
 On the other hand, by Theorem \ref{thmscattering}, $\mathscr S_{w,\chi} = \mu_X({^w\chi}) \cdot \mathfrak R_\chi$. The $G$-equivariant scattering operator, applied to the first variable:
 $$\mathfrak S_{w,1}: \mathcal S^+(X_\emptyset\times X_\emptyset) \to \mathcal S^+(X_\emptyset\times X_\emptyset)$$ 
 descends to $G^\diag$-coinvariants, and hence \eqref{closed} reads 
$$ M_\chi = \mu_X({\chi})^{-1} M^\cl_{^w\chi} \circ \mathfrak S_{w,1}.$$

 Therefore, the pullbacks to $\mathcal S(X\times X)$ satisfy \eqref{relationopenclosed}.
 
 The final statement follows from the Plancherel formula of the previous theorem.
\end{proof}

In our applications of this theorem, we will want to compare the transfer operator or Hankel transform for two relative trace formulas:
$$ \mathcal T: \mathcal S(X\times X/G) \to \mathcal S(Y\times Y/G')$$
(typically with non-standard spaces of test measures, which do not appear in our notation here), 
with an abelian transfer operator for the corresponding degenerations
$$ \mathcal T_\emptyset: \mathcal S^+(X_\emptyset\times X_\emptyset/G) \to \mathcal S(Y_\emptyset\times Y_\emptyset/G'),$$
which is chosen so that the following diagram commutes:
\begin{equation}\label{Bcommute} \xymatrix{
\mathcal S(X\times X/G) \ar[rr]^{e_\emptyset^*\otimes e_\emptyset^*}\ar[d]^{\mathcal T} && \mathcal S^+(X_\emptyset\times X_\emptyset/G) \ar[d]^{\mathcal T_\emptyset} \\
\mathcal S(Y\times Y/G') \ar[rr]^{e_\emptyset^*\otimes e_\emptyset^*} && \mathcal S^+(Y_\emptyset\times Y_\emptyset/G') }.
\end{equation}

Here, by abuse of notation, we denote by $e_\emptyset^*\otimes e_\emptyset^*$ the descent of the morphism
$$ e_\emptyset^*\otimes e_\emptyset^*: \mathcal S(X\times X)\to \mathcal S^+(X_\emptyset\times X_\emptyset)$$
to the spaces of push-forward measures. The fact that it descends follows from the following fact, true for each of the spaces $X$ of Table \ref{tableX}, but not for their degenerations $X_\emptyset$:

\begin{theorem}\label{density}
 For each of the spaces $X$ of Table \ref{tableX}, the map 
 $$\mathcal S(X\times X)_G\to \mathcal S(X\times X/G),$$ 
 from coinvariants for the diagonal $G$-action to the space of push-forward measures, is an isomorphism.
\end{theorem}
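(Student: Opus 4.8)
The plan is to prove injectivity and surjectivity of the natural map $\mathcal S(X\times X)_G\to \mathcal S(X\times X/G)$ separately, the surjectivity being immediate from the definition (the target is, by definition, the image of the push-forward from $\mathcal S(X\times X)$, and the push-forward clearly kills the subspace generated by $v-g\cdot v$, so it factors through the coinvariants). Thus the whole content is \emph{injectivity}, i.e.\ the statement that if a Schwartz measure on $X\times X$ has vanishing push-forward to $\mathfrak C_X = X\times X\sslash G$, then it lies in the closure (in the Archimedean case) of the span of vectors $\varphi - g\cdot\varphi$. Equivalently, the fibers of all $G$-invariant functionals (distributions) on $X\times X$ are detected by their push-forward, i.e.\ every $G$-invariant distribution on $X\times X$ is a pull-back of a distribution on $\mathfrak C_X$. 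This is exactly the assertion of \emph{density of (stable) orbital integrals} for the $G$-action on $X\times X$: invariant distributions are supported on the ``stable'' relations, and regular (generic) orbital integrals are dense. In the Whittaker case this is the density of regular orbital integrals for the Kuznetsov formula, which is referenced in \S\ref{sstwistedpf} and which underlies the coinvariant identifications \eqref{Whittakercoinvariants}.

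The strategy I would follow is to treat the three lines of Table \eqref{tableX} uniformly via the geometry of the $G$-action on $X\times X$, reducing in each case to a statement about a group or symmetric-space action that is already in the literature. For $X$ a reductive group $H$ with $G=H\times H$ (the group case), $X\times X/G = \frac{H}{H}$ is the adjoint quotient, and the claim is precisely Weyl integration plus density of (stable) orbital integrals on $H$ — this is classical, due in the $p$-adic setting to work going back to Harish-Chandra, Clozel, and for the \emph{stable} version to Arthur; one cites the relevant density theorem. For $X=(N,\psi)\backslash G$, the Whittaker case, $X\times X/G = N\backslash G\sslash N$ and the claim is the density of nondegenerate (Kuznetsov) orbital integrals — this is well known and the excerpt itself points to it; one should note the compatibility of the twisted push-forward of \S\ref{sstwistedpf} with the coinvariant description \eqref{Whittakercoinvariants}, which is where the line bundle $\mathcal L_\psi$ enters, but this changes nothing essential about the density argument. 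For $X=\SL_2$ under $G = \SL_2^2/\{\pm1\}^\diag \simeq \SO_4$, one recognizes $X\times X/G$ as $(\SL_2\times\SL_2)\sslash \SL_2^{\mathrm{diag}}$, which by Bruhat/Cartan decomposition on $\SL_2$ is again a one-dimensional quotient; here the density statement is a special (very low rank) case of density of orbital integrals for a spherical variety, or can be reduced by an explicit change of variables to the group case for $\SL_2$.

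Concretely, the key steps would be: (1) reduce to injectivity; (2) recall that injectivity is equivalent to: every $G$-invariant (tempered, in the Archimedean case) distribution on $X\times X$ is the pull-back of a distribution on $\mathfrak C_X$, equivalently generic/regular orbital integrals are dense in $\mathcal S(X\times X)^*{}^G$; (3) stratify $\mathfrak C_X$ by the (finitely many) strata of orbit-type, observe that over the open (regular, ``stable'') stratum $\mathfrak C_X^\circ$ the $G$-action on the preimage has fibers that are single stable orbits (a finite union of $G(F)$-orbits, or a single one when the relevant Galois cohomology vanishes), so that push-forward is an isomorphism onto Schwartz measures there (using smoothness of the map $X\times X\to\mathfrak C_X$ over $\mathfrak C_X^\circ$, exactly as invoked for $N\backslash G\to N\backslash G\sslash N$ in \S\ref{sstwistedpf}); (4) handle the lower strata by a Noetherian induction / ``descent'' argument à la Harish-Chandra–Howe, controlling invariant distributions supported on the closed strata by restriction to slices and reducing to the already-treated cases of Levi or smaller spherical subgroups; (5) in the non-Archimedean case conclude directly, and in the Archimedean case take closures and invoke the nuclear Fréchet structure (so that the algebraic coinvariants, completed, match the topological push-forward), citing \cite{AGSchwartz, SaStacks}. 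The main obstacle is step (4): near the singular strata of $\mathfrak C_X$ the orbit structure degenerates (e.g.\ near $\zeta=0$ in the Whittaker coordinate, or near central/unipotent classes in the group case, or near the ``small'' $\SO_4$-orbit on $\SL_2$), and one must show no extra invariant distributions (transverse derivatives of delta-type functionals along the singular locus) survive; this is exactly where one must either invoke an existing density theorem as a black box or run a careful local model computation. Since all three $X$ have rank one and the singular locus is essentially a point, this local analysis is short, and I would present it as a direct verification in coordinates after citing the standard density results for the ``generic'' part.
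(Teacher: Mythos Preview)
Your overall strategy is correct and matches the paper's: surjectivity is tautological, and injectivity amounts to density of regular (stable) orbital integrals, which is then verified case by case via known results. However, you have omitted one of the three rows of Table~\eqref{tableX}. You handle the Whittaker case and the group case $X=\SL_2$, but you never discuss $X=\Gm\backslash\PGL_2$ under $G=\PGL_2$. The paper treats this separately: the quotient stack $[\Gm\backslash\PGL_2/\Gm]$ is shown to look locally like $[\mathbbm A^2/\Gm]$ with the hyperbolic action $(x,y)\cdot a=(ax,a^{-1}y)$, and density of regular orbital integrals is then a direct local computation (carried out in \cite{SaBE1}). Your stratification framework in steps (3)--(4) would in principle cover this, but you need to actually name the case and perform (or cite) the local analysis near the singular point $xy=0$.

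A minor correction for the group case: your identification of $X\times X/G$ as ``$(\SL_2\times\SL_2)\sslash\SL_2^{\mathrm{diag}}$'' is not quite right. With $G=\SL_2^2/\{\pm1\}^\diag$ acting on $X=\SL_2$ by $x\cdot(h_1,h_2)=h_1^{-1}xh_2$, the map $(x_1,x_2)\mapsto x_1x_2^{-1}$ identifies $[X\times X/G^\diag]$ with $[\frac{\SL_2}{\PGL_2}]$, the adjoint quotient of $\SL_2$ by $\PGL_2$-conjugation. The paper then invokes Harish-Chandra's density of regular semisimple orbital integrals, viewing $\SL_2(F)/\{\pm1\}$ as an open subset of $\PGL_2(F)$. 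Your conclusion that this ``reduces to the group case for $\SL_2$'' is correct, but the intermediate description should be fixed.

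Finally, note that the paper's proof is much shorter than your outline: it simply cites the relevant density theorems (Jacquet and Aizenbud--Gourevitch for the Kuznetsov case, \cite{SaBE1} for the torus case, Harish-Chandra for the group case) rather than sketching a general stratification argument. Your steps (3)--(5) are not wrong, but they are more than is needed here.
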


\begin{proof}
 For the Kuznetsov case, this is proven in more generality by Jacquet, Aizenbud and Gourevitch in \cite[Theorem 1.1]{Jacquet-density}, \cite[Corollary 6.0.4]{AGsmoothtransfer}. 
 
 The quotient stack $[\Gm\backslash \PGL_2/\Gm]$ looks locally like $[\mathbbm A^2/\Gm]$ (with action $(x,y)\cdot a = (ax, a^{-1}y)$ \cite[Lemma 3.2]{SaBE1}. The density of regular orbital integrals is then \cite[Lemma 2.3]{SaBE1}, which we will revisit in Lemma \ref{coinvariantA2} below to fill in some missing details in the proof.
 
 The theorem on the group is a well-known theorem of Harish-Chandra. Notice that $[\SL_2 \times \SL_2 /\SO_4^\diag] \simeq [\frac{\SL_2}{\PGL_2}]$, so we can invoke the density of regular semisimple orbital integrals on the open subset $\SL_2(F)/\{\pm 1\}$ of $\PGL_2(F)$.
\end{proof}

The way to verify that diagram \eqref{Bcommute} commutes is to examine pullbacks of Mellin transforms. Theorem \eqref{thmpullbackfrombd} enables us to do that. For example, if $A_X=A_Y$, $W_X=W_Y$ and the map $\mathcal T$ is designed to respect Plancherel measures, that is (for the most continuous spectrum), to pull back the relative character $I_\chi^{Y,\cl}$ for $Y$ to the relative character $I_\chi^{X,\cl}$ for $X$ (see Theorem \ref{Plancherel}), then 
\begin{quote}
\emph{the pullback of Mellin transform $M_\chi^{Y_\emptyset}$ on $Y_\emptyset$ via $\mathcal T_\emptyset$ should be}
\begin{equation}\mathcal T_\emptyset^* M_\chi^{Y_\emptyset} = \frac{\mu_X(\chi)}{\mu_Y(\chi)}\cdot M_\chi^{X_\emptyset}.
\end{equation}
\end{quote}

By the explicit form of the scalars $\mu_X(\chi)$, this corresonds to a composition of the \emph{multiplicative Fourier convolutions} that we defined in \S \ref{sssFourierconv} --- compare with \eqref{FE}. This is the conceptual explanation that I can presently give for all the transfer operators $\mathcal T$ and Hankel transforms that will appear in the rest of this paper: their geometric expressions are equal or, at least, \emph{deformations} of the geometric expressions for the transfer operators $\mathcal T_\emptyset$ of the boundary degenerations. These are given by Fourier convolutions determined by the scattering operators which, in turn, are closely related to the $L$-functions of the associated global period integrals, by \cite[\S 17]{SV}. Hence, the examples discussed in this paper suggest that \emph{the $L$-functions attached to spherical varieties inform the way that their relative trace formulas will be geometrically compared}.

\section{Transfer between the Kuznetsov formula and the stable trace formula for $\SL_2$} \label{sec:Rudnick}

\subsection{Relative characters}\label{ssrelchars-Kuz}

Here we discuss the local comparison behind Rudnick's thesis \cite{Rudnick}. Let $G=\SL_2$. Both $G$ as a $G\times G$-variety and the Whittaker space $(N,\psi)\backslash G$ have the same dual group, namely, $\PGL_2$. We will construct a local transfer map 
$$ \mathcal T: \mathcal S^-_{L(\Ad,1)}((N,\psi)\backslash G/(N,\psi)) \xrightarrow\sim \mathcal S(\frac{G}{G}),$$
which gives rise to stable functoriality between the Kuznetsov and the Selberg trace formula. The non-standard space $\mathcal S^-_{L(\Ad,1)}((N,\psi)\backslash G/(N,\psi)) $ of orbital integrals was defined in \S \ref{ssnonstandard}. In terms of the representatives 
$$ \tilde\zeta =\left(\begin{array}{cc}
 & -\zeta^{-1} \\
\zeta &
\end{array}\right)$$
of regular orbits for the Kuznetsov formula, it consists of measures which, in a neighborhood of $\zeta=0$, coincide with the usual test measures $\mathcal S((N,\psi)\backslash G/(N,\psi))$ for the Kuznetsov formula, while in a neighborhood of $\zeta =\infty$ they are of the form 
$$ C(\zeta^{-1}) d^\times \zeta,$$ 
where $C$ is a smooth function in a neighborhood of zero.

We can think of $N\backslash G$ as $V^*$, the complement of zero in a two-dimensional vector space, and the identification $N\simeq \Ga$ as a Whittaker structure. Then $N\backslash G\sslash N\simeq (V^*\times V^*)\sslash G$ was canonically identified in \S \ref{ssinvariant} with $\Ga$ through the symplectic pairing. This identification is compatible with the section $\zeta\mapsto \tilde\zeta$ over $F^\times$.

Let $\pi$ denote an irreducible tempered representation of $\SL_2$, and $\Pi$ its $L$-packet (the restriction of an irreducible tempered representation of $\GL_2$). We assume that $\pi$ is the unique generic element of $\Pi$ with respect to the character $\psi$ of $N$. 
Define a morphism
$$ \tilde\pi\hat\otimes \pi\to C^\infty((N,\psi^{-1})\backslash G \times (N,\psi)\backslash G)$$
so that evaluation at the coset represented by $(1,1)$ is given by
\begin{equation}\label{IIWhittaker} \tilde v \otimes v\mapsto \int^*_{N} \left<\pi(n) v, \tilde v\right> \psi(n) dn,\end{equation}
with the measure on $N$ induced by its identification with $\Ga$.
This regularized integral is understood as the value at $\lambda =1$ of the Fourier transform (defined as $\int \Phi(x)\psi(\lambda x) dx$) of the $L^2$-function $n\mapsto \left<\pi(n) v, \tilde v\right>$ (where $N$ is identified again with $\Ga$). I point the reader to \cite[\S 2]{LM} and \cite[\S 6.3]{SV} for details.  

The space $C_{\rm mod}^\infty((N,\psi^{-1})\backslash G \times (N,\psi)\backslash G)$ of smooth Whittaker functions of moderate growth is in canonical duality with $\mathcal S(((N,\psi)\backslash G \times (N,\psi^{-1})\backslash G)$. The restriction ``moderate growth'' only applies to the Archimedean case, and the image of the morphism defined by \eqref{IIWhittaker} automatically lands in it; however, from now on, for notational simplicity, we will be abusing notation and writing $C^\infty$ for $C^\infty_{\rm mod}$.  

The adjoint of the map above is a morphism 
$$\mathcal S((N,\psi)\backslash G \times (N,\psi^{-1})\backslash G) \to \pi\hat\otimes \tilde\pi.$$
Its composition with the canonical pairing $\pi\hat\otimes\tilde\pi \to \CC$ is a $G^\diag$-invariant functional on $\mathcal S((N,\psi)\backslash G \times (N,\psi^{-1})\backslash G)$, which factors through the coinvariant space 
$\mathcal S(N,\psi\backslash G/ N,\psi)$ (see \eqref{Whittakercoinvariants}), will be denoted by $J_\pi$ or $J_\Pi$:
$$J_\Pi: \mathcal S(N,\psi\backslash G/ N,\psi) \to \CC.$$
This is the \emph{relative character} (or Bessel distribution) attached to the packet $\Pi$. 

Explicitly,
$$ J_\Pi(\Phi_1\otimes\Phi_2) = \sum_{(v,\tilde v)} \int_{(N\backslash G)^2} \Phi_1(x_1)\Phi_2(x_2) \int^*_{N} \left<\pi(nx_1) v, \tilde\pi(x_2)\tilde v\right> \psi(n) dn,$$
where $(v,\tilde v)$ runs over dual pairs in dual bases of $\pi$ and $\tilde\pi$. Notice that it does not make a difference whether we sum over a dual basis for $\pi$ or for the entire $L$-packet $\Pi$; since the other elements of the packet are not generic, their contribution will be automatically zero.

\begin{example}
Let $F$ be non-Archimedean, with ring of integers $\mathfrak o$, residual degree $q$, and a uniformizer $\varpi$.

 Suppose that $\pi = I(\chi)$ is a $K=G(\mathfrak o)$-unramified principal series representation, unitarily induced from an unramified character $\chi$ of the upper triangular Borel subgroup (identified with a character of $F^\times$).  Let $\phi_{K,\chi}\in \pi$, $\phi_{K,\chi^{-1}}\in\tilde \pi$ be $K$-invariant vectors satisfying $\left<\phi_{K,\chi},\phi_{K,\chi^{-1}}\right>=1$. 
 It takes an elementary calculation (or an application of Macdonald's formula on zonal spherical functions) to show that the unramified matrix coefficient
 $$\Phi(y) = \left< \pi \begin{pmatrix} 1 & y \\ & 1 \end{pmatrix} \phi_{K,\chi}, \phi_{K,\chi^{-1}}\right>$$
 depends only on the absolute value of $y$, and satisfies $\Phi(y)=1$ on $\mathfrak o$ and $\Phi(y) = \frac{q^{-1}-q^{-2}+q^{-1}\chi(\varpi) + q^{-1}\chi(\varpi)^{-1}}{1+q^{-1}}$ when $y\in \varphi^{-1}\mathfrak o^\times$.
 
 Thus, 
 if $f\in \mathcal S(N,\psi\backslash G/N,\psi)$ is the image of the identity element of the Hecke algebra, then
 $$ \left< f, J_\pi\right>  = \int \Phi(y) \psi(y) dy = $$
 \begin{equation}\label{Bessel-basic}1-\Phi(\varpi^{-1}) = \frac{(1-q^{-1}\chi(\varpi))(1-q^{-1}\chi(\varpi)^{-1})}{1+q^{-1}} = \frac{\zeta(2)}{L(\pi,\Ad,1)}.
 \end{equation}

\end{example}

On the other hand, consider the space of test measures $\mathcal S(\frac{G}{G})$ for the stable trace formula of $G$. The \emph{stable character} $\Theta_\Pi$ of the $L$-packet $\Pi$ is a functional on this space, descending from the sum of the characters of the elements of $\Pi$, which are generalized functions on the group (that is, functionals on $\mathcal S(G)$).

It was proven in \cite[Theorem 6.3.4]{SV} that the characters $J_\Pi$ and $\Theta_\Pi$ satisfy the Plancherel formula for their respective spaces, for the \emph{same} Plancherel measure. More precisely, let $\widehat{G}_\st^\temp$ denote the ``stable tempered dual'' of $G$, i.e., the set of tempered representations modulo the equivalence of belonging to the same $L$-packet (or, equivalently, to the restriction of the same tempered representation of $\GL_2$). Then, fixing a measures $dg$ on $G$ to define $L^2(G)$ as a space of measures, there is a unique measure $\mu_G(\Pi)$ on $\widehat{G}_\st^\temp$ such that, for any $\varphi_1, \varphi_2\in \mathcal S(G)$ we have:
\begin{equation}\label{Plancherel-group}
 \int_G \frac{\varphi_1\cdot \varphi_2}{dg} = \int_{\widehat{G}_\st^\temp} \Theta_\Pi(\varphi_1\otimes \varphi_2) \mu_G(\Pi).
\end{equation}

Let $dx$ be the measure on $N\backslash G$ which factorizes the measure $dg$ with respect to the fixed Haar measure on $N\simeq\Ga$. Then, Theorem 6.3.4 of \cite{SV} states that 
for any $\varphi_1, \varphi_2\in \mathcal S(N,\psi\backslash G)$ we have:
\begin{equation}\label{Plancherel-Whittaker}
 \int_G \frac{\varphi_1\cdot \varphi_2}{dg} = \int_{\widehat{G}_\st^\temp} J_\Pi(\varphi_1\otimes \varphi_2) \mu_G(\Pi)
\end{equation}
for \emph{the same} measure $\mu_G$.

\subsection{The main theorem}

Fix the isomorphism $\Dfrac{G}{G} \simeq \Ga$ via the trace map --- hence, both $\mathcal S(\frac{G}{G})$ and the non-standard test measures $\mathcal S^-_{L(\Ad,1)}((N,\psi)\backslash G/(N,\psi))$ for the Kuznetsov formula are understood as measures on $\Ga$.  In this section we will prove part \eqref{two} of the following theorem, assuming the other statements, which will be proven in Section \ref{sec:sym2}.

\begin{theorem}\label{thmRudnick}
Consider the equivariant Fourier transform $\mathcal T:=\mathscr F_{\Id,1}$ of multiplicative convolution with the measure $D_1 = \psi(\zeta) d\zeta = \psi(\zeta) |\zeta|d^\times\zeta$ on $\Gm$.

Then:
\begin{enumerate}
 \item \label{one} The convolution makes sense on $\mathcal S(N,\psi\backslash G/N,\psi)$ as the Fourier transform of a distribution, and maps it into $\mathcal S(\frac{G}{G})$. 
 \item \label{two} For every tempered packet $\Pi$ and $J_\Pi$ as above, \begin{equation} \label{pullbackchar}
        \mathcal T^*\Theta_\Pi = J_\Pi.
       \end{equation} 
 \item \label{three} The transform extends to an isomorphism, given by the same convolution understood, again, as the Fourier transform of a(n $L^2$-)distribution:
 \begin{equation}\label{transferRudnickeq}
  \mathcal T: \mathcal S_{L(\Ad,1)}^-(N,\psi\backslash G/N,\psi) \xrightarrow\sim \mathcal S(\frac{G}{G}).
 \end{equation}
 \item \label{four} At non-Archimedean places, unramified over the base field $\mathbb Q_p$ or $\mathbb F_p((t))$, it satisfies the fundamental lemma for the Hecke algebra up to a factor of $\zeta(2)=(1-q^{-2})^{-1}$, namely: for all $h\in \mathcal H(G,K)\subset S(G)$, it takes the element
 $$ h\cdot f_{L(\Ad, 1)} \in \mathcal S_{L(\Ad,1)}^-(N,\psi\backslash G/N,\psi)$$ to the image of $\zeta(2) h$ in $\mathcal S(\frac{G}{G})$.
\end{enumerate}
\end{theorem}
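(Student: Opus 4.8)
The goal is the identity $\langle \mathcal T f,\Theta_\Pi\rangle=\langle f,J_\Pi\rangle$ for every $f\in\mathcal S(N,\psi\backslash G/N,\psi)$ and every tempered $L$-packet $\Pi$; by part \eqref{one} the left-hand side makes sense, since $\mathcal T f\in\mathcal S(\frac{G}{G})$ and $\Theta_\Pi$ is a continuous functional there. The plan is to unfold this to a one-variable identity. Writing $\Theta_\Pi$ through its Harish--Chandra character function $\theta_\Pi$ on the Chevalley line $\Dfrac{G}{G}\simeq\Ga$ (trace coordinate), and using that $\mathcal T=\mathscr F_{\Id,1}$ is a multiplicative Fourier convolution with respect to the $\Gm$-action on that coordinate, one has $\mathcal T^*\Theta_\Pi=\mathscr F_{\Id,1}^*\Theta_\Pi$, and the adjoint of $\mathscr F_{\Id,1}$ is again a multiplicative Fourier convolution (with $\psi^{-1}$ and the reciprocal twist, up to the $\Gm$-conventions of \S\ref{ssnonstandard}). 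Thus the theorem is equivalent to the assertion that $J_\Pi$, viewed as a distribution on the $\zeta$-line $\mathfrak{C}^0\simeq\Gm$, is the Fourier-convolution transform of the stable character $\Theta_\Pi$ on the trace line; equivalently, writing $j_\Pi$ for the locally integrable function representing $J_\Pi$ on $\Gm$ (cf.\ the defining integral \eqref{IIWhittaker}), one must show
\[
 j_\Pi(\zeta)=\int_{\Gm}\theta_\Pi(\zeta\cdot x)\,\psi(x)\,|x|\,d^\times x
\]
in the regularized sense.

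The first step is the reduction to this identity on the regular locus. Both $\mathcal T^*\Theta_\Pi$ and $J_\Pi$ are continuous, and on the open locus $\Gm$ they are represented by the functions above, so it suffices to prove the pointwise identity there, \emph{plus} a vanishing statement ruling out a correction distribution supported on the complement (the closed Bruhat cell $\zeta=0$ and the point at infinity). For the latter I would use parts \eqref{one} and \eqref{three}: since $\mathcal T$ is a topological isomorphism of the enlarged spaces restricting to a well-behaved map of the standard ones, the difference $\mathcal T^*\Theta_\Pi-J_\Pi$ is a continuous functional on $\mathcal S(N,\psi\backslash G/N,\psi)$ vanishing on all measures supported in $\Gm$; one then either computes the asymptotic germs of the two distributions at $\zeta=0$ and at $\infty$ and checks they agree, or---more conceptually---invokes the two Plancherel formulas \eqref{Plancherel-group}, \eqref{Plancherel-Whittaker}, which hold for the \emph{same} measure $\mu_G$ (\cite[Theorem 6.3.4]{SV}): integrating $\mathcal T^*\Theta_\Pi-J_\Pi$ against $\mu_G$ shows it annihilates a weak-$*$ dense family, and combined with the fact that both $\mathcal T^*\Theta_\Pi$ and $J_\Pi$ are eigendistributions for the common multiplier (Bernstein-center) action, this forces it to vanish.

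The technical heart is the pointwise identity $j_\Pi=\mathscr F_{\Id,1}^*\theta_\Pi$ on $\Gm$, which I would prove via Mellin transforms in the $\Gm$-variable. By \eqref{FE}, $\mathscr F_{\Id,1}$ acts on Mellin transforms by multiplication by a Tate gamma factor $\gamma(\chi,\Id,0,\psi)$, so the claim reduces to an identity between the Mellin transforms of $j_\Pi$ and of $\theta_\Pi$ differing precisely by this factor. For a principal-series packet $\Pi=I(\chi)$ ($\chi$ a unitary character of $F^\times\simeq A$), the Weyl integration formula makes $\theta_\Pi$ explicit---on the split torus as the ratio of $\chi+\chi^{-1}$ to the Weyl discriminant, and on each anisotropic torus by the analogous formula---so its Mellin transform is an explicit combination of Tate $L$- and $\gamma$-factors of $\chi$ and its shifts; and the Mellin transform of $j_\Pi$ is computed in the same terms from the classical description of the Bessel function of $\Pi$ (alternatively, one can bypass the explicit Bessel function by a Rankin--Selberg unfolding relating \eqref{IIWhittaker} directly to matrix coefficients). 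The required identity then follows formally from the Tate local functional equation \eqref{gammaZeta}, with the normalization constant coming out to exactly $1$---which is itself the shadow of the common Plancherel measure $\mu_G$. For discrete-series packets (and, in the Archimedean case, limits of discrete series) I would either run the analogous computation, in which only the anisotropic-torus contribution enters, or deduce the identity from the principal-series case by meromorphic continuation in $\chi$. I expect the main obstacle to be exactly this step: producing the Bessel function $j_\Pi$ of an $\SL_2$ $L$-packet in a form precise enough to match the $\psi$-average of the character, handling the anisotropic-torus contributions and the (Archimedean limit-of-)discrete-series packets uniformly, and pinning the constant to $1$; the secondary difficulty is the germ-versus-Plancherel argument excluding an anomalous distribution at $\zeta=0$.
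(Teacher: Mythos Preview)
Your approach is genuinely different from the paper's, and it has a real obstacle.

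The paper does not compute Mellin transforms of $\theta_\Pi$ and $j_\Pi$ and match gamma factors. Instead, it observes a geometric identity: the transfer operator $\mathcal T$ factors through the intermediate quotient $\frac{G}{N}$. Namely, if $f_1\in\mathcal S(\frac{G}{N})$ is the push-forward of a Schwartz measure on $G$ and $f$ its twisted push-forward to $\mathcal S(N,\psi\backslash G/N,\psi)$, then (formally)
\[
\mathcal T f \;=\; r_!\Bigl(\int_N R_x f_1\,\psi^{-1}(x)\,dx\Bigr),
\]
where $r:\frac{G}{N}\to\frac{G}{G}$ is the natural map and $R_x$ is $N$-translation on $\frac{G}{N}$. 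Pairing against $\Theta_\Pi$ and using conjugation-invariance of the character, one gets
\[
\langle\mathcal T f,\Theta_\Pi\rangle=\int_N^*\langle f_1,R_x\Theta_\Pi\rangle\,\psi(x)\,dx.
\]
But the right-hand side is $\langle f_1,J_\Pi\rangle$ essentially by the \emph{definition} of $J_\Pi$ as the regularized $(N,\psi)$-average of matrix coefficients (the formula \eqref{IIWhittaker}). No explicit character or Bessel formula is needed, no case analysis into principal vs.\ discrete series, and no boundary or germ argument. All of the technical work in the paper goes into making the interchange of the regularized $N$-integral with the pairing against $\Theta_\Pi$ rigorous, which is done by interpreting everything as Fourier transforms of distributions and using the continuity of $u\mapsto F_u\,d\zeta\in\mathcal S(\frac{G}{G})$ (this is where Statement~\eqref{one} is invoked).

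The gap in your plan is the Mellin step. You want the Mellin transform of $\theta_\Pi$ \emph{in the trace coordinate} $t$ to be an explicit combination of Tate factors in the inducing character $\chi$. But the trace coordinate and the Cartan coordinate are related by $t=a+a^{-1}$, which is not a multiplicative change of variables; moreover, the regular semisimple locus in $t$ is a disjoint union of pieces coming from the split torus and from each anisotropic torus, on each of which $\theta_\Pi$ has a different Weyl-type formula. The Mellin transform in $t$ mixes all of these contributions, and there is no reason for it to be a clean product of abelian gamma factors. (The scattering-theoretic identities in the paper that \emph{do} produce such gamma factors live on the asymptotic cone $Y_\emptyset$, where the coordinate is genuinely multiplicative; they are not identities on the Steinberg base itself.) Your boundary argument is also shaky: invoking the Plancherel formula and ``eigendistribution'' properties to kill a distribution supported at $\zeta=0$ is not a complete argument, and appealing to Statement~\eqref{three} is circular in the paper's logical order. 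The paper's route sidesteps both issues entirely.
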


\begin{remarks}
\begin{enumerate}
 \item  The part of Statement \eqref{four} about non-trivial elements of the Hecke algebra is quite redundant, since it can be deduced for the fundamental lemma for the identity element of the Hecke algebra, and Statement \eqref{two}. In any case, in Section \ref{sec:sym2} we will obtain it ``for free''. In generalizations of this paper, one would expect to prove an analog of Statement \eqref{four} locally, and then use it to deduce, by a global-to-local argument involving comparisons of relative trace formulas, an analog of Statement \eqref{two}.
 \item The factor $\zeta(2)$ in Statement \eqref{four} is compatible with the calculation of the relative character applied to the standard basic function of $\mathcal S(N,\psi\backslash G/N,\psi)$ in \eqref{Bessel-basic}. 
 \item Theorem 1.3 in \cite{SoundYoung} can be seen as a global application of the isomorphism \eqref{transferRudnickeq}, restricted to hyperbolic conjugacy classes.
 \item As Valentin Blomer pointed out to me, it seems that \eqref{pullbackchar} can be directly obtained, in the real case, from classical identities such as \cite[(6.611.1)]{Gradshteyn-Ryzhik}, expressing the Fourier transform of Bessel functions in terms of exponentials; I have not checked the details.
 \item In a different, but related, setting, David Ben-Zvi and Sam Gunningham have recently established what can be seen as a comparison between the quotient spaces associated to the Kuznetsov formula and the trace formula, in the setting of loop groups of arbitrary complex reductive groups. Their theorem \cite[Theorem 6.16]{BZvi-Gunningham} constructs, by spectral arguments, what can be informally be described as a map from ``$D$-modules on the quotient space $(N,\psi)\backslash G/(N,\psi)$'' to ``$D$-modules on the adjoint quotient of $G$''. It would be interesting to see an explicit geometric description of their comparison, similar to the Fourier transform of the above theorem.
\end{enumerate}

\end{remarks}

We will now prove Statement \eqref{two}, assuming the rest (more precisely assuming Statement \eqref{one}). Statements \eqref{one} and \eqref{three} will be proven at the end of \S \ref{Pfpart3}, and Statement \eqref{four} will be proven at the end of \S \ref{Pfpart4}.

\begin{proof}[Proof of Statement \eqref{two}, assuming the rest]
Let $f(\zeta) = \Phi(\zeta) d^\times\zeta$. Since the map $N\backslash G\twoheadrightarrow \C := N\backslash G\sslash N$ is smooth,  the untwisted push-forward map 
$$\mathcal S(N\backslash G)\to \Meas(\C)$$ 
has image in locally bounded measures (in fact, Schwartz measures); a fortiori, the same is true for the twisted push-forward. Hence, $\Phi(\zeta)\ll 1$. 

We write, formally,
$$ D_1\star f (\zeta) = d^\times \zeta\cdot  D_1 \star (|\bullet|\Phi(\bullet))(\zeta) = $$
$$= d^\times\zeta\cdot \int_{F^\times} |z^{-1}\zeta|\Phi(z^{-1} \zeta) \psi(z) |z| d^\times z = d \zeta\cdot  \int_F \Phi(z^{-1}) \psi(z\zeta) dz,$$
and hence interpret $\mathcal Tf$ as the product of the measure $d\zeta$ with the Fourier transform of the $L^2$-function $\zeta\mapsto \Phi(\zeta^{-1})$. 

The extension of this to an isomorphism 
  $$\mathcal T: \mathcal S_{L(\Ad,1)}^-(N,\psi\backslash G/N,\psi) \xrightarrow\sim \mathcal S(\frac{G}{G}),$$
  together with the fundamental lemma, will be postponed until \S \ref{sec:sym2}, where they will be obtained as special cases of a more general theorem.
  
We prove the statement on relative characters. It relies on the following formal relation:
\begin{equation}\label{transfer-trace}\mathcal T(f) = r_!(f \times dn),\end{equation}
where $r$ is the quotient map
$$r: \frac{G}{N} \to \frac{G}{G},$$ 
and $f\times dn$ denotes the ``pullback'' of $f$ to the $N$-torsor $\frac{G}{N} \to N\backslash G/N$ defined by the fixed Haar measure on $N$. \emph{Here we do not think of $f$ as a scalar-valued measure on $N\backslash G\sslash N$} (by the trivialization described in \S \ref{sstwistedpf}), \emph{but as an $(N,\psi)$-equivariant measure on $\Dfrac{G}{N}$, divided by the Haar measure of $N$}, where $n\in N$ acts by sending the class of $g\in G$ to the class of $ng$.  If we fix coordinates $\begin{pmatrix} a & b \\ c & d \end{pmatrix}\mapsto (\zeta = c, t=\tr)$ for $\Dfrac{G}{N}\simeq \mathbbm A^2$, the trivialization of $f$ introduced in \S \ref{sstwistedpf} is the one obtained by restricting it to the line $t=0$. If $f_1$ is the push-forward to $\Dfrac{G}{N}$ of a Schwartz measure on $G$, and $f$ its twisted push-forward to $\mathcal S(N,\psi\backslash G/N,\psi)$. 
and if $R_n$ denotes the translation action of $N$ on measures on $\Dfrac{G}{N}$, we have
\begin{equation}\label{ff1} f \times dn= \int R_n f_1 \cdot \psi^{-1}(n) dn.
\end{equation}
In the coordinates $(\zeta,t)$ above, the group $N=\Ga$ acts as $(\zeta,t)\cdot x = (\zeta, t+\zeta x)$.

We will need to give a rigorous meaning to the formal relation \eqref{transfer-trace}, because the push-forward $r_!$ does not converge absolutely. But, before we do that, let us justify this relation in a formal manner, pretending that all integrals were convergent.

First of all, let us show why \eqref{transfer-trace} should imply \eqref{pullbackchar}: With $f_1$ and $f$ as above we have, formally,
$$ \left< \mathcal T f ,\Theta_\Pi\right> \xlongequal{\eqref{transfer-trace}} \left<f\times dn, p^*\Theta_\Pi\right> = $$
$$=\left<\int_N R_x f_1 \cdot \psi^{-1} (x) dx \right> = \int_N \left< R_x f_1, \Theta_\Pi\right> \psi^{-1}(x) dx = $$
$$=\left<f_1, \int R_x \Theta_\Pi \psi(x) dx \right> = \left< f_1 , J_\Pi\right> = \left< f, J_\Pi\right>,$$
the last equality because the generalized function $J_\Pi$ is already $(N,\psi^{-1})$-equivariant on both sides.

To formally justify \eqref{transfer-trace}, we claim that, for a measure $f \in \mathcal S(N,\psi\backslash G/N,\psi)$ which, after trivialization, is written $f(\zeta) = \Phi(\zeta) d\zeta$, the pullback measure $f\times dn$ on $\frac{G}{N}$ is $\Phi(\zeta)|\zeta|^{-1} \psi(\frac{t}{\zeta}) d \zeta dt$. Indeed, this relies on the calculation that a matrix in $\SL_2$ with given $(\zeta = c,t)$ and $c\ne 0$ can be written as
$$ \begin{pmatrix} 1 & x \\ & 1 \end{pmatrix} \begin{pmatrix}  & -c^{-1} \\ c \end{pmatrix} \begin{pmatrix} 1 & y \\ & 1 \end{pmatrix}$$
with $x+y = \frac{t}{c}$. 

Thus, the push-forward $r_!(f \times dn)$ is 
\begin{equation}\label{rshriek} r_!(f \times dn)(t) = dt \cdot \left(\int_F \Phi(c)|c|^{-1} \psi(\frac{t}{c}) d c \right) = \int f(\frac{t}{z}) \psi(z) |z| d^\times z.
\end{equation}

Now let us rigorously prove \eqref{pullbackchar}: As before, consider a Schwartz measure on $G$, and let $f_1$ be its push-forward to $\Dfrac{G}{N}$, and $f$ its twisted push-forward to $\mathcal S(N,\psi\backslash G/N,\psi)$. The proof relies on the (rigorous) relation
\begin{equation}
 \int^* \left<r_! (R_x f_1),\Theta_\Pi\right> \psi^{-1}(x) dx = \left<\mathcal T f,\Theta_\Pi\right>,
\end{equation}
as measures on $\Dfrac{G}{G}$, where the left hand side should be interpreted, as we have done for the right hand side when defining $\mathcal Tf$, as the Fourier transform of a distribution; more precisely, as the value of the generalized function $u\mapsto \int_F \left<\pi_! (R_x f_1),\Theta_\Pi\right>  \psi(ux) dx $ at $1$, after we show that this generalized function is a continuous function in a neighborhood of $1$. 

Let us first show the corresponding identity for generalized functions, so choose a Schwartz measure $\varphi(u) du$ on $F$. We denote Fourier transforms by $\varphi\mapsto \hat\varphi$, whether they are defined with the character $\psi$ or $\psi^{-1}$, and leave it to the curious reader to figure out where each character is being used. We also write $\Theta$ for $\Theta_\Pi$. We compute:

$$ \int \varphi(u)  \int^* \left<r_! (R_x f_1),\Theta\right> \psi^{-1}(ux) dx du \xlongequal{\mbox{by def.}}  \int \hat\varphi(x)   \left<\pi_! (R_x f_1),\Theta\right> dx  = $$
$$ = \iiint \hat\varphi(x) f_1(\zeta,t+\zeta x) \Theta(t) dx,$$
where we have used the fact that the triple integral is absolutely convergent. Indeed,  
writing $f_1 = \Phi_1 d\zeta dt$, the product 
$$\hat\varphi(x) \Phi_1(\zeta,t+\zeta x) \Theta(t)$$ has local $L^1$-seminorms (i.e., $L^1$-seminorms over additive translates of any fixed compact subset in the variables $(x,\zeta,t)$) of rapid decay in the variable $\min(|x|, |\zeta|, |t|)$, because the function $(\zeta,t)\mapsto \Theta(t)$ has local $L^1$-seminorms of polynomial growth, $\hat\varphi$ is of rapid decay, and $\Phi_1(\zeta,t)$ has rapidly decaying local-$L^1$ seminorms in both variables.

We can now write it as an iterated integral, with the variable $x$ to the interior. Consider the function $x\mapsto \Phi_1(\zeta, t+\zeta x)$. For fixed $\zeta\ne 0$ and $t$, the function is rapidly decaying in $x$. Its Fourier transform against the character $\psi^{-1}$, evaluated at a point $u$, is ${^u\Phi}(\zeta) \psi(u\frac{t}{\zeta})$, where ${^u\Phi}$ is the Fourier transform of $x\mapsto \Phi_1(\zeta, \zeta x)$ (so that, by \eqref{ff1}, $f = {^1\Phi} d \zeta$).  Thus, the last integral can be written: 
$$ \int_t \Theta(t) \int_\zeta \int_u \varphi(u) {^u\Phi}(\zeta) \psi(u\frac{t}{c}) du d\zeta dt.$$

We now want to switch the order of integration over $\zeta$ and $u$, interpreting the integral over $\zeta$ as a Fourier transform in the sense of distributions: 
\begin{equation}\label{toprove}\int_\zeta \int_u \varphi(u) {^u\Phi}(\zeta) \psi(u\frac{t}{\zeta}) du d \zeta = \int_u \varphi(u) \int_\zeta^* {^u\Phi}(\zeta) \psi(u\frac{t}{\zeta}) du d \zeta.\end{equation}
To show this, let $\mapsto F_u$ be the Fourier transform of the function $\zeta\mapsto {^u\Phi}(\frac{1}{\zeta})$ \emph{against the character $x\mapsto \psi(ux)$}. Notice that for $u=1$ the measure $F_1(\zeta) d\zeta$ is precisely the image of ${^1\Phi} d\zeta$ under the transfer operator $\mathcal T$, and hence belongs to $\mathcal S(\frac{G}{G})$. As will be clear from the proof of Statement \eqref{one} (in \S \ref{proofRudnick}), there is nothing special about $u=1$; more precisely, for $u$ in a neighborhood of $1$, the map $u\mapsto F_u d\zeta$ is a continuous section of $\mathcal S(\frac{G}{G})$. 

The inverse map $F_u \mapsto {^u\Phi} $ is given by Fourier transform using the character $\psi^{-1}$, instead of $\psi$, which converges absolutely. Indeed, it is straightforward to see that measures in $\mathcal S(\frac{G}{G})$ are bounded by Schwartz measures, and hence $F_u$ is bounded (and of rapid decay). As $u$ varies in a neighborhood of $1$, the Fourier transforms of the functions $F_u$ converge uniformly, since $u\mapsto F_ud\zeta \in \mathcal S(\frac{G}{G})$ is continuous.

Thus, if $\kappa$ is another Schwartz function, and we compute both sides of the desired equality \eqref{toprove} as distributions in the variable $t$, we have
$$  \int_t \hat \kappa(t) \int_u \varphi(u) \int_c^*  {^u\Phi}(c) \psi(u\frac{t}{c}) du d^\times c = \int_t \hat\kappa(t) \int_u \varphi(u) F_u(t) dt =$$
$$ = \int_u \varphi(u) \int_t \hat\kappa(t) F_u (t) dt du = \int_u \varphi(u) \int_z \kappa(z) \widehat{^u F}(z) dz du,$$ 
where $\widehat{F_u}$ is defined using the character $\psi^{-1}$, so the last expression is
$$ \int_u \varphi(u) \int_z \kappa(z) \int_t F_u(t) \psi^{-1}(tz) dt dz du$$

But this converges absolutely as a triple integral, so we can write it as
$$ \int_z \kappa(z) \int_u \varphi(u) \int_t  F_u(t) \psi^{-1}(tz) dt du dz = $$
$$ =\int_z \kappa(z) \int_u \varphi(u) \int_t  F_u(t) \psi^{-1}(u t\frac{z}{u}) dt du dz = $$
$$ = \int_z \kappa(z) \int_u \varphi(u) {^u\Phi}(\frac{u}{z}) du dz.$$
Applying Fourier transform in the variable $z$ again, this is equal to 
 
$$  \int_t \hat\kappa(t) \int_c \int_u \varphi(u) {^u\Phi}(\frac{u}{\zeta}) \psi(\zeta t)   du d\zeta dt$$
$$ = \int_t \hat\kappa(t) \int_\zeta \int_u \varphi(u) {^u\Phi}(\zeta) \psi(\frac{ut}{\zeta}) du d\zeta   dt,$$
 completing the proof of \eqref{toprove}.

Putting all together, we have shown that
$$ \int \varphi(u)  \int^* \left<r_! (R_x f_1),\Theta\right> \psi^{-1}(ux) dx du  = \int_t \Theta(t) \int_u \varphi(u) F_u(t) du dt =$$ 
$$ = \int_u \varphi(u)  \int \Theta(t) F_u(t) dt du,$$
the last step because of the continuity of the section $u\mapsto F_u\in \mathcal S(\frac{G}{G})$, and the rapid decay of elements of $\mathcal S(\frac{G}{G})$. Now, the inner integral on the right hand side, as a generalized function of $u$, is represented by a continuous function, hence so is the inner integral on the left hand side, and the equality holds for $u=1$.

\end{proof}

\subsection{Comparison with the degeneration}

In this subsection, $F$ is a non-Archimedean field.

Let $X$ be a symbol for the Whittaker model $(N,\psi)\backslash G$, and let $X_\emptyset = N\backslash G$ be its asymptotic cone. Set $Y=G=\SL_2$, $G'=G\times G/\{\pm 1\}^\diag$, and $Y_\emptyset$ the asymptotic cone of $Y$. We recalled in \S \ref{ssrelchars} that there is a canonical ``asymptotics'' morphism
$$ e_\emptyset^*\otimes e_\emptyset^*: \mathcal S(X\times X) \to \mathcal S^+(X_\emptyset\times X_\emptyset),$$
and saw that it descends to spaces of push-forward measures.
Together with the transfer operator $\mathcal T$, they give rise to most of the diagram \eqref{Bcommute}, which we repeat here:

$$ \xymatrix{
\mathcal S(X\times X/G) \ar[rr]^{e_\emptyset^*\otimes e_\emptyset^*}\ar[d]^{\mathcal T} && \mathcal S^+(X_\emptyset\times X_\emptyset/G) \ar[d]^{\mathcal T_\emptyset} \\
\mathcal S(Y\times Y/G') \ar[rr]^{e_\emptyset^*\otimes e_\emptyset^*} && \mathcal S^+(Y_\emptyset\times Y_\emptyset/G') }.
$$

What is missing is the transfer operator $\mathcal T_\emptyset$ making the diagram commute. 

\begin{theorem}\label{groupdegen}
 Identify $\C_\emptyset:= Y_\emptyset\times Y_\emptyset\sslash G' = X_\emptyset\times X_\emptyset\sslash G = \Ga$ as in \S \ref{ssrelchars}, namely, sending the distinguished $G'$-orbit, resp.\ $G$-orbit, to $1$ (and the singular one to $0$). There is a unique $A_X$-equivariant operator $\mathcal T_\emptyset$ making the above diagram commute, given by the multiplicative Fourier convolution $\mathscr F_{\check\alpha, 1}$ --- again, understood as the Fourier transform of a distribution.
\end{theorem}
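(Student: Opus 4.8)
The plan is to verify commutativity of the diagram by passing to Mellin transforms on the quotients $\C_\emptyset = \Ga$, where all the maps in sight become multiplication operators by explicit (abelian) gamma and $L$-factors, and then to invoke the uniqueness statement of Theorem~\ref{thmpullbackfrombd} and the relative-character formalism of \S\ref{ssrelchars}. Concretely, for both $X = (N,\psi)\backslash G$ and $Y = \SL_2$ (under $G' = \SO_4$), the quotient $\C_\emptyset$ carries an $A_X = A_Y$-action (these Cartans are canonically identified, and both little Weyl groups are $\Z/2$), and by Theorem~\ref{thmpullbackfrombd} the ``open'' Mellin transforms $M_\chi$ are related to the ``closed'' ones $M_\chi^{\cl}$ — which decompose the most continuous spectrum with Plancherel measure — by the explicit scalars $\mu_X(\chi)$ and $\mu_Y(\chi)$. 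Since the transfer operator $\mathcal T$ is designed precisely so that $\mathcal T^*\Theta_\Pi = J_\Pi$ (Theorem~\ref{thmRudnick}\eqref{two}), it pulls back the closed relative characters for $Y$ to those for $X$ (using the compatibility of $e_\emptyset^*\otimes e_\emptyset^*$ with asymptotics from \S\ref{ssrelchars}); therefore, the operator $\mathcal T_\emptyset$ making the diagram commute must satisfy, on Mellin transforms,
\begin{equation}\label{Tzeromellin}
\mathcal T_\emptyset^* M_\chi^{Y_\emptyset} = \frac{\mu_X(\chi)}{\mu_Y(\chi)}\cdot M_\chi^{X_\emptyset},
\end{equation}
which determines $\mathcal T_\emptyset$ uniquely among $A_X$-equivariant operators (the map to push-forward measures being Mellin-invertible by the proposition preceding Theorem~\ref{Plancherel}).

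The first step is thus to compute the ratio $\mu_X(\chi)/\mu_Y(\chi)$ from the explicit formulas in Theorem~\ref{thmpullbackfrombd}: for the Whittaker case $\mu_X(\chi) = \gamma(\chi,\check\alpha,0,\psi)$, and for the group case $\mu_Y(\chi) = \gamma(\chi,-\check\alpha,0,\psi^{-1})\gamma(\chi,\check\alpha,0,\psi)$, so
\[
\frac{\mu_X(\chi)}{\mu_Y(\chi)} = \gamma(\chi,-\check\alpha,0,\psi^{-1})^{-1} = \gamma(\chi,\check\alpha,1,\psi),
\]
using the relation $\gamma(\chi,\check\lambda,s,\psi)\gamma(\chi^{-1},-\check\lambda,1-s,\psi^{-1}) = 1$ that follows from \eqref{epsilon}. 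By the identity \eqref{FE}, multiplication of Mellin transforms by $\gamma(\chi,\check\alpha,1,\psi)$ is exactly the spectral effect of the multiplicative Fourier convolution $\mathscr F_{\check\alpha,1}$ (with $s=1$, so $1-s = 0$). The second step is to check that this convolution is well-defined on $\mathcal S^+(X_\emptyset\times X_\emptyset/G)$ and lands in $\mathcal S^+(Y_\emptyset\times Y_\emptyset/G') = \mathcal S(Y_\emptyset\times Y_\emptyset/G')$: here one writes $\mathscr F_{\check\alpha,1}$ as the Fourier transform of a distribution, exactly as in the proof of Theorem~\ref{thmRudnick}\eqref{two}, and uses that elements of $\mathcal S^+(X_\emptyset\times X_\emptyset/G)$ push forward to measures on $\Ga$ of the controlled shape (coinciding with Schwartz near $0$ and with Radon-transform-type asymptotics near $\infty$) analyzed via the scattering formula \eqref{scattering-Whittaker} and Corollary~\ref{corWhittakerasymptotics}. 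The third step is to confirm the compatibility at the level of the asymptotics maps: one must verify that $e_\emptyset^*\otimes e_\emptyset^*$ intertwines $\mathcal T$ with $\mathscr F_{\check\alpha,1}$, which reduces — since both sides are determined by their pullbacks of Mellin transforms and both $\mathcal T$ and $e_\emptyset^*$ respect the relevant $L^2$-Plancherel decompositions — to the scalar identity \eqref{Tzeromellin} just established, together with Theorem~\ref{Plancherel} and the fact (Theorem~\ref{thmpullbackfrombd}) that $e_\emptyset^*$ carries $I_\chi^{\cl}$-type characters to the canonical ones.

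The main obstacle, I expect, is the analytic bookkeeping in the second and third steps: the convolution $\mathscr F_{\check\alpha,1}$ does not converge absolutely, so one must make sense of it as a Fourier transform of a tempered distribution and show that this distribution is supported and behaves well enough that the push-forward to $\C_\emptyset = \Ga$ is genuinely a Schwartz measure on the target side. This is the same difficulty encountered in proving Theorem~\ref{thmRudnick}\eqref{one} and \eqref{two}, and I would handle it the same way — decompose a general element of $\mathcal S^+(X_\emptyset)$ as $\varphi_1 \oplus \mathfrak S_w\varphi_2$ with $\varphi_i \in \mathcal S(X_\emptyset)$, use the scattering formula \eqref{scattering-Whittaker} to pin down the asymptotic expansion of the push-forward near $\zeta = \infty$, and verify term by term that the regularized Fourier convolution converts this expansion into rapid decay (equivalently, smoothness at the origin in the trace coordinate), i.e. into an element of $\mathcal S(Y_\emptyset\times Y_\emptyset/G')$. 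The equivariance of $\mathcal T_\emptyset = \mathscr F_{\check\alpha,1}$ under $A_X$ is immediate from the definition of the convolution, and uniqueness then follows from Mellin inversion, completing the argument.
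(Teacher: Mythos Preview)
Your approach is essentially the paper's: determine $\mathcal T_\emptyset$ by its effect on Mellin transforms via the ratio $\mu_X(\chi)/\mu_Y(\chi)$, identify this with $\mathscr F_{\check\alpha,1}$ through \eqref{FE}, and justify the distributional Fourier convolution on $\mathcal S^+(X_\emptyset\times X_\emptyset/G)$ using Corollary~\ref{corWhittakerasymptotics}. One bookkeeping slip to fix: your equation \eqref{Tzeromellin} concerns the functionals $M_\chi(f) = \check f(\chi\delta^{\frac{1}{2}})$, not the raw Mellin transform $\check f(\chi)$, so the scalar $\gamma(\chi,\check\alpha,1,\psi)$ you compute acts on $M_\chi$, whereas \eqref{FE} (with $s=1$, so $1-s=0$) gives $\gamma(\chi,\check\alpha,0,\psi)$ as the action on $\check f$ --- these agree after the $\delta^{\frac{1}{2}}$-shift, which the paper makes explicit but you have conflated. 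Also, $\mathcal S^+(Y_\emptyset\times Y_\emptyset/G')$ is not equal to $\mathcal S(Y_\emptyset\times Y_\emptyset/G')$; the paper shows the image lands in the former by matching Mellin transforms with $e_\emptyset^*\otimes e_\emptyset^*(\mathcal T\tilde f)$, which is the cleanest way to close the diagram.
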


\begin{remark}
The reader will notice that the only property of the transfer operator $\mathcal T$ used in the proof is that it satisfies \eqref{pullbackchar}, in other words, that it relates the relative characters that correspond to \emph{the same} Plancherel measure, cf.\ \eqref{Plancherel-group} and \eqref{Plancherel-Whittaker}. At no point will we use the explicit expression for the transfer operator as $\mathscr F_{\Id,1}$. Thus, this theorem, with a simple comparison of coordinates that follows, gives a \emph{conceptual reason} why the transfer operator $\mathcal T$ is given by this formula: it is ``the same'' as the operator $\mathcal T_\emptyset$! In higher rank, in some examples computed together with Chen Wan, things are similar, but not so simple: the operator $\mathcal T$ tends to be a \emph{deformation} of the boundary operator $\mathcal T_\emptyset$; we currently do not understand the nature of this deformation.
\end{remark}

\begin{proof}
 Notice that for both $X$ and $Y$ the character $\delta$ on $A_X=A_Y$ coincides --- in the coordinate $\zeta$ as above, $\delta^\frac{1}{2} = |\zeta|$. 

 For what follows, denote by $J_\Pi$, $\Theta_\Pi$  by $J_\chi$, $\Theta_\chi$, respectively, when $\Pi$ is the principal series representation of $\SL_2$ obtained by unitary induction from the character $\chi$ of $A_X$. 
 Let $M_\chi$ denote the Mellin transform $f\mapsto \check f(\chi\delta^\frac{1}{2}) = \check f(\chi|\bullet|)$ on either of the spaces $\mathcal S^+(X_\emptyset\times X_\emptyset/G)$ and $\mathcal S^+(Y_\emptyset\times Y_\emptyset/G')$. 
 Let $I_\chi^X$, resp.\ $I_\chi^Y$ be its pullback via the asymptotics map to $\mathcal S(X\times X/G)$, resp.\ $\mathcal S(Y\times Y/G')$. 

 Comparing the Plancherel formulas \eqref{Plancherel-group}, \eqref{Plancherel-Whittaker} and Theorem \ref{thmpullbackfrombd},  
 we deduce that
 $$ I_\chi^Y \mu_Y(\chi) = \Theta_\chi \mu_G(\chi)$$
 and 
 $$ I_\chi^X \mu_X(\chi) = J_\chi \mu_G(\chi),$$
 where $\mu_G(\chi) d\chi$ is the Plancherel measure for $\SL_2$. (It can be shown to be equal to $\mu_Y(\chi) d\chi$ for a correct choice of Haar measures, hence $I_\chi^Y = \Theta_\chi$, but we don't need this here.)
 
 Since $\mathcal T^* \Theta_\chi = J_\chi$, for the diagram to commute we would need 
 $$ \mathcal T^*_\emptyset M_\chi  =\frac{\mu_X(\chi)}{\mu_Y(\chi)} M_\chi =  \gamma(\chi,-\check\alpha, 0,\psi^{-1})^{-1} M_\chi,$$
 or, in other words, 
 $$ \widecheck{\mathcal T_\emptyset f}(\chi) = \gamma(\chi,-\check\alpha, 1,\psi^{-1})^{-1} \check f(\chi) = \gamma(\chi, \check\alpha, 0,\psi) \check f(\chi).$$

 Now consider the multiplicative Fourier convolution $\mathscr F_{\check\alpha, 1}$ of an element $f\in \mathcal S^+(X_\emptyset\times X_\emptyset/G)$. As in the beginning of the proof of Theorem \ref{thmRudnick}, if $f(\zeta) = \Phi(\zeta) d^\times\zeta$ then $\mathscr F_{\check\alpha, 1}$ will be understood as the Fourier transform of the distribution $D(\zeta):= \Phi(\zeta^{-1}) d\zeta = |\zeta| f(\zeta^{-1})$. Notice that $\Phi$ is smooth away from $0$, and of compact support on $F$, therefore this distribution is smooth; its Fourier transform is a compactly supported distribution on $F$. We will now argue that $\mathscr F_{\check\alpha, 1} f$ can be reconstructed from its Mellin transform, precisely as in \eqref{invMellin}; moreover, that \eqref{FE} holds for its Mellin transform:
 $$ \widecheck{\mathscr F_{\alpha, 1}f}(\chi) = \gamma(\chi, \check\alpha, 0,\psi) \check f(\chi).$$
 This will imply, by \eqref{invMellin}, that $\mathcal T_\emptyset = \mathscr F_{\check\alpha,1}$.
 
 To see this, we apply Corollary \ref{corWhittakerasymptotics}: Suppose that $f$ is the push-forward of an element $\varphi \in \mathcal S^+(X_\emptyset\times X_\emptyset)$. Acting on $\varphi$, in both variables,  by the element $h\in \widehat{\mathcal S(A_X)}$ (completed Hecke algebra) whose Mellin transform is $\check h(\chi) = L(\chi,\check\alpha, 1)^{-1}$, we obtain an element $\varphi'\in \mathcal S(X_\emptyset\times X_\emptyset)$, whose push-forward we denote by $f'$. Taking into account the normalization of the action of $A_X$ on measures on $X_\emptyset$, which we did not adopt on the quotient space $\C_\emptyset$, 
 $$ f' = (|\bullet|^{-1} h) \cdot (|\bullet|^{-1} h) \cdot f,$$
 where now $A_X$ is identified with $\Gm$ through the positive root character.
 The morphism $X_\emptyset\times X_\emptyset \to \C_\emptyset$ is smooth, hence $f'$ is a Schwartz measure on the line. Acting on it once more by the measure $h$:
 $$ f'' := h\cdot (|\bullet|^{-1} h) \cdot (|\bullet|^{-1} h) \cdot f,$$
 it becomes supported away from zero. 
 
 Hence, the distribution 
 $$ D'' := (|\bullet| h^\vee)\cdot h^\vee \cdot h^\vee \cdot D,$$
 where $h^\vee (a) = h(a^{-1})$, is (smooth and) of compact support on $F^\times$. In particular, the theory of Tate zeta integrals applies to it:  $\mathscr F_{\check\alpha, 1} f''=$ the Fourier transform of $D''$ is a smooth, compactly supported measure on the affine line, both $D''$ and $\mathscr F_{\check\alpha, 1} f''$ can be reconstructed from their Mellin transforms, as in \eqref{invMellin}, and their Mellin transforms satisfy the functional equation \eqref{gammaZeta}, which can be written as in \eqref{FE}:
 $$ \widecheck{\mathscr F_{\check\alpha, 1}f''}(\chi) = \gamma(\chi, \check\alpha, 0,\psi) \check f''(\chi).$$
 
 But, by construction, 
 $$\check f''(\chi) =   L(\chi,-\check\alpha, 1)^{-2}  L(\chi,-\check\alpha, 2)^{-1} \cdot \check f(\chi),$$
 and, by the equivariance of Fourier convolution,
 \begin{equation}\label{equiv} \mathscr F_{\check\alpha, 1}f'' = h\cdot (|\bullet|^{-1} h) \cdot (|\bullet|^{-1} h) \mathscr F_{\check\alpha, 1}f.
 \end{equation}
It is now easy to see that, since  the factor $L(\chi,-\check\alpha, 1)^2  L(\chi,-\check\alpha, 2)$ has no pole at $\chi=1$, the inverse Mellin transform \eqref{invMellin}, applied to 
 $$\gamma(\chi, \check\alpha, 0,\psi) L(\chi,-\check\alpha, 1)^2  L(\chi,-\check\alpha, 2) \check f''(\chi) = \gamma(\chi, \check\alpha, 0,\psi) \check f(\chi),$$ 
 represents the unique compactly supported distribution $\mathscr F_{\check\alpha, 1}f$ on $F$ (in fact, in this case, a measure represented by an $L^1$-function) which satisfies \eqref{equiv}.

 This implies the claim. 

 Finally, we argue that $\mathscr F_{\check\alpha, 1} f \in \mathcal S^+(Y_\emptyset\times Y_\emptyset)$. By the Mellin inversion formula \eqref{invMellin}, again, it suffices to show that its Mellin transform is contained in the space of Mellin transforms of elements of $\mathcal S^+(Y_\emptyset\times Y_\emptyset)$. If $f$ is obtained as the asymptotics of some $\tilde f \in \mathcal S(X\times X/G)$, we have already seen that 
 $$M_\chi \mathscr F_{\check\alpha, 1} f = \gamma(\chi,-\check\alpha, 0,\psi^{-1})^{-1} M_\chi f = \gamma(\chi,-\check\alpha, 0,\psi^{-1})^{-1} I^X_\chi \tilde f = J_\chi \tilde f,$$
 but also
 $$ J_\chi\tilde f = \Theta_\chi (\mathcal T \tilde f) = M_\chi (e_\emptyset\otimes e_\emptyset(\mathcal T\tilde f)).$$
 
 Therefore, 
 $\mathscr F_{\check\alpha, 1} f = e_\emptyset^*\otimes e_\emptyset^*(\mathcal T\tilde f)  \in \mathcal S^+(Y_\emptyset\times Y_\emptyset)$.

\end{proof}

Finally, let us notice that the coordinates that we have been using on the spaces $Y\times Y\sslash G'$ and $Y_\emptyset\times Y_\emptyset\sslash G'$ are compatible, in the following sense: There is a family $\mathcal Y \to \Ga$, namely, 
$$\mathcal Y=\Mat_2\xrightarrow{\det}\Ga,$$ the space of $2\times 2$ matrices, with general fiber $G'$-equivariantly isomorphic to $Y$, and special fiber (over $0\in \Ga$) equal to $Y_\emptyset^a$. The identification of $Y_\emptyset$ with the open $G'$-orbit in the fiber over $0$ was described in \eqref{ssscatteringgroup}. 
In coordinates, pick the Borel subgroup $B^-\times B\subset G'$, where $B^-=$ lower triangular matrices and $B=$ upper triangular; then $N^-\backslash \SL_2\sslash N$ is identified with $Y_\emptyset\sslash(N^-\times N)$ via the top left entry of a matrix --- this is the isomorphism \eqref{samehoro}.

Now we will see that there is an isomorphism
$$(\mathcal C, \det): \mathcal Y\times_{\Ga}\mathcal Y\sslash G \simeq \Ga\times \Ga,$$
such that the restriction of $\mathcal C$ to $Y\times Y\sslash G'=$ the fiber of $1$ is the trace map:
$$(g_1,g_2)\mapsto \tr(g_1 g_2^{-1}),$$
while its restriction to the fiber of $0$ is our preferred coordinate for $Y_\emptyset\times Y_\emptyset\sslash G'$. 

Indeed, let $w = \begin{pmatrix} & -1 \\ 1\end{pmatrix}$ and take $\mathcal C(g_1,g_2) = \tr(g_1 w g_2^t w^{-1})$. For $g_2\in \SL_2$ we have $g_2^{-1} = wg_2^t w^{-1}$. To check that this coincides with our distinguished coordinate for $Y_\emptyset\times Y_\emptyset\sslash G'=Y^a_\emptyset\times Y^a_\emptyset\sslash G'$, it suffices to observe that the zero matrix is mapped to $0$, and the pair $(g_1,g_2) = (\begin{pmatrix} 1& 0 \\ 0& 0\end{pmatrix}, \begin{pmatrix}0 & 0 \\0 & 1\end{pmatrix})$, which belongs to the distinguished $G'$-orbit on $Y_\emptyset\times Y_\emptyset$, is mapped to $1$.

For $X$ and $X_\emptyset$ the analogous statement is a tautology, since the underlying spaces are the same (and we have been using the same coordinate both for $X$ and its degeneration).

\section{Transfer between the Kuznetsov formula and the relative trace formula for torus periods} \label{sec:Waldspurger}

Now consider the case of $Y=T\backslash G$, where $G=\PGL_2$ and $T\simeq \Gm$ is a split torus. One could also consider a non-split torus, but would need to slightly modify the equivariant Fourier transforms that we defined in \S \ref{sssFourierconv}. We review the local transfer operator $$ \mathcal T: \mathcal S^-_{L(\Std,1)^2}((N,\psi)\backslash G/(N,\psi)) \xrightarrow\sim \mathcal S(T\backslash G/T),$$
constructed in \cite{SaBE1, SaBE2}. As in \S \ref{scattorus}, we will identify $X$ with the space of quadratic forms of discriminant $-\frac{1}{4}$ on a two-dimensional symplectic space $V$.

Recall that the non-standard space $\mathcal S^-_{L(\Std,\frac{1}{2})^2}((N,\psi)\backslash G/(N,\psi)) $ of orbital integrals was defined in \S \ref{ssnonstandard}. In terms of the representatives 
$$ \tilde\xi =\left(\begin{array}{cc}
 & -1 \\
\xi &
\end{array}\right)$$
of regular orbits for the Kuznetsov formula, it consists of measures which, in a neighborhood of $\xi=0$, coincide with the usual test measures $\mathcal S((N,\psi)\backslash G/(N,\psi))$ for the Kuznetsov formula, while in a neighborhood of $\xi =\infty$ they are of the form 
$$ (C_1(\xi^{-1})+C_2(\xi^{-1}) \log|\xi|) d^\times \xi,$$ 
where $C_1$, $C_2$ are smooth functions in a neighborhood of zero.

The role of the character $\Theta_\Pi$, here, will be played by a relative character $I_\pi$ for an irreducible tempered representation of $\PGL_2$, for the quotient space $\Gm\backslash \PGL_2/\Gm$. The definition of the relative character $I_\pi$ is completely analogous to that of the Kuznetsov relative character $J_\pi$: It is given as the composition
$$ \mathcal S(Y\times Y)\to \pi\hat\otimes\tilde\pi\to\CC,$$
where the dual of the map to $\pi\hat\otimes\tilde\pi$ is the morphism
$$ \tilde\pi\otimes\pi\to C^\infty(Y\times Y)$$
that, composed with evaluation at $T1 \times T1$ is given by:
$$\tilde v \otimes v\mapsto \int_{T} \left<\pi(t) v, \tilde v\right> dt.$$ 
Here the integral is convergent (for tempered representations), and no normalization is needed. Moreover, the $L$-packets for the group $\PGL_2$ are singletons (if we do not consider its inner forms, which we should have, in the case of a non-split torus), therefore there is no need to distinguish, notationally, between $\pi$ and its $L$-packet $\Pi$. 
The measure on $T$ is fixed to be the multiplicative Haar measure $d^\times x$ on $F^\times$, after identifying $T\simeq\Gm$ --- there are two inverse ways to perform this identification, and they give rise the same measure.

\begin{theorem}
Consider the equivariant Fourier transform $\mathcal T:=\mathscr F_{\Id,1} \circ \mathscr F_{\Id,1}$ of multiplicative convolution, twice, with $D_{1} = \psi(\bullet) |\bullet| d^\times\bullet$ on measures on $\Gm$.

Then:
\begin{enumerate}
 \item The convolution makes sense on $\mathcal S(N,\psi\backslash G/N,\psi)$ as the Fourier transform of a distribution, and maps it into $\mathcal S(T\backslash G/T)$. 
  \item For every tempered representation $\pi$, \begin{equation} 
        \mathcal T^*I_\pi = J_\pi.
       \end{equation} 
 \item The transform extends to an isomorphism, given by the same convolution understood, again, as the Fourier transform of a(n $L^2$-)distribution:
 \begin{equation}
  \mathcal T: \mathcal S_{L(\Std,\frac{1}{2})^2}^-(N,\psi\backslash G/N,\psi) \xrightarrow\sim \mathcal S(T\backslash G/T).
 \end{equation}
 \item At non-Archimedean places, it satisfies the fundamental lemma for the Hecke algebra, up to a factor of $\zeta(1)^2=(1-q^{-1})^{-2}$, namely: for all $h\in \mathcal H(G,K)$, it takes the element
 $$ h\cdot f_{L(\Std, \frac{1}{2})^2} \in \mathcal S_{L(\Std,\frac{1}{2})^2}^-(N,\psi\backslash G/N,\psi)$$ to the image of $\zeta(1)^2\cdot h$ in $\mathcal S(T\backslash G/T)$.
\end{enumerate}
\end{theorem}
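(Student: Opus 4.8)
The plan is to follow the blueprint of the $\SL_2$ comparison in Theorem~\ref{thmRudnick} and its boundary-degeneration companion Theorem~\ref{groupdegen}, replacing the adjoint quotient by $T\backslash G/T=[\Gm\backslash\PGL_2/\Gm]$ and the single multiplicative Fourier convolution by its square. For Statement~(1) I would argue exactly as in the proof of the first two statements of Theorem~\ref{thmRudnick}: writing $f(\xi)=\Phi(\xi)\,d^\times\xi$, the smoothness of $N\backslash G\twoheadrightarrow\mathfrak C$ (recalled in \S\ref{sstwistedpf}) forces $\Phi$ to be bounded, indeed dominated by a Schwartz measure, so $\mathscr F_{\Id,1}f$ is the product of $d\xi$ with the Fourier transform of the $L^2$-function $\xi\mapsto\Phi(\xi^{-1})$, and a second application of $\mathscr F_{\Id,1}$ is again legitimate, interpreted as the Fourier transform of an $L^2$-distribution. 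That the output lands in $\mathcal S(T\backslash G/T)$ is then checked against the explicit local model $[\mathbbm A^2/\Gm]$ for $[\Gm\backslash\PGL_2/\Gm]$ recalled in Theorem~\ref{density} and Lemma~\ref{coinvariantA2}: one matches the Paley--Wiener description of push-forward measures on $[\mathbbm A^2/\Gm]$ with the image of the Kuznetsov test measures under the double convolution, whose Mellin transform is, by \eqref{FE}, multiplied by a product of two abelian $\gamma$-factors.

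Statement~(2) is the heart of the theorem, and I would prove $\mathcal T^* I_\pi=J_\pi$ by the same explicit Fourier-analytic route as the proof of \eqref{pullbackchar}. First one establishes a formal push-forward identity generalizing \eqref{transfer-trace}, unfolding the torus period $\int_T\langle\pi(t)v,\tilde v\rangle\,dt$ against the $\Gm$-equivariance of the Bessel integral \eqref{IIWhittaker} on the Whittaker side; then one renders it rigorous by interpreting every non-convergent push-forward as a Fourier transform of a distribution and justifying the successive interchanges of integrals, as in \eqref{toprove}. Because $\mathcal T$ is now a \emph{double} convolution, one introduces a two-parameter family of auxiliary sections $u\mapsto F_u\in\mathcal S(T\backslash G/T)$ and uses the continuity of that family near the base point, which itself follows from Statement~(1). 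As an independent, conceptual check --- and as the content of the forthcoming Theorem~\ref{torusdegen} --- one invokes the scattering-theoretic comparison of \S\ref{ssrelchars}: both relative characters descend, via $e_\emptyset^*\otimes e_\emptyset^*$, to the boundary degenerations $X_\emptyset=N\backslash G$ and $Y_\emptyset=N\backslash\PGL_2$, and by Theorem~\ref{thmpullbackfrombd} the operator carrying the relative character attached to $\Gm\backslash\PGL_2$ to the Bessel character is multiplicative Fourier convolution by $\mu_X(\chi)/\mu_Y(\chi)=\gamma(\chi,-\tfrac{\check\alpha}{2},\tfrac12,\psi^{-1})^{-1}\gamma(\chi,-\tfrac{\check\alpha}{2},\tfrac12,\psi)^{-1}$; keeping track of the $\delta^{1/2}$-twist between the measure-theoretic and the normalized pictures, of the Weyl reflection ${}^w\chi=\chi^{-1}$ on $A_\ad$, and of the sign conventions in Tate's functional equation, this is precisely the action of $\mathscr F_{\Id,1}\circ\mathscr F_{\Id,1}$ on Mellin transforms. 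This settles the most continuous part of the spectrum directly; the discrete series of $\PGL_2$ are then recovered because both $\mathcal T^* I_\chi$ and $J_\chi$ extend to meromorphic families of functionals in $\chi$ that agree on the dense locus of unitary principal series.

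For Statement~(3) one needs Paley--Wiener descriptions of both sides. Mellin transform identifies $\mathcal S^-_{L(\Std,\frac12)^2}(N,\psi\backslash G/N,\psi)$ with the space of functions on $\widehat{A_\ad}_\CC$ that are Paley--Wiener away from finitely many poles and are holomorphic (polynomial, in the non-Archimedean case) multiples of $L(\chi,\tfrac{\check\alpha}{2},\tfrac12)^2L(\chi,-\tfrac{\check\alpha}{2},\tfrac12)^2$ --- the asymptotic profile \eqref{expansionPGL2-chi}, with its logarithmic refinement when exponents coincide, being exactly the inverse-Mellin signature of second-order poles along the corresponding divisors --- while $\mathcal S(T\backslash G/T)$ is identified, through the $[\mathbbm A^2/\Gm]$ model and two applications of Proposition~\ref{Tateimage}, with holomorphic multiples of a product of two abelian $L$-factors. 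Multiplication by the square of the $\gamma$-factor governing $\mathscr F_{\Id,1}$ sends one description onto the other, the relevant $\epsilon$-factors being units, and the inverse is realized by the same convolution read with $\psi^{-1}$; this yields the claimed isomorphism. Statement~(4) then follows by unwinding the basic vector: $f_{L(\Std,\frac12)^2}$ is by construction the twisted push-forward of the generating series of $L(\Std,\tfrac12)^2$, so its Mellin transform is an explicit multiple of $L(\chi,\tfrac{\check\alpha}{2},\tfrac12)^2L(\chi,-\tfrac{\check\alpha}{2},\tfrac12)^2$; applying $\mathcal T$ multiplies by the square of the unramified $\gamma$-factor (whose $\epsilon$-part is $1$ and whose $L$-part is a ratio of Euler factors), and a short Satake computation turns this into $\zeta(1)^2$ times the Mellin transform of the basic function of $\mathcal S(T\backslash G/T)$, i.e.\ the push-forward of the unit of $\mathcal H(G,K)$; the case of general $h\in\mathcal H(G,K)$ is then immediate from $\mathcal H(G,K)$-equivariance (Satake acts by multiplication by $\check h$ on both sides). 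As in Theorem~\ref{thmRudnick}, Statement~(4) is largely redundant once Statement~(2) and the $h=1$ case are known, and it reappears as a special case of the $\Sym^2$ analysis of Section~\ref{sec:sym2}; the whole package was carried out globally and locally in \cite{SaBE1, SaBE2}, which I would follow.

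The step I expect to be the genuine obstacle is the rigorous form of Statement~(2): neither the Bessel integral $J_\pi$, nor the transfer operator $\mathcal T$, nor the intermediate push-forward converges absolutely, and pushing the regularized interchange of integrals through the \emph{double} convolution --- in particular controlling the auxiliary family $u\mapsto F_u$ uniformly near the base point --- is where the analysis is delicate. A secondary nuisance is the logarithmic term in $\mathcal S^-_{L(\Std,\frac12)^2}(N,\psi\backslash G/N,\psi)$, which makes the Paley--Wiener bookkeeping in Statement~(3) heavier than in the $\SL_2$ case, since one is matching squares of first-order poles rather than simple poles; but this is bookkeeping, not a conceptual difficulty.
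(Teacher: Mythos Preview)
The paper does not prove this theorem in the text: immediately after the statement it refers the reader to \cite{SaBE1, SaBE2} for the full local theory and to \cite{SaHanoi} for the dictionary between the coordinates used there and the coordinate $\xi$ used here. Your proposal goes considerably further, sketching how one would redo the argument from scratch in parallel with Theorem~\ref{thmRudnick}. That is a reasonable programme --- and you correctly signal at the end that \cite{SaBE1, SaBE2} already carry it out --- but it is not what the paper does here; the paper treats this result as established input.

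One caution about your sketch of Statement~(2): the geometric identity \eqref{transfer-trace} in the $\SL_2$ case came from the concrete map $r:\frac{G}{N}\to\frac{G}{G}$, and the whole rigorous argument was organized around the resulting integral formula \eqref{rshriek}. There is no equally transparent ``push-forward through an intermediate quotient'' that produces $T\backslash G/T$ from the Kuznetsov quotient, so the ``formal push-forward identity generalizing \eqref{transfer-trace}'' you invoke is not something you can read off the $\SL_2$ template; in \cite{SaBE1} the unfolding goes instead through the Rankin--Selberg realization of $\Gm\backslash\PGL_2$ as a space of quadratic forms. Your scattering-theoretic check via Theorem~\ref{thmpullbackfrombd} is on firmer ground and is exactly what the paper exploits in Theorem~\ref{torusdegen}, but note that by itself it only pins down $\mathcal T$ on the most continuous spectrum; extending to all tempered $\pi$ still requires the direct argument. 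The remaining parts of your outline (Paley--Wiener matching for (3), Satake computation for (4)) are in line with how \cite{SaBE1, SaBE2} proceed.
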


For precise references to \cite{SaBE1, SaBE2}, and an explanation of how the formulas there relate to the above transfer operator (given the different coordinates that we are using), I point the reader to \cite{SaHanoi}. Here, I would like to discuss the relation to transfer operators on the asymptotic cone. 

First, let us fix compatible coordinates for $Y\times Y\sslash G$ and for $Y_\emptyset\times Y_\emptyset \sslash G$. Consider the family $\mathcal Y\to \Ga$ whose fiber $\mathcal Y_t$ over $t\in \Ga$ is the space of quadratic forms of discriminant $-\frac{t^2}{4}$ on $V$ (so that all non-zero fibers correspond to split non-degenerate forms). The fiber over $0$ contains the boundary degeneration $Y_\emptyset$, i.e., the space of rank-one quadratic forms, as was explained in \S \ref{scattorus}. As we did there, we identify the quotient $\mathcal Y_t\times V\sslash G\simeq \Ga$ by the evaluation map. 

We can fix an isomorphism 
$$(\mathcal C, t): \mathcal Y\times_{\Ga} \mathcal Y\sslash G \xrightarrow \sim \Ga\times \Ga,$$
where $t$ is the defining morphism to $\Ga$, and $\mathcal C$ is as follows: Consider a quadratic form $q$ on $V$; it defines a morphism $V\to V^\vee$, which combined with the fixed isomorphism $\iota_\omega^{-1}: V^\vee \to V$ induced by the symplectic form, gives rise to an endomorphism
$$ \iota_q: V\to V.$$
Explicitly, $\omega (u,\iota_q(v)) = q(u,v)$ for all $u, v\in V$. We now define $\mathcal C (q_1, q_2) = \tr(\iota_{q_1}\circ \iota_{q_2}) + \frac{t}{2}$.

The reader can check that, on the fiber over $t=1$ ($=Y\times Y\sslash G$), the coordinate $\mathcal C$ is the one that we fixed above, while on the fiber over $t=0$ ($=Y_\emptyset\times Y_\emptyset\sslash G$) the coordinate descends from the map 
$$ V\times V \ni (u,v)\mapsto -\omega(u,v)^2\in \Ga$$
under the isomorphism $Y_\emptyset\simeq V^*/\{\pm 1\}$. Therefore, this is \emph{opposite} to the ``canonical'' isomorphism $Y_\emptyset\times Y_\emptyset\sslash G\to \Ga$ that was discussed in \S \ref{scattorus} --- we will need to take this difference into account. Notice that the choice of $\mathcal C$ seems quite arbitrary, and indeed, it is only justified because this turns out to give, over $t=1$, the coordinate that works for comparison to the Kuznetsov formula, for the representatives of $N\backslash G/N$ cosets that we have chosen. However, we could not have preserved the coordinate at $t=1$ and multiplied the one at $t=0$ by $(-1)$, if we want to have a coordinate that extends over the whole family. 

We fix the coordinate $\xi(v,u)= \omega(v,u)$ for $V \times V\sslash G$, as in \S \ref{ssinvariant}. Then we have the following:

\begin{theorem} \label{torusdegen}
There is a unique $A_X=A_Y$-equivariant operator $\mathcal T_\emptyset$ making the following diagram commute: 
$$ \xymatrix{
\mathcal S(X\times X/G) \ar[rr]^{e_\emptyset^*\otimes e_\emptyset^*}\ar[d]^{\mathcal T} && \mathcal S^+(X_\emptyset\times X_\emptyset/G) \ar[d]^{\mathcal T_\emptyset} \\
\mathcal S(Y\times Y/G) \ar[rr]^{e_\emptyset^*\otimes e_\emptyset^*} && \mathcal S^+(Y_\emptyset\times Y_\emptyset/G) }.
$$

The operator $\mathcal T_\emptyset$ is given by the multiplicative Fourier convolutions $\mathscr F_{\frac{\check\alpha}{2}, 1}\circ \mathscr F_{\frac{\check\alpha}{2}, 1}$ --- again, understood as the Fourier transform of a distribution.
\end{theorem}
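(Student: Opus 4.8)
The plan is to reduce the statement to the same kind of Mellin-transform identity that powered Theorem \ref{groupdegen}, adapted to the torus case. First I would set up notation exactly as in that proof: write $J_\chi$, $I_\chi$ for the Kuznetsov relative character and the $\Gm\backslash\PGL_2/\Gm$ relative character attached to the principal series $\pi_\chi$ of $\PGL_2$ induced from the character $\chi$ of $A_X = A_Y$, and let $M_\chi$ denote the ``open'' Mellin transform $f \mapsto \check f(\chi\delta^{1/2})$ on both $\mathcal S^+(X_\emptyset\times X_\emptyset/G)$ and $\mathcal S^+(Y_\emptyset\times Y_\emptyset/G)$; let $I^X_\chi$, $I^Y_\chi$ be the pullbacks of $M_\chi$ via the asymptotics maps to $\mathcal S(X\times X/G)$, $\mathcal S(Y\times Y/G)$. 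Theorem \ref{thmpullbackfrombd} gives $I^X_\chi\,\mu_X(\chi) = J_\chi\,\mu_G(\chi)$ and $I^Y_\chi\,\mu_Y(\chi) = I_\chi\,\mu_G(\chi)$, with $\mu_X,\mu_Y$ the explicit products of Tate gamma factors listed there for the Whittaker model and for $\Gm\backslash\PGL_2$ respectively. Since $\mathcal T^* I_\chi = J_\chi$ by the preceding theorem (the torus analog of Theorem \ref{thmRudnick}\eqref{two}), for the diagram to commute the operator $\mathcal T_\emptyset$ must satisfy $\mathcal T_\emptyset^* M_\chi = \frac{\mu_X(\chi)}{\mu_Y(\chi)} M_\chi$; plugging in the formulas from Theorem \ref{thmpullbackfrombd} and cancelling the common factor $\gamma(\chi,\check\alpha,0,\psi)$, the ratio $\mu_X(\chi)/\mu_Y(\chi)$ becomes $\gamma(\chi,-\frac{\check\alpha}{2},\frac12,\psi^{-1})^{-1}\gamma(\chi,-\frac{\check\alpha}{2},\frac12,\psi)^{-1} = \gamma(\chi,\frac{\check\alpha}{2},\frac12,\psi^{-1})\gamma(\chi,\frac{\check\alpha}{2},\frac12,\psi)$ after applying the functional equation $\gamma(\cdot,\cdot,1-s,\psi)^{-1} = \gamma(\cdot,\cdot,s,\psi^{-1})\cdot(\text{ratio of }L\text{'s})$ — here I would be careful to get the precise form using \eqref{gammafactor}. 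Comparing with \eqref{FE}, which says $\widecheck{(\mathscr F_{\check\lambda,s}f)}(\chi) = \gamma(\chi,\check\lambda,1-s,\psi)\check f(\chi)$, one sees that a composition $\mathscr F_{\frac{\check\alpha}{2},1}\circ\mathscr F_{\frac{\check\alpha}{2},1}$ multiplies the Mellin transform by exactly $\gamma(\chi,\frac{\check\alpha}{2},0,\psi)^2$, and the discrepancy between $\gamma(\chi,\frac{\check\alpha}{2},0,\psi)^2$ and $\gamma(\chi,\frac{\check\alpha}{2},\frac12,\psi)\gamma(\chi,\frac{\check\alpha}{2},\frac12,\psi^{-1})$ is accounted for by the shift in the coordinate — recall from the paragraph preceding the theorem that our coordinate $\xi$ on $Y_\emptyset\times Y_\emptyset\sslash G$ is \emph{opposite} to the canonical one of \S\ref{scattorus}, and the normalizations $\tilde\chi(z)=|z|^{-1/2}\chi\circ e^{\check\alpha/2}(z)$ absorb the half-integral shift; I would chase these normalizations through explicitly, exactly as in the proof of Theorem \ref{groupdegen}.

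Second, I would make the convolutions rigorous and prove the mapping properties. Following the template of Theorem \ref{groupdegen}, I would write $f(\xi) = \Phi(\xi)\,d^\times\xi$ for $f\in\mathcal S^+(X_\emptyset\times X_\emptyset/G)$, note that $\Phi$ is smooth away from $0$ and of compact support on $F$ (push-forward of a measure in $\mathcal S^+(X_\emptyset\times X_\emptyset)$ along a smooth map), so that $D(\xi):=\Phi(\xi^{-1})\,d\xi$ is a compactly supported distribution, and interpret $\mathscr F_{\frac{\check\alpha}{2},1}$ (hence its square) as Fourier transform of distributions. To justify that \eqref{invMellin} and \eqref{FE} are valid for these distributions — the key technical point — I would invoke Corollary \ref{corWhittakerasymptotics}: acting on $\varphi\in\mathcal S^+(X_\emptyset\times X_\emptyset)$ in both variables by the element of the completed Hecke algebra of $A_X$ with Mellin transform $L(\chi,\check\alpha,1)^{-1}$ (or the appropriate $\frac{\check\alpha}{2}$-variant) lands one in $\mathcal S(X_\emptyset\times X_\emptyset)$, after which one more Hecke action pushes the support off $0$, so the resulting distribution $D''$ is smooth and compactly supported on $F^\times$, the classical theory of Tate zeta integrals applies verbatim, and one transfers the conclusion back to $D$ using equivariance of Fourier convolution and the fact that the relevant $L$-factor products have no pole at the trivial character. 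Since here the convolution is applied \emph{twice}, I would need the analogous regularization argument for two copies of the $\frac{\check\alpha}{2}$-Hecke element, but this is a routine iteration of the one-copy argument; the presence of the logarithmic terms $C_2(\xi^{-1})\log|\xi|$ in the non-standard space at infinity — reflecting the double pole $L(\Std,\frac12)^2$ — corresponds to a double zero/pole in Mellin transform and is handled automatically by the same polynomial-in-$\chi$ bookkeeping.

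Third, I would verify that $\mathcal T_\emptyset f$ actually lands in $\mathcal S^+(Y_\emptyset\times Y_\emptyset/G)$, not merely in some space of distributions. As in Theorem \ref{groupdegen}, this follows from the Mellin inversion formula \eqref{invMellin} once one knows the Mellin transform lies in the image of the asymptotics map from $\mathcal S(Y\times Y/G)$: for $f = (e_\emptyset^*\otimes e_\emptyset^*)\tilde f$ with $\tilde f\in\mathcal S(X\times X/G)$, one has $M_\chi\,\mathcal T_\emptyset f = \frac{\mu_X}{\mu_Y} M_\chi f = \frac{\mu_X}{\mu_Y} I^X_\chi\tilde f = \mu_Y^{-1}\mu_G\, J_\chi\tilde f$, and since $J_\chi\tilde f = I_\chi(\mathcal T\tilde f) = \mu_G^{-1}\mu_Y\, I^Y_\chi(\mathcal T\tilde f) = \mu_G^{-1}\mu_Y\, M_\chi\big((e_\emptyset^*\otimes e_\emptyset^*)(\mathcal T\tilde f)\big)$, one concludes $\mathcal T_\emptyset f = (e_\emptyset^*\otimes e_\emptyset^*)(\mathcal T\tilde f)\in\mathcal S^+(Y_\emptyset\times Y_\emptyset/G)$, using Theorem \ref{density} to pass to push-forward measures. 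Uniqueness of $\mathcal T_\emptyset$ as an $A_X$-equivariant operator is immediate from Mellin inversion once the Mellin multiplier is pinned down.

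The main obstacle, as I see it, is not the Mellin-transform bookkeeping itself but the precise matching of \emph{coordinates and normalizations}: one must correctly track the opposite orientation of the coordinate $\xi$ on $Y_\emptyset\times Y_\emptyset\sslash G$ (the $\omega(u,v)\mapsto -\omega(u,v)^2$ issue flagged just before the theorem), the square map $A\to A_{\mathrm{ad}}$ relating the Cartans of $\SL_2$ and $\PGL_2$, the normalized actions \eqref{action-normalized-functions}–\eqref{action-normalized-measures} versus the unnormalized action on $\C_\emptyset$, the shifts $\tilde\chi(z)=|z|^{-1/2}\chi\circ e^{\check\alpha/2}(z)$, and the two half-integral translations in the $\Gm\backslash\PGL_2$ gamma factors, and confirm that after all of these the multiplier is \emph{exactly} $\gamma(\chi,\frac{\check\alpha}{2},0,\psi)^2$ so that the operator is literally $\mathscr F_{\frac{\check\alpha}{2},1}\circ\mathscr F_{\frac{\check\alpha}{2},1}$ rather than that composition twisted by some innocuous but annoying extra sign or absolute-value character. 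Everything else is a faithful transcription of the proof of Theorem \ref{groupdegen} with $\check\alpha$ replaced by $\frac{\check\alpha}{2}$ and with a double (rather than single) Fourier convolution and a double (rather than simple) pole in the $L$-value.
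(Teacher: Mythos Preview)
Your proposal is correct and follows the paper's approach exactly: the paper's proof simply says ``same as Theorem \ref{groupdegen}'' and then isolates the one new point, which is precisely the coordinate bookkeeping you flag in your last paragraph. To sharpen that point: in the canonical coordinates of \S\ref{scattorus} the multiplier on plain Mellin transforms comes out to $\gamma(\chi,\frac{\check\alpha}{2},0,\psi)\,\gamma(\chi,\frac{\check\alpha}{2},0,\psi^{-1})$ (your half-integral shift $\frac12\to 0$ is exactly the passage from $M_\chi=\check f(\chi\delta^{1/2})$ to $\check f(\chi)$, as in Theorem \ref{groupdegen}), and then the switch to the \emph{opposite} coordinate on $Y_\emptyset\times Y_\emptyset\sslash G$ multiplies by $\chi(-1)$, which --- since $\psi^{-1}(x)=\psi(-x)$ --- converts $\gamma(\chi,\frac{\check\alpha}{2},0,\psi^{-1})$ into $\gamma(\chi,\frac{\check\alpha}{2},0,\psi)$; so the two adjustments you list are separate, each handling one half of the discrepancy, and there is no leftover sign or twist.
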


\begin{proof}
 The proof is the same as for Theorem \ref{groupdegen}. We only need to explain why, for the above choices of coordinates on $X_\emptyset\times X_\emptyset\sslash G$ and $Y_\emptyset\times Y_\emptyset\sslash G$, the operator $\mathcal T_\emptyset$ must act on Mellin transforms as follows:

 $$\widecheck{(\mathcal T_\emptyset f)} (\chi)  = \gamma(\chi,\frac{\check\alpha}{2}, 0,\psi)^2 \check f(\chi).$$
 
 If we were following the arguments of Theorem \ref{groupdegen} using the coordinates for $X_\emptyset\times X_\emptyset\sslash G$ and $Y_\emptyset\times Y_\emptyset\sslash G$ that we used in \S \ref{scattorus}, we would, instead, have the factor 
 $$\gamma(\chi,\frac{\check\alpha}{2}, 0,\psi) \gamma(\chi,\frac{\check\alpha}{2}, 0,\psi^{-1})$$
 instead (originating in Theorem \ref{thmpullbackfrombd}). Now, however, that we are using the \emph{negative} of this coordinate for $Y_\emptyset\times Y_\emptyset\sslash G$ (but not for $X_\emptyset\times X_\emptyset\sslash G$!), we have to mutiply this factor by $\chi(-1)$. This turns the factor $\gamma(\chi,\frac{\check\alpha}{2}, 0,\psi^{-1})$ to $\gamma(\chi,\frac{\check\alpha}{2}, 0,\psi)$.
 
\end{proof}

\bibliographystyle{alphaurl}
\bibliography{biblio}

\end{document}